\documentclass[12pt]{amsart}
\usepackage{amsfonts}
\usepackage{fullpage, amsmath, amsthm,amsfonts,amssymb,stmaryrd, mathrsfs}
\usepackage{hyperref}

\usepackage[pdftex]{color}
\usepackage[all]{xy}

\newtheorem{theorem}[subsection]{Theorem}
\newtheorem{lemma}[subsection]{Lemma}
\newtheorem{corollary}[subsection]{Corollary}
\newtheorem{conjecture}[subsection]{Conjecture}

\newtheorem{proposition}[subsection]{Proposition}
\theoremstyle{definition}

\newtheorem{definition}[subsection]{Definition}
\newtheorem{definition-proposition}[subsection]{Definition-Proposition}
\newtheorem{hypothesis}[subsection]{Hypothesis}
\newtheorem{example}[subsection]{Example}

\newtheorem{remark}[subsection]{Remark}

\newtheorem{notation}[subsection]{Notation}

\numberwithin{equation}{subsection}

\def\calC{\mathcal{C}}

\def\calO{\mathcal{O}}
\def\calP{\mathcal{P}}

\def\calW{\mathcal{W}}

\def\gothm{\mathfrak{m}}

\def\gothU{\mathfrak{U}}

\def\gothX{\mathfrak{X}}

\def\bfB{\mathbf{B}}
\def\AAA{\mathbb{A}}

\def\CC{\mathbb{C}}
\def\FF{\mathbb{F}}
\def\GG{\mathbb{G}}

\def\NN{\mathbb{N}}

\def\QQ{\mathbb{Q}}
\def\RR{\mathbb{R}}

\def\ZZ{\mathbb{Z}}

\def\bfM{\mathbf{M}}

\def\rmM{\mathrm{M}}

\DeclareMathOperator{\Gal}{Gal}

\DeclareMathOperator{\Max}{Max}

\newcommand{\quash}[1]{}  
\newcommand{\an}{\mathrm{an}}

\newcommand{\Iw}{\mathrm{Iw}}
\newcommand{\unr}{\mathrm{unr}}
\newcommand{\ord}{\mathrm{ord}}

\newcommand{\rig}{\mathrm{rig}}

\newcommand{\wt}{\mathrm{wt}}

\newcommand{\inte}{\mathrm{int}}

\newcommand{\lan}{\mathrm{l.an}}
\newcommand{\alg}{\mathrm{alg}}

\newcommand{\LP}{\mathrm{LP}}

\DeclareMathOperator{\GL}{GL}

\DeclareMathOperator{\Char}{Char}
\DeclareMathOperator{\Spc}{Spc}

\DeclareMathOperator{\Ind}{Ind}

\begin{document}

\title{The eigencurve over the boundary of weight space}
\author{Ruochuan Liu}
\address{Ruochuan Liu, Beijing International Center for Mathematical Research, Peking University, 5 Yi He Yuan Road, Beijing, 100871, China. }
\email{liuruochuan@math.pku.edu.cn}
\author{Daqing Wan}
\address{Daqing Wan, Department of Mathematics, University of California, Irvine, 340 Rowland Hall, Department of Mathematics, Irvine, CA 92697.}
\email{dwan@math.uci.edu}
\author{Liang Xiao}
\address{Liang Xiao, Department of Mathematics, University of Connecticut, Storrs, 341 Mansfield Road, Unit 1009, Storrs, CT 06269-1009.}
\email{liang.xiao@uconn.edu}\date{\today}
\dedicatory{In memory of Professor Robert F. Coleman}
\begin{abstract}
We prove that the eigencurve associated to a definite quaternion algebra over $\QQ$ satisfies the following properties, as conjectured by Coleman--Mazur and Buzzard--Kilford: (a) over the boundary annuli of weight space, the eigencurve is a disjoint union of (countably) infinitely many connected components each finite and flat over the weight annuli, (b) the $U_p$-slopes of points on each fixed connected component are proportional to the $p$-adic valuations of the parameter on weight space, and (c) the sequence of the slope ratios form a union of finitely many arithmetic progressions with the same common difference.
In particular, as a point moves towards the boundary on an irreducible connected component of the eigencurve, the slope converges to zero.
\end{abstract}
\thanks{
R.L. is partially supported by NSFC-11571017 and the Recruitment Program of Global Experts of China.
D.W. was partially supported by Simons Fellowship.
L.X. is  partially supported by Simons Collaboration Grant \#278433 and NSF grant DMS-1502147.}
\subjclass[2010]{11F33 (primary), 11F85 11S05 (secondary).}
\keywords{Eigencurves, slope of $U_p$-operators, Newton polygon, overconvergent modular forms, completed cohomology, weight space}
\maketitle

\setcounter{tocdepth}{1}
\tableofcontents

\section{Introduction}

\subsection{Coleman--Mazur--Buzzard--Kilford Conjecture}
\emph{Eigencurves} were introduced in the groundbreaking work of R. Coleman and B. Mazur \cite{coleman-mazur} to study the $p$-adic variation of modular forms.
Roughly speaking, they are rigid analytic curves that parameterize finite slope overconvergent normalized $p$-adic eigenforms, where the $q$-expansions of these overconvergent modular forms vary $p$-adically continuously.
The study of the eigencurves has led to great success, for example in M. Kisin's proof of the Fontaine--Mazur Conjecture for overconvergent modular forms \cite{kisin}.
While the arithmetic properties and the local geometry of the eigencurves were extensively studied in the literature (see e.g. \cite{bellaiche} for a summary), their global geometry seems to be a very intriguing and difficult topic.
Only recently, in joint work of H. Diao with the first author \cite{diao-liu}, they proved the ``properness" \footnote{Actually the "properness" here is not in the sense of rigid analytic geometry; we refer the reader to \cite{diao-liu} for its precise definition.} of the eigencurves over weight space.
In this paper, we focus on another interesting geometric property of eigencurves, namely, their behavior over the boundary annuli of weight space.

Let us be more precise.
Let $p$ be a prime number. Set $q=p$ if $p$ odd, and $q=4$ if $p=2$. Let $\varphi(q)$ denote the Euler function, namely, $\varphi(q) = p-1$ if $p$ is odd and $\varphi(q) = 2$ if $p=2$.
\quash{(excluding the case $p=2$ throughout the paper for a simple presentation).}
We use $v(\cdot)$ and $|\cdot|$ to denote the $p$-adic valuation and the $p$-adic norm, respectively, normalized so that $v(p) = 1$ and $|p| = p^{-1}$. In particular, $v(q) = 1$ if $q =p$, and $v(q) =2$ if $p=2$.
\emph{Weight space} $\calW$ is
the rigid analytic space associated to the Iwasawa algebra $\Lambda = \ZZ_p\llbracket \ZZ_p^\times \rrbracket$, which is the union of $\varphi(q)$ open unit disks indexed by the characters of the torsion subgroup $\Delta$ of $\ZZ_p^\times$.  Each closed point on weight space corresponds to a continuous ($p$-adic) character $\chi$ of $\ZZ_p^\times$.   We take the parameter on the weight disks to be $T:=T_{\chi}: = \chi(\exp(q))-1$.
For $r \in (0,1)$, we use $\calW^{> r}$ to denote the (union of) annuli where $|T| > r$; it is referred to as the ``\emph{halo}" of weight space by Coleman.

We fix a tame level and let $\calC$ denote the corresponding eigencurve, as constructed in K. Buzzard's paper \cite{buzzard} (which generalizes \cite{coleman-mazur}).
Each point of the eigencurve corresponds to a finite slope normalized overconvergent eigenform $f = \sum_{n \geq 0} a_n(f)q^n$.\footnote{This is the only time in this paper $q$ stands for $e^{2\pi iz}$. We will not mention $q$-expansions again.}
This eigencurve admits a map $\wt$ to the weight space, known as the \emph{weight map}, and a map $a_p$ to $\GG_m^\rig$, known as the \emph{slope map}.
\begin{equation}
\label{E:ap and wt}
\xymatrix{
\calC \ar[r]^-{a_p} \ar[d]^\wt & \GG_m^\rig \\
\calW
}
\end{equation}
For example, if we use $z_f$ to denote the point on $\calC$ corresponding to a classical normalized eigenform $f$ of weight $k+2 \geq 2$ and Nebentypus $p$-character $\chi
: (\ZZ/p^m\ZZ)^\times \to \CC_p^\times$, then the image of $z_f$ under the map $a_p$ is the $p$-th Fourier coefficient $a_p(f)$ of $f$, and the image of $z_f$ under the map $\wt$ is the point on $\calW$ corresponding to the character $x^k\chi: \ZZ_p^\times \to \CC_p^\times$ that sends $u$ to $u^k \chi(u)$.
In particular, the value of the parameter $T$ at this point is $T_{x^k \chi} =\exp(kq) \cdot \chi(\exp(kq)) -1 $.

For $r \in (0,1)$, we use $\calC^{> r}$ to denote the preimage $\wt^{-1}(\calW^{> r})$.
The following is a folklore conjecture suggested by a computation of  Buzzard and L. Kilford \cite{buzzard-kilford} which addresses a question asked by Coleman and Mazur \cite{coleman-mazur}.

\begin{conjecture}[Coleman--Mazur--Buzzard--Kilford]
\label{Conj:Buzzard-Kilford}
When $r \in (0,1)$ is  sufficiently close to $1^-$, the following statements hold.
\begin{enumerate}
\item
The space $\calC^{> r}$ is a disjoint union of (countably infinitely many) connected components $Z_1, Z_2, \dots$ such that the weight map $\wt: Z_n \to \calW^{> r}$ is finite and flat for each $n$.
\item
There exist nonnegative rational numbers $\alpha_1, \alpha_2, \ldots \in \QQ$ in non-decreasing order and tending to infinity such that, for each $n$ and each point $z \in Z_n$, we have
\[
|a_p(z)| = |T_{\mathrm{wt}(z)}|^{\alpha_n}.
\]
\item
The sequence $\alpha_1, \alpha_2, \dots$ is a disjoint union of finitely many arithmetic progressions, counted with multiplicity (at least when the indices are large enough).
\end{enumerate}

\end{conjecture}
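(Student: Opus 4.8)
The plan is to deduce all three statements, together with the displayed consequence, from a single sharp description of the $T$-adic Newton polygon of the characteristic power series of $U_p$ valid uniformly over the boundary annulus. First I would use strong approximation for the definite quaternion algebra $D/\QQ$ to realise the space of overconvergent automorphic forms of the universal weight $\kappa\colon \Zp^\times \to \Lambda^\times$ as a finite direct sum of copies of a space $\calA_\kappa$ of $r$-analytic functions on $\Zp$ (or on $\Zp^\times$, working at Iwahori level at $p$), carrying the weight-$\kappa$ action of the relevant monoid. On this orthonormalizable Banach $\Lambda$-module the Hecke operator $U_p$ is completely continuous, so --- after fixing an integral model --- its characteristic power series $P(w,T)=\det(1-w\,U_p)=1+\sum_{n\ge 1}c_n(T)\,w^n$ lies in $\Zp\llbracket T\rrbracket\llbracket w\rrbracket$, and the eigencurve $\calC$ is the vanishing locus of $P$ inside $\calW\times\GG_m^\rig$. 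Thus statements (1)--(3) translate into assertions about how the $w$-slopes of $P$ depend on $v(T)$ over $\calW^{>r}$.

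The second step is to make the $U_p$-matrix explicit. I would pick an orthonormal basis $\{e_j\}_{j\ge 0}$ of the integral model of $\calA_\kappa$ adapted simultaneously to the analyticity radius and to the integral structure (in Mahler-type coordinates, roughly $e_j = p^{\lceil cj\rceil}z^j$ for the appropriate constant $c$), and compute the entries $a_{i,j}$ of $U_p$. The operator factors as a finite sum over $\ZZ/p$ of a contraction $z\mapsto (z+a)/p$ followed by multiplication by a value $\kappa(\text{unit})$; substituting $\kappa(u)=(1+T)^{\log u/\log(\exp q)}$ and expanding in $T$ is precisely the mechanism producing the halo behaviour. The contraction forces $v(a_{i,j})$ to grow linearly in $j$, which on $\calW^{>r}$ becomes a lower bound $v_T(a_{i,j})\ge h_j$ with $h_j$ an explicit non-decreasing function independent of $i$; combining this with the contribution of the weight twist gives $v_T(c_n)\ge H(n):=h_1+\cdots+h_n$, i.e. the $T$-adic Newton polygon $\NP_T(P)$ lies on or above the Hodge-type polygon $\HP$ through the points $(n,H(n))$.

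The heart of the matter --- and the step I expect to be the main obstacle --- is to show that this bound is sharp, i.e. $\NP_T(P)=\HP$. The plan is to prove that, after scaling rows and columns by the appropriate powers of $T$, the reduction of the $U_p$-matrix modulo the next power of $T$ becomes (block-)triangular, so that each $c_n$ has a $T$-adic unit as leading coefficient; concretely this comes down to checking the non-vanishing of an explicit matrix (or of its principal minors) over $\FF_p((T))$ built from binomial coefficients $\binom{z}{j}$ modulo $p$, which can be attacked by Lucas-type congruences, or alternatively by specialising $T$ to a judiciously chosen weight where the dimensions of the corresponding classical spaces are known and invoking a dimension count. Granting this, for any point $z\in\calC$ with $v(T_{\wt(z)})=s$ sufficiently small the $w$-Newton polygon of the fibre $P(w,T_{\wt(z)})$ is $s$ times $\NP_T(P)$, whose successive slopes $0\le\alpha_1\le\alpha_2\le\cdots$ are the slopes of $\HP$; hence $|a_p(z)|=|T_{\wt(z)}|^{\alpha_n}$, proving (2), and since a genuine break $\alpha_n<\alpha_{n+1}$ of $\HP$ persists over the whole annulus, the factorisation of $P$ over $\calW^{>r}$ at each corner of $\HP$ (a Weierstrass-type factorisation valid over the annulus) produces finite flat covers and realises $\calC^{>r}$ as the disjoint union of the graded pieces $Z_1,Z_2,\dots$ of this slope filtration, proving (1) modulo the connectedness of each $Z_n$, which is a further check; the displayed corollary $v(a_p(z))=\alpha_n s\to 0$ towards the boundary is then immediate.

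Finally, for (3) I would study the integer sequence $(h_j)_j$, equivalently the slope sequence $(\alpha_n)_n$: because $h_j$ is the ceiling of a piecewise-linear function of $j$ (coming from the contraction exponent, perturbed in a way of bounded complexity by the $T$-expansion of $\kappa$ and by the finitely many quaternionic components), its sequence of successive differences is eventually periodic up to a single additive shift, so $(\alpha_n)$ is, for $n$ large, a finite union of arithmetic progressions with one common difference --- a finite combinatorial verification. To summarise the difficulty: the lower bound of the second step is delicate but essentially a bookkeeping of $p$-divisibilities, whereas pinning down $\NP_T(P)$ exactly in the third step requires genuine control of the $U_p$-matrix modulo small powers of $T$ and the special triangularity phenomenon there; once that is secured, the reduction to a power series, the passage from $v_T$ to $v$, and the arithmetic-progression bookkeeping are comparatively formal.
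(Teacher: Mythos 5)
Your proposal correctly identifies the broad strategy that the paper does use for its \emph{quaternionic analogue} of this conjecture (the conjecture as stated, for modular eigencurves of arbitrary tame level, is \emph{not} proved in the paper, and remains open; the paper's results transfer via $p$-adic Jacquet--Langlands only to components whose tame part contains no principal series): write the space of integral $p$-adic automorphic forms explicitly as a direct sum of $t$ copies of a function space, compute $U_p$ in a suitable orthonormal basis to get a lower bound on the $T$-adic Newton polygon of the characteristic series, and then verify sharpness at special weights. You also correctly flag the ``pinning-down'' step as the real obstacle. However, there are several substantive gaps.

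First, the assertion $\NP_T(P)=\HP$ --- that the lower-bound (``Hodge'') polygon is \emph{everywhere} sharp, with every $c_n$ having a $T$-adic unit leading coefficient $b_{n,\lambda(n)}$ --- is too strong and in general false. The paper's Corollary~\ref{C:lower bound} makes clear that $b_{n,\lambda(n)}$ need not be a unit; what is actually shown (Step~I of \S\ref{SS:Buzzard-Kilford D}) is that the two polygons \emph{touch} only at the periodically spaced points $\calP_k=(n_{k+1},\lambda(n_{k+1})v(T))$ with $n_k=kqt$. Between consecutive touching points the actual Newton polygon can sit strictly above the lower bound. That is exactly why Theorem~\ref{T:main theorem} on $\calW^{>1/p}$ only confines slope ratios to \emph{intervals} $I$, and why a strictly smaller annulus $\calW^{>\lambda}$ is needed in Theorem~\ref{T:main theorem 2} to get the equality of ratios. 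Your primary route --- block-triangularity of the scaled matrix mod powers of $T$, verified by Lucas-type congruences --- would entail sharpness at every $n$, which does not hold; the paper contains no such argument.

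Second, the ``alternative'' route you mention (specialising to a judicious classical weight and counting dimensions) is in fact the route the paper takes, but a dimension count alone is not enough. It locates the $x$-coordinate $n_{k+1}=\dim S^D_{k+2}(K^p\Iw_{q^2};\psi)$ of a potential break, but it does not give the $y$-coordinate. The crucial extra ingredient is Atkin--Lehner theory (Proposition~\ref{P:atkin-lehner}): pairing $\psi$ with $\psi^{-1}$ forces the total sum of $U_p$-slopes on $S^D_{k+2}(\psi)\oplus S^D_{k+2}(\psi^{-1})$ to be $(k+1)^2qt$, and combined with classicality this pins the $y$-coordinate to exactly $\lambda(n_{k+1})v(T_{\chi_k})$ --- the paper explicitly calls this agreement ``lucky.'' Your write-up omits the sum-of-slopes argument entirely, and without it the sharpness at $\calP_k$ cannot be concluded.

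Third, the basis $e_j=p^{\lceil cj\rceil}z^j$ you suggest is not the one that works. The paper's integrality estimate $P_{m,n}\in\gothm_\Lambda^{\max\{\lfloor m/t\rfloor-\lfloor n/pt\rfloor,\,0\}}$ (Theorem~\ref{T:HP lower bound}) is obtained from the Mahler basis $\binom{z}{n}$ together with a carefully developed notion of ``tilted degree'' (Lemmas~\ref{L:tilted degree estimate I} and~\ref{L:tilted degree estimate II}); the monomial basis does not give the needed $\lfloor n/p\rfloor$ gain, and without it the lower bound polygon would be too weak to touch the classical points. Finally, the arithmetic-progression statement (3) is not a purely combinatorial property of a sequence $(h_j)$: in the paper it is deduced from the slope-ratio \emph{equality} of Theorem~\ref{T:main theorem 2} (itself requiring the shrunken annulus and a separate comparison of lower and upper bound polygons, Lemma~\ref{L:vertical difference}) together with a second application of Atkin--Lehner to obtain the twisted periodicity \eqref{E:finer twisted periodicity}.
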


When the tame level is trivial and $p=2$, this conjecture was verified using an explicit computation by Buzzard and Kilford \cite{buzzard-kilford}, extending the thesis work of M. Emerton \cite{emerton}.
More explicit computations for small $p$ and small tame levels have appeared in \cite{jacobs, kilford, kilford-mcmurdy,  roe}.
A partial result that is independent of the prime $p$ and the tame level was proved by J. Zhang and the second and third authors \cite{wan-xiao-zhang}.\\

The goal of this paper is to prove an analog of Conjecture~\ref{Conj:Buzzard-Kilford} for overconvergent automorphic forms for definite quaternion algebras over $\QQ$.

We fix some notations first. Let $D$ be a definite quaternion algebra over $\QQ$ which splits at $p$. Fix a tame level structure; we say the tame level is \emph{neat} if it satisfies the condition (Neat) in Subsection~\ref{SS:Buzzard's construction}.
\quash{(satisfying the condition (Neat) in Subsection~\ref{SS:Buzzard's construction}).}Let $\Spc_D$ denote the corresponding \emph{spectral curve} associated to the overconvergent automorphic forms for $D^\times$ constructed by Buzzard in \cite{buzzard}, which admits maps $\wt$ and $a_p$ similar to the eigencurve as in \eqref{E:ap and wt}. For $r\in(0,1)$, we denote by $\Spc_D^{>r}$ the preimage $\wt^{-1}(\calW^{>r})$. For each character $\omega:\Delta\rightarrow\ZZ_p^\times$, we denote by $\calW_{\omega}$ and $\calW_\omega^{>r}$ the weight disk and annulus corresponding to $\omega$, and by $\Spc_{D,\omega}$ and $\Spc_{D,\omega}^{>r}$ the preimages $\wt^{-1}(\calW_{\omega})$ and $\wt^{-1}(\calW^{>r}_{\omega})$ respectively.
Let $\omega_0: \Delta \to \ZZ_p^\times$ denote the inclusion map. 

The first main result is the following theorem, in which 
\begin{itemize}
\item
the constant $t$ is equal to the dimension of the space of weight $2$ automorphic forms with $q$-Iwahori level structure at $p$ and tame level as above, and
\item
$r_{\ord}(\omega)$ denotes the dimension of the ordinary subspace of automorphic forms of weight 2 and character $\omega$.
\end{itemize}

\begin{theorem}
\label{T:main theorem}
Let $\omega: \Delta\to \ZZ_p^\times$ be a character. Then the space $\Spc_D^{>1/p}$ is a disjoint union 
\[
\Spc_D^{>1/p} = X_0 \coprod X_{(0,1)} \coprod X_1 \coprod X_{(1,2)} \coprod X_2 \coprod \cdots
\]
of (possibly empty) rigid analytic spaces which are finite and flat over $\calW^{>1/p}$ via $\wt$,
such that, for each point $x \in X_I$ with $I$ denoting the interval $n=[n, n]$ or $(n,n+1)$, we have
\[
v(a_p(x)) \in \varphi(q)v(T_{\wt(x)})\cdot I.
\]
In particular, as $x$ varies on each irreducible component of $\Spc_D$ with $\wt(x)$ approaching the boundary of weight space, i.e. 
$|T_{\wt(x)}| \to 1^-$, the slope $v(a_p(x)) \to 0$.

Moreover, if the tame level is neat, and we denote by $X_{n,\omega}$ and $X_{(n,n+1),\omega}$ the preimages of $\calW_{\omega}^{>1/p}$
in $X_n$ and $X_{(n,n+1)}$ respectively, then
\[
\deg X_{n,\omega} = \left\{
\begin{array}{ll}
r_{\ord}(\omega),&\textrm{if } n=0,\\
r_{\ord}(\omega^{-1}\omega_0^{2n-2})+r_{\ord}(\omega\omega_0^{-2n}),& \textrm{if }n\geq1,\\
\end{array}
\right.
\]
and
\[
\deg X_{(n,n+1),\omega} =qt-r_{\ord}(\omega^{-1}\omega_0^{2n})-r_{\ord}(\omega\omega_0^{-2n}).
\]
 for all $n\geq0$. In particular, we have $\deg X_{(n,n+1),\omega}>0$ for all $n\geq0$.
\end{theorem}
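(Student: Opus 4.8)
Granting the main technical input of this paper — that over the boundary annulus the Newton polygon of the characteristic power series $C_\omega(w)=\det(1-wU_p)$ of $U_p$ acting on the space of overconvergent automorphic forms of the given tame level and $\Delta$-type $\omega$ coincides with an explicit polygon, call it the ghost polygon $\mathrm{Gh}_\omega$, whose slopes form a union of finitely many arithmetic progressions — both the disjoint decomposition and the degree formulas become combinatorics of $\mathrm{Gh}_\omega$. Indeed, a fixed Newton polygon forces $C_\omega(w)$ to factor, over $\calO(\calW_\omega^{>1/p})$, according to slope ranges; cutting at the slopes $\varphi(q)\ZZ$ yields the pieces $X_{n,\omega}$ (slope exactly $\varphi(q)n$, in units of $v(T)$) and $X_{(n,n+1),\omega}$ (slopes strictly between $\varphi(q)n$ and $\varphi(q)(n+1)$), each finite flat over the \emph{connected} base $\calW_\omega^{>1/p}$ of degree equal to the corresponding horizontal length of $\mathrm{Gh}_\omega$. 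So everything reduces to locating the vertices of $\mathrm{Gh}_\omega$ that sit at slopes in $\varphi(q)\ZZ$ and recording the horizontal distances between consecutive such vertices.

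First I would compute $\deg X_{n,\omega}$, the multiplicity of the slope $\varphi(q)n$. The classical weights $x^{k-2}\chi\omega$ with $\chi$ of large conductor are dense near the boundary of $\calW_\omega$, and a point of $X_{n,\omega}$ above such a weight is a $U_p$-eigenform of slope $\varphi(q)n\cdot v(T)$, which becomes arbitrarily small; once it drops below the classicality bound, the control theorem for overconvergent automorphic forms together with Hida theory for the ordinary part identify the count with a dimension of classical \emph{ordinary} weight-$2$ forms, after transport through the weight-$k$-to-weight-$2$ comparison (which twists the nebentypus by the power of $\omega_0$ recording $k\bmod\varphi(q)$) and through Jacquet--Langlands. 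For $n=0$ this is directly the ordinary space of weight $2$ and character $\omega$, i.e.\ $r_{\ord}(\omega)$. For $n\geq1$ one obtains the two summands $r_{\ord}(\omega^{-1}\omega_0^{2n-2})$ and $r_{\ord}(\omega\omega_0^{-2n})$: these come from the two branches of the spectral curve through a crystalline point — the two roots of the Hecke polynomial $X^2-a_pX+\langle p\rangle p^{k-1}$ — whose nebentypus characters are interchanged up to a Teichmüller twist by the Atkin--Lehner involution, and the exponents $2n-2$ and $-2n$ record the two families of classical weights ($k\equiv2n$ and $k\equiv2-2n\bmod\varphi(q)$) bordering the slope-$\varphi(q)n$ segment.

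Next I would locate the integer-slope vertices of $\mathrm{Gh}_\omega$ themselves, showing their abscissae are governed by formulas of the shape ``$c+n\,qt-r_{\ord}(\text{a character determined by }n)$'', so that differencing consecutive vertices produces exactly the two displayed formulas — the apparent mismatch $2n$ versus $2n-2$ in the two $r_{\ord}$'s being precisely what the two distinct weight families at the two ends of the segment contribute. The period constant $qt$ is extracted from Buzzard's explicit model of overconvergent automorphic forms at level $q$-Iwahori: there the $U_p$-operator is, up to a unit twist, a weighted-shift-type operator on a Mahler-type basis of the local component at $p$ tensored with the finite-dimensional space of functions on the relevant double-coset set, and a Hodge-polygon computation for its characteristic series exhibits a horizontal period equal to $qt$ over a full block.

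Positivity of $\deg X_{(n,n+1),\omega}$ is then immediate from the formula: each $r_{\ord}(\eta)$ is bounded by the dimension of the full weight-$2$, character-$\eta$ subspace, hence by $t$, so $\deg X_{(n,n+1),\omega}\geq qt-2t=(q-2)t$, which is $\geq1$ because $q\geq3$ (since $q=p$ for odd $p$ and $q=4$ for $p=2$) and $t\geq1$ (the constant functions are weight-$2$ automorphic forms). The main obstacle is the previous step: pinning the vertices of $\mathrm{Gh}_\omega$ to the $r_{\ord}$'s requires making the ghost polygon genuinely explicit, not merely ``a union of arithmetic progressions'', and keeping track of several normalizations at once — the quaternionic-versus-modular dictionary through Jacquet--Langlands, the weight-$k$-to-weight-$2$ nebentypus twist by $\omega_0^{k-2}$, the two $p$-stabilizations with their Atkin--Lehner-induced inversion of characters, and (for $p=2$) the doubling $\varphi(q)=v(q)=2$; getting the parities of the Teichmüller exponents right is exactly where that care is needed.
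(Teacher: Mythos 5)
Your overall shape — rigid Newton polygon over the annulus forces a factorization; the degrees are located by matching slope-$\varphi(q)n$ multiplicities with dimensions of ordinary weight-$2$ spaces; the Teichm\"uller exponents come from weight shifts and Atkin--Lehner; positivity from $r_{\ord}(\eta)\leq t$ — is correct, but the two steps you wave at as ``the main technical input'' and ``locating the integer-slope vertices'' are not identified correctly, and the second one as you sketch it would not go through.

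First, granting ``the Newton polygon coincides with an explicit ghost polygon whose slopes form a union of finitely many arithmetic progressions'' over the full annulus $\calW_\omega^{>1/p}$ is granting too much: that statement is the content of Theorem~\ref{T:main theorem 2}, which the paper only proves on a \emph{smaller} annulus $\calW^{>\lambda}$ (with $\lambda$ depending on $t$), and which is proved \emph{after} and \emph{using} Theorem~\ref{T:main theorem}. What Theorem~\ref{T:main theorem} actually uses is weaker and subtler: the lower-bound polygon from Theorem~\ref{T:HP lower bound}, together with the observation (Step I of the proof) that the true Newton polygon \emph{touches} this lower bound exactly at the abscissae $n_{k+1}=(k+1)qt$. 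That touching is proved by evaluating at the classical weights $(k,\psi)$ of conductor $q^2$ and invoking Proposition~\ref{P:atkin-lehner}: the total slope on $S^D_{k+2}(\psi)\oplus S^D_{k+2}(\psi^{-1})$ equals $(k+1)^2qt$, which coincides with twice the value $\lambda(n_{k+1})v(T_{\chi_k})=(k+1)^2qt/2$ of the lower bound. Nothing about arithmetic progressions enters at this stage, and indeed one does not yet know the positions of the vertices strictly between $n_k$ and $n_{k+1}$.

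Second, your mechanism for the degree formula is wrong. You attribute the two summands $r_{\ord}(\omega^{-1}\omega_0^{2n-2})$ and $r_{\ord}(\omega\omega_0^{-2n})$ to ``the two branches through a crystalline point — the two roots of the Hecke polynomial $X^2-a_pX+\langle p\rangle p^{k-1}$''. But the weights used in the proof have \emph{primitive} Nebentypus $\psi$ of conductor $q^2$, so the local representation at $p$ is a ramified principal series with a one-dimensional space of $\Iw_{p^2}$-fixed vectors — there are no two $p$-stabilizations of an unramified form here. The correct Atkin--Lehner statement (Proposition~\ref{P:atkin-lehner}) pairs $S^D_{k+2}(\psi)$ with $S^D_{k+2}(\psi^{-1})$ and sends a slope $\alpha$ to $k+1-\alpha$. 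This, plus classicality, gives $n_{k+1}-n_{k+1}^-$ as the dimension of the slope-$(k+1)$ part of $S^D_{k+2}(\psi)$, equal to the ordinary part of $S^{D,\dagger}_{(k,\psi^{-1})}$, hence $r_\ord(\omega^{-1}\omega_0^{2k})$ by Hida. The other summand $n_{k+1}^+-n_{k+1}$ has a genuinely different source which your proposal omits entirely: the BGG/theta exact sequence
\[
0 \to S^D_{k+2}(K^p\Iw_{q^2}, \psi) \to S^{D, \dagger}_{(k,\psi)} \xrightarrow{\ (d/dz)^{k+1}\ } S^{D, \dagger}_{(-k-2, \psi)} \to 0,
\]
which shifts $U_p$-slopes by $k+1$ and reduces the count to the ordinary part of $S^{D,\dagger}_{(-k-2,\psi)}$, namely $r_\ord(\omega\omega_0^{-2k-2})$. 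Without this exact sequence you cannot pin the right endpoint $n_{k+1}^+$ of the horizontal segment, so as written your argument has a gap precisely where you flag ``this is where care is needed''.
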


\quash{\begin{remark}
When $p=2$ and the tame level is neat, we expect the optimal $r$ in the above theorem to be $\frac{1}{4}$. See also Remark \ref{R:remark of main theorem}.
\end{remark}}
This theorem will be proved in Subsection~\ref{SS:Buzzard-Kilford D}.

\begin{corollary}
\label{C:periodicity}
If the tame level is neat, then for $I=(0,1), 1, (1,2), 2,\dots$, we have 
\[
\deg X_{I,\omega}=\deg X_{ I+1,\omega\omega_0^2}.
\]  
In particular, the degree $\deg X_{I,\omega}$ is periodic modulo $\frac{\varphi(q)}{2}$.
\end{corollary}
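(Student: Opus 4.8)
The plan is to read the corollary off directly from the explicit degree formulas of Theorem~\ref{T:main theorem}; no further geometry enters, only a bookkeeping of how those formulas behave under the substitution $(\omega,n)\mapsto(\omega\omega_0^2,\,n+1)$.

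First I would establish the displayed identity $\deg X_{I,\omega}=\deg X_{I+1,\omega\omega_0^2}$ case by case. For a half-open interval $I=(n,n+1)$ with $n\geq0$, Theorem~\ref{T:main theorem} gives
\[
\deg X_{(n,n+1),\omega}=qt-r_{\ord}(\omega^{-1}\omega_0^{2n})-r_{\ord}(\omega\omega_0^{-2n}),
\]
and since $(\omega\omega_0^2)^{-1}\omega_0^{2(n+1)}=\omega^{-1}\omega_0^{2n}$ and $(\omega\omega_0^2)\omega_0^{-2(n+1)}=\omega\omega_0^{-2n}$, the same formula applied to $(I+1,\omega\omega_0^2)=\bigl((n+1,n+2),\omega\omega_0^2\bigr)$ produces the identical value; hence $\deg X_{(n+1,n+2),\omega\omega_0^2}=\deg X_{(n,n+1),\omega}$. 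For an integer interval $I=n$ with $n\geq1$, Theorem~\ref{T:main theorem} gives $\deg X_{n,\omega}=r_{\ord}(\omega^{-1}\omega_0^{2n-2})+r_{\ord}(\omega\omega_0^{-2n})$, and the same substitution now uses $(\omega\omega_0^2)^{-1}\omega_0^{2(n+1)-2}=\omega^{-1}\omega_0^{2n-2}$ together with $(\omega\omega_0^2)\omega_0^{-2(n+1)}=\omega\omega_0^{-2n}$ to give $\deg X_{n+1,\omega\omega_0^2}=\deg X_{n,\omega}$. This covers every $I$ in the list $(0,1),1,(1,2),2,\dots$.

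For the periodicity statement I would iterate the identity. Since $\omega_0\colon\Delta\to\ZZ_p^\times$ is the inclusion and $\Delta$ is cyclic of order $\varphi(q)$, the character $\omega_0^2$ has order $\varphi(q)/2$ (note $\varphi(q)$ is even, being $p-1$ for odd $p$ and $2$ for $p=2$). Applying the identity $\tfrac{\varphi(q)}{2}$ times yields $\deg X_{I,\omega}=\deg X_{I+\varphi(q)/2,\;\omega\omega_0^{\varphi(q)}}=\deg X_{I+\varphi(q)/2,\;\omega}$, so the sequence of degrees --- indexed by the position of $I$ in the list, equivalently by the integer $n$ --- is periodic with period $\varphi(q)/2$.

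I do not expect a genuine obstacle; the only point needing a moment's care is verifying that $I+1$ and all its further shifts stay within the range where the relevant formula of Theorem~\ref{T:main theorem} is stated --- the integer case requires $n\geq1$ and the open-interval case requires $n\geq0$, both preserved by $I\mapsto I+1$ once $I\geq(0,1)$ --- and, correspondingly, that $I=0$ is legitimately omitted from the statement, since $\deg X_{0,\omega}=r_{\ord}(\omega)$ obeys a different formula.
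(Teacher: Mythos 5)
Your proof is correct and takes the only sensible route: read the identity off from the explicit degree formulas in Theorem~\ref{T:main theorem} by substituting $(\omega,n)\mapsto(\omega\omega_0^2,n+1)$, then iterate $\varphi(q)/2$ times using $\omega_0^{\varphi(q)}=1$ to get the periodicity. The paper gives no separate proof for this corollary---it is indeed just this bookkeeping---so you have matched its intent exactly.
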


In order to prove the full version of Conjecture \ref{Conj:Buzzard-Kilford} for $\Spc_D$, our current technique requires to weaken the radius bound on $|T|$.
The following theorem will be proved in Subsection~\ref{SS:proof of main theorem 2}.

 \begin{theorem}
\label{T:main theorem 2}
Let $\omega: \Delta\to \ZZ_p^\times$ be a character. Then there exists $\lambda\in(0,1)$ such that  
there exists a sequence of rational numbers $\alpha_0(\omega), \alpha_1(\omega), \dots$ in increasing order and tending to infinity such that
$\Spc_{D, \omega}^{> \lambda}$ is a disjoint union $\coprod_{i \geq 0} Y_{i,\omega}$ of rigid analytic spaces finite and flat over $\calW^{> \lambda}_\omega$ via $\wt$, such that
\begin{equation}
\label{E:slope ratio equality}
v(a_p(y)) = \varphi(q)v(T_{\wt(y)})\alpha_i(\omega)
\end{equation}
for every $y\in Y_{i,\omega}$. More precisely, if the tame level is neat, then we can take $\lambda=p^{-\frac{8}{(p^2-1)t+8}}$ for $p>2$, and $\lambda=2^{-\frac{1}{t+1}}$ for $p=2$.

Moreover, let $M$ be a positive integer so that $p^{-q/p^{M-1}(p-1)} >\lambda$; we also require $M \geq 2$ if $p$ is odd and $M\geq4$ if $p=2$.
Then if we extend the sequence $\alpha_0(\omega), \alpha_1(\omega), \dots$ into $\tilde \alpha_0(\omega), \tilde \alpha_1(\omega), \dots$ with each $\alpha_i(\omega)$ appearing with the multiplicity $\deg Y_{i, \omega}$, then $\tilde \alpha_0(\omega),\tilde \alpha_1(\omega) \dots$ is a disjoint union of $\frac{(p-1)p^{M-1}t}{2}$ arithmetic progressions with (the same) common difference $\frac{\varphi(q)p^M}{2q^2}$.
More precisely, we have
\begin{equation}
\label{E:finer twisted periodicity}
\tilde\alpha_{j+q^{-1}p^{M}t}(\omega \omega_0^2)=\tilde\alpha_{j}(\omega)+\tfrac{p^M}{q^2}\quad \textrm{for any }j \geq 0.
\end{equation}

\end{theorem}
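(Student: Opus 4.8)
The plan is to translate the whole statement into a sufficiently precise control of the Newton polygon of the characteristic power series of $U_p$ over the weight annulus, and then to read off the arithmetic‑progression structure from a twisting symmetry together with the comparison with classical forms of level $p^M$.

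\emph{Step 1 (reduction to the characteristic power series).} Fix $\omega$ and let $C_\omega(w,t)=\det(1-tU_p\mid S_\omega^\dagger)=\sum_{n\geq0}c_n(w)t^n$ be the Fredholm determinant of $U_p$ on overconvergent automorphic forms for $D^\times$ with weight‑character in the disk $\calW_\omega$; here $c_0=1$, and by the integral structure on $S_\omega^\dagger$ set up earlier the $c_n(w)$ may be taken in $\ZZ_p\llbracket T\rrbracket$ (up to uniformly bounded denominators). The spectral curve $\Spc_{D,\omega}$ is the zero locus of $C_\omega$, and for $w\in\calW_\omega^{>\lambda}$ the multiset $\{v(a_p(y)):\wt(y)=w\}$ is exactly the slope multiset of $\NP(C_\omega(w,-))$ with respect to $v$. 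So it suffices to show that, for $w$ with $v(T_w)$ small enough, the vertices of $\NP(C_\omega(w,-))$ occur at $w$‑independent abscissae $0=n_0<n_1<n_2<\cdots$ and the slope on $[n_{i-1},n_i]$ is $\varphi(q)\,v(T_w)\,\alpha_i(\omega)$ for a fixed rational $\alpha_i(\omega)$: the $\alpha_i(\omega)$ are then automatically increasing and tend to infinity (compactness of $U_p$), the $Y_{i,\omega}$ are the finite flat pieces cut out between consecutive vertices by Buzzard's Riesz theory, and $\deg Y_{i,\omega}=n_i-n_{i-1}$.

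\emph{Step 2 (the halo estimate on the coefficients).} Using the explicit power basis for $S_\omega^\dagger$, write down the matrix of $U_p$; its entries $u_{ij}$ satisfy explicit lower bounds on $v(u_{ij})$ coming from the overconvergence radius, and --- this is the crucial point --- $u_{ij}$ is divisible by a controlled power of $T$ away from a ``dominant diagonal'', reflecting the essentially triangular shape the matrix takes at $T=0$. From this one proves a two‑variable Newton polygon estimate: there is an explicit combinatorial function $n\mapsto(d_n,e_n)$, determined by which diagonal contributes the dominant term to the order‑$n$ principal minor, such that for $w$ with $v(T_w)$ small enough
\[
v(c_n(w))=e_n+d_n\,v(T_w).
\]
The margin by which the dominant term beats the subdominant ones is governed by the size $t$ of the weight‑$2$ space (or its level‑$p^M$ analogue), and requiring this margin to exceed the relevant multiples of $v(T_w)$ is exactly what forces the explicit bounds $\lambda=p^{-8/((p^2-1)t+8)}$ for $p>2$ and $\lambda=2^{-1/(t+1)}$ for $p=2$. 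The lower convex hull of $\{(n,e_n+d_nv(T_w))\}$ then has $w$‑independent vertices $n_i$, and one checks that the relevant leading $T$‑coefficients of the $c_n$ are $p$‑adic units, i.e.\ $e_{n_i}=0$ (this is where neatness is used, together with the identification of the ordinary locus as in Theorem~\ref{T:main theorem}); hence the slopes are exactly $\varphi(q)\,v(T_w)\,\alpha_i(\omega)$ with $\alpha_i(\omega)=\tfrac{d_{n_i}-d_{n_{i-1}}}{\varphi(q)(n_i-n_{i-1})}$, and $\deg Y_{i,\omega}$ follows from the combinatorics of $d_n$ exactly as $\deg X_{I,\omega}$ in Theorem~\ref{T:main theorem}. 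For a general (non‑neat) tame level only the qualitative existence of $\lambda$ is claimed, which follows by rerunning the argument, neatness being needed only to pin down the explicit value.

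\emph{Step 3 (arithmetic progressions).} The relation \eqref{E:finer twisted periodicity} comes from a twisting symmetry: tensoring by a suitable finite‑order character (restricting to $\omega_0^2$ on $\Delta$, trivial on $1+q\ZZ_p$) identifies component $\omega$ with component $\omega\omega_0^2$, fixes the parameter $T$, and twists $U_p$ so that, after normalizing at level $p^M$, it shifts the re‑indexed slope sequence up by $p^M/q^2$ and its index by $q^{-1}p^Mt$. For the progression statement itself one passes from overconvergent forms to classical forms of level $\Gamma_1(p^M)$ and checks, via the dictionary $T_{x^k\chi}=\exp(kq)\chi(\exp(kq))-1$, that the matrix of $U_p$ --- read to the precision $p^M$ permits --- is periodic in the weight with period $(p-1)p^{M-1}$; this makes the sequence $d_n$, hence $\tilde\alpha_n(\omega)$, eventually a disjoint union of $\tfrac{(p-1)p^{M-1}t}{2}$ arithmetic progressions with common difference $\tfrac{\varphi(q)p^M}{2q^2}$, the factor $\tfrac12$ being the $\omega\mapsto\omega\omega_0^2$ symmetry. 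Corollary~\ref{C:periodicity} is the same argument with $M$ as small as the constraints allow.

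\emph{Main obstacle.} The heart is Step 2: showing that the dominant diagonal of the infinite $U_p$‑matrix genuinely dominates, i.e.\ bounding the $p$‑adic valuations of all off‑diagonal contributions to the order‑$n$ minors sharply enough to beat the $d_n\,v(T_w)$ scaling for an explicit range of $v(T_w)$. This is a delicate combinatorial estimate on valuations of the binomial coefficients arising in the Mahler/Amice description of the $U_p$‑action, and it is what forces the non‑optimal radius $\lambda$. A secondary difficulty is matching the combinatorics of $d_n$ with the level‑$p^M$ classical dimension counts to pin down the exact multiplicities $\deg Y_{i,\omega}$ and the exact common difference, which needs careful bookkeeping of the $\Delta$‑character twists.
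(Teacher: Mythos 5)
Your Step 1 and the overall architecture are the right skeleton, and Step 2 is in the spirit of what the paper does, but there are two substantive issues.

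In Step 2 you assert that $v(c_n(w))=e_n+d_n\,v(T_w)$ for a \emph{fixed} pair $(d_n,e_n)$ valid for all $n$ and all $w$ with $v(T_w)$ small. This is stronger than what can be proved and stronger than what the argument needs. The paper sandwiches the Newton polygon of $\sum c_n(T)X^n$ between a lower bound polygon (from the $T$-adic estimates $c_n\in T^{\lambda(n)}\Lambda^{>1/p}$) and an upper bound polygon (from the fact that the Newton polygon is forced to touch the lower one at the $x$-coordinates $n_k=kqt$, which is read off from Atkin--Lehner and classicality at conductor-$q^2$ weights). The gap between the two bounding polygons is at most $\tfrac{(p^2-1)t}{8}v(T)$ (Lemma~\ref{L:vertical difference}), and what is then shown is the weaker, but sufficient, claim: \emph{for those $l$ where $(l, v(c_l(T_0)))$ lies strictly below the upper bound polygon}, there is an integer $m(l)$ with $v(c_l(T))=m(l)v(T)$ for all $T$ in the range, and moreover the corresponding coefficient $b_{l,m(l)}$ is forced to be a $p$-adic unit. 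For other $n$ no such identity is asserted; it is simply irrelevant because those points lie on or above the upper bound polygon and never contribute a vertex. Your ``dominant diagonal'' heuristic is close but you should be careful not to claim a $w$-independent valuation for every $c_n(w)$ — that would require ruling out cancellations that the argument does not (and need not) rule out. Also, where you say neatness is used to get $e_{n_i}=0$: neatness is actually used only to reduce to an explicit double-coset decomposition (Remark~\ref{R:non-neat} handles the general case by adding auxiliary level and taking $\GL_2(\ZZ/\ell)$-isotypic pieces); the unit statement comes from the touching of polygons at classical weights, not from neatness per se.

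The larger gap is in Step 3. The mechanism you describe — tensoring by a finite-order character restricting to $\omega_0^2$ on $\Delta$ and trivial on $1+q\ZZ_p$ — does not produce the slope shift you need. Twisting an automorphic representation by a character of $\QQ_p^\times$ multiplies the $U_p$-eigenvalue by the value of that character at $p$, which has norm $1$; it permutes forms between weight disks but leaves slopes unchanged. The ``matrix of $U_p$ is periodic in weight mod $(p-1)p^{M-1}$'' step is likewise not what makes the argument work. What actually produces the additive shift $p^M/q^2$ is the Atkin--Lehner functional equation: for $\psi$ of conductor $p^M$, the $U_p$-eigenvalues on $S^D_{k+2}(K^p\Iw_{p^M};\psi)$ and on $S^D_{k+2}(K^p\Iw_{p^M};\psi^{-1})$ pair up to have product $p^{k+1}$ (Proposition~\ref{P:atkin-lehner}). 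Combined with classicality to identify these slopes with $q^2p^{-M}\tilde\alpha_i(\cdot)$, this yields
\[
\tilde\alpha_{(k+1)q^{-1}p^Mt-1-i}(\psi^{-1}|_\Delta\omega_0^k)=(k+1)q^{-2}p^M-\tilde\alpha_i(\psi|_\Delta\omega_0^k),
\]
and it is the explicit dependence on $k+1$ on the right-hand side — coming from the $p^{k+1}$ in Atkin--Lehner, not from any twist — that, upon replacing $(k,\psi)$ by $(k+1,\psi\omega_0^{-1})$ and subtracting, gives the index shift by $q^{-1}p^Mt$ together with the slope shift by $p^M/q^2$, i.e.\ \eqref{E:finer twisted periodicity}. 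Without this you cannot reach the arithmetic-progression conclusion, so Step 3 as written does not close.
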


\begin{remark}
\label{R:remark of main theorem}
We first remark on the content of the theorems.
\begin{enumerate}
\item
It is implicit from the statements that, for each $X_{I,\omega}$ from Theorem~\ref{T:main theorem}, $X_{I,\omega} \times_{\calW_\omega^{>1/p}} \calW_\omega^{> \lambda}$ is the disjoint union of those $Y_{i,\omega}$ in Theorem~\ref{T:main theorem 2} for which $\alpha_i(\omega) \in I$.

\item
The bound given by Theorem~\ref{T:main theorem 2} appears to depend heavily on $t$. It might be possible to release $t$ to $t_{\bar \rho}$ by working with each residual pseudo-representation $\bar \rho$, where $t_{\bar \rho}$ denotes the dimension of the space of weight $2$ automorphic forms with Iwahori level structure at $p$ where the tame Hecke action is determined by $\bar \rho$.
More generally, we expect our argument to continue to hold for a direct summand of the completed homology of a modular curve or a definite quaternion algebra. We will revisit this idea in a future work.

\quash{\item
We expect the theorem to hold  for $p=2$ with some minor modification.}

\item For a continuous character $\chi$ of $\ZZ_p^\times$, the $p$-adic valuation of $T_\chi$ is the same as the $p$-adic valuation of $\chi(c) -1$ for any topological generator $c$ of $(1+q\ZZ_p)^\times$.  Therefore, both theorems do not depend on our convenient choice of the generator $\exp(q)$. Moreover, the region $|T|>1/p$ is stable under the change of variable $T \mapsto \exp(-kq)(T+1)-1$ that recenters the weight disks around the classical weight $x^k$.

\item
The radius $1/p$ of Theorem~\ref{T:main theorem} seems to be optimal if $p>2$. When $p=2$, one might be able to improve the radius to $1/4$ as opposed to the $1/2$ given in Theorem~\ref{T:main theorem}.
The estimate of radius $\lambda$ and the positive integer $M$ in  Theorem~\ref{T:main theorem 2} may not be optimal. We do not know whether it is reasonable to expect any optimal bound.
\item
The proof of Theorem~\ref{T:main theorem} gives rise to a certain integral model of the spectral curve near the boundary of weight space, by factoring the characteristic power series of $U_p$ integrally. See Remark~\ref{R:integral model of spectral curve} for an elaborated discussion.

\item
The difference between the spectral curve and the actual eigencurve is minor for the type of questions we consider in this paper, as the eigencurve is essentially a (partial) normalization of the spectral curve (and possibly changing some non-reduced structure).
\end{enumerate}

\end{remark}

\begin{remark}
\label{R:relation to others work}
We remark on the relation to the literature.
\begin{enumerate}
\item
By G. Chenevier's $p$-adic Jacquet--Langlands correspondence \cite{chenevier}, we can translate results from the case of automorphic forms for definite quaternion algebras to the case of modular forms, and hence prove a large portion of Conjecture~\ref{Conj:Buzzard-Kilford}.
The only connected components of the eigencurve we cannot access by this method are the ones whose tame part are all principal series. However, see Remark~\ref{R:generalization}(2) for a discussion of potential approaches to this case.
\item
While the main result of \cite{wan-xiao-zhang} has now become a corollary of our two main theorems (See Corollary~\ref{C:classical}), our proof relies on several ideas developed therein.
The key improvement from \cite{wan-xiao-zhang} is that we choose a better basis to estimate the Newton polygon.
See Remark~\ref{R:Buzzard freedom} for a more detailed discussion.

\item
Some related results were proved by F. Andreatta, A. Iovita, and V. Pilloni for the usual overconvergent (Hilbert) modular forms \cite{AIP} in the sense of Coleman--Mazur \cite{coleman-mazur} and Andreatta--Iovita--Pilloni--Stevens \cite{AIS, pilloni}; they constructed a certain compactification of weight space in the category of analytic adic spaces, and showed that the sheaf of overconvergent modular forms extends. They also obtained certain results on the geometry of the eigencurve near the boundary of weight space. Although their technique appears different to ours, both works have the same goal: realizing Coleman's idea \cite{coleman-halo} in the corresponding context. So it would be interesting to compare the two approaches.

\item
The second half of Theorem~\ref{T:main theorem 2} follows from the first half by classicality results and Atkin--Lehner theory. This argument was independently found by J. Bergdall and R. Pollack \cite{bergdall-pollack}.

\item
Corollary~\ref{C:periodicity} indicates a peculiar relation between the degrees of components of the spectral curve over one weight disk and those over another weight disk shifted by the square of the Teichm\"uller character. This might be related to some observation by F. Calegari, known as the ``theta-cycle" phenomenon: in characteristic $p$, there is a $\theta$ map on the space of mod $p$ modular forms, increasing the weight by $2$; this seems to have some magical effect on the slope of modular forms (which are of characteristic zero).

\item
In this paper, we do not touch the geometry of the eigencurves over the center of weight space, which is expected to be very complicated.  We refer to \cite{wan, buzzard-question, lisa-clay, buzzard-gee, bergdall-pollack2} for a more comprehensive discussion.
\end{enumerate}
\end{remark}

We now turn to discussing the application of our main theorem.
For a character $\psi:(\ZZ/p^m\ZZ)^\times \to \CC_p^\times$ that does not factor through $(\ZZ/p^{m-1}\ZZ)^\times$ (which we call \emph{$p$-primitive}) and $k\in \ZZ_{\geq 0}$, we use $S_{k+2}^D(\psi)$ to denote the space of automorphic forms on $D^\times$ of weight $k+2$ with the fixed tame level, $p^m$-Iwahori level at $p$, and  Nebentypus character $\psi$.
Combining Theorems~\ref{T:main theorem} and \ref{T:main theorem 2} with the classicality result (Proposition~\ref{P:classicality}), one can deduce strong consequences regarding the slopes of classical automorphic forms.  
Roughly speaking, we prove that \emph{knowing the slopes of weight $2$ automorphic forms of $\varphi(q)$ characters with a certain conductor at $p$ is enough to determine the slopes of all automorphic forms with larger conductors at $p$.}
The precise statement is as follows, whose proof will appear in Subsection~\ref{S:proof of corollary}.
\begin{corollary}
\label{C:classical}
\begin{enumerate}
\item
Let $\psi$ be a $p$-primitive character of $(\ZZ/p^m\ZZ)^\times$ with $m \geq 2$ if $p>2$, and $m\geq 4$ if $p=2$.
Let $\beta_0(k, \psi), \dots, \beta_{q^{-1}p^{m}(k+1)t-1}(k, \psi)$ denote the sequence of slopes of the $U_p$-action on $S_{k+2}^D(\psi)$, in non-decreasing order and counted with multiplicity.
Then we have
\begin{equation}
\label{E:slopes rough bound}
\frac{q^2}{p^m}\big(\lfloor n/qt\rfloor \big)\leq
\beta_n(k, \psi) \leq \frac{q^2}{p^m}\big(\lfloor n/qt  \rfloor +1 \big),
\end{equation}
for $n =0, \dots, q^{-1}p^{m}(k+1)t-1$.
Note the inequalities we obtained are \emph{independent} of the weight $k+2$.

\item
Let $M$ be a positive integer so that $p^{-q/p^{M-1}(p-1)}>\lambda$; we also require $M \geq 2$ if $p>2$ and $M\geq4$ if $p=2$.
For each character $\omega$ of $\Delta$, we choose a $p$-primitive character $\psi$ of $(\ZZ/p^{M}\ZZ)^\times$ as above so that $\psi|_\Delta = \omega$, and 
 let $\beta_0(\omega), \dots, \beta_{q^{-1}p^{M}t-1}(\omega)$ denote the sequence of slopes of the $U_p$-action on $S_{2}^D(\psi)$, in non-decreasing order and counted with multiplicity.
(This sequence does not depend on the choice of $\psi$.)
Then for any $k \in \ZZ_{\geq 0}$ and any $p$-primitive character $\psi_m$ of $ (\ZZ/p^m\ZZ)^\times $ with $m \geq M$, the sequence of the slopes of the $U_p$-action on $S^D_{k+2}(\psi_m)$ is given by
\[
\bigcup_{n=0, \dots,p^{m-M}(k+1)-1}
 \Big\{p^{M-m}\big(\beta_0(\psi_m|_\Delta  \omega_0^{k-2n})+n\big), \dots, p^{M-m}\big(\beta_{q^{-1}p^{M}t-1}(\psi_m|_\Delta  \omega_0^{k-2n})+n\big) \Big\}.
\]
\end{enumerate}
\end{corollary}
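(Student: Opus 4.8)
The plan is to deduce both parts formally from Theorems~\ref{T:main theorem} and~\ref{T:main theorem 2} together with the classicality result (Proposition~\ref{P:classicality}), the real work being an explicit translation. The starting point is that a form in $S^D_{k+2}(\psi)$, with $\psi$ a $p$-primitive character of $(\ZZ/p^m\ZZ)^\times$, produces a point of $\Spc_D$ lying over the weight $x^k\psi$, so I first compute $v(T_{x^k\psi})$. Since $\psi$ is $p$-primitive of conductor $p^m$, its restriction to $1+q\ZZ_p$ has exact order $p^{m-1}$ if $p$ is odd (resp.\ $2^{m-2}$ if $p=2$), so $\psi(\exp q)$ is a primitive $p^{m-1}$-th (resp.\ $2^{m-2}$-th) root of unity with $v(\psi(\exp q)-1)=\tfrac{1}{p^{m-2}(p-1)}$ (resp.\ $\tfrac{1}{2^{m-3}}$), while $v(\exp(kq)-1)\ge v(q)$ strictly exceeds this under our hypotheses; hence $\varphi(q)\,v(T_{x^k\psi})=q^2/p^m$, and in particular $|T_{x^k\psi}|>1/p$, so $x^k\psi$ lies in the halo. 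Then, using Proposition~\ref{P:classicality} together with the dimension identity $\dim S^D_{k+2}(\psi)=q^{-1}p^m(k+1)t$, I would identify the multiset of $U_p$-slopes on $S^D_{k+2}(\psi)$ with the $q^{-1}p^m(k+1)t$ smallest values (with multiplicity) of $v(a_p(x))$ as $x$ runs over the fiber of $\Spc_{D,\omega}$ over $x^k\psi$, where $\omega=\psi|_\Delta\cdot\omega_0^k$.

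For part (1): by Theorem~\ref{T:main theorem}, over $x^k\psi$ the piece $X_{n,\omega}$ contributes $\deg X_{n,\omega}$ slopes equal to $\tfrac{q^2}{p^m}n$ and $X_{(n,n+1),\omega}$ contributes $\deg X_{(n,n+1),\omega}$ slopes in $\bigl(\tfrac{q^2}{p^m}n,\tfrac{q^2}{p^m}(n+1)\bigr)$. Hence the number of fiber slopes $<\tfrac{q^2}{p^m}N$ equals $\sum_{n<N}\bigl(\deg X_{n,\omega}+\deg X_{(n,n+1),\omega}\bigr)$, which, after substituting the explicit degree formulas, telescopes to $Nqt-r_{\ord}(\omega^{-1}\omega_0^{2N-2})\le Nqt$; similarly the number of fiber slopes $\le\tfrac{q^2}{p^m}N$ is $Nqt+r_{\ord}(\omega\omega_0^{-2N})\ge Nqt$. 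Taking $N=\lfloor n/(qt)\rfloor$ shows at most $n$ slopes lie below $\tfrac{q^2}{p^m}\lfloor n/(qt)\rfloor$, hence $\beta_n(k,\psi)\ge\tfrac{q^2}{p^m}\lfloor n/(qt)\rfloor$; taking $N=\lfloor n/(qt)\rfloor+1$ shows more than $n$ slopes lie at or below $\tfrac{q^2}{p^m}(\lfloor n/(qt)\rfloor+1)$, hence $\beta_n(k,\psi)\le\tfrac{q^2}{p^m}(\lfloor n/(qt)\rfloor+1)$; since $q^2\mid p^m$ under the hypotheses, at the top index the upper bound reads $k+1$, consistent with classicality.

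For part (2), I would first apply the analysis of the first paragraph to the chosen $p$-primitive character $\psi$ of conductor $p^M$ and weight $2$ (so $k=0$, $m=M$): since $q^2\mid p^M$, this gives $\beta_i(\omega)=\tfrac{q^2}{p^M}\tilde\alpha_i(\omega)$ for $0\le i<c$ where $c:=q^{-1}p^Mt$, which also shows $\{\beta_i(\omega)\}$ is independent of the choice of $\psi$. Next apply it to $\psi_m$ (of conductor $p^m\ge p^M$) and weight $k+2$: the hypothesis $p^{-q/p^{M-1}(p-1)}>\lambda$ unwinds to $|T_\psi|>\lambda$, and $|T_{x^k\psi_m}|\ge|T_\psi|$, so Theorem~\ref{T:main theorem 2} applies over $x^k\psi_m$; writing $\omega'=\psi_m|_\Delta\cdot\omega_0^k$, the $U_p$-slopes on $S^D_{k+2}(\psi_m)$ are $\tfrac{q^2}{p^m}\tilde\alpha_j(\omega')$ for $0\le j<q^{-1}p^m(k+1)t$ with multiplicity. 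Finally, writing each such $j$ uniquely as $j=i+nc$ with $0\le i<c$ and $0\le n<p^{m-M}(k+1)$, the $n$-fold iteration of \eqref{E:finer twisted periodicity} gives $\tilde\alpha_{i+nc}(\omega')=\tilde\alpha_i(\omega'\omega_0^{-2n})+n\,p^M/q^2$, so
\[
\tfrac{q^2}{p^m}\tilde\alpha_{i+nc}(\omega')=p^{M-m}\bigl(\tfrac{q^2}{p^M}\tilde\alpha_i(\omega'\omega_0^{-2n})+n\bigr)=p^{M-m}\bigl(\beta_i(\psi_m|_\Delta\,\omega_0^{k-2n})+n\bigr)
\]
(using $\omega'\omega_0^{-2n}=\psi_m|_\Delta\,\omega_0^{k-2n}$ and $i<c$); letting $i$ and $n$ vary produces the claimed union of $p^{m-M}(k+1)$ blocks.

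I expect the main obstacle to be the reduction step itself, namely showing that the classical $U_p$-slopes form precisely an \emph{initial segment} of the fiber slopes of $\Spc_D$ of the correct length: the number of overconvergent forms of slope $<k+1$ is strictly less than $\dim S^D_{k+2}(\psi)$, so some classical forms sit at the critical slope $k+1$, and one needs the sharp classicality statement of Proposition~\ref{P:classicality} together with the dimension formula to pin down that exactly the bottom $q^{-1}p^m(k+1)t$ slopes are classical. The remaining difficulty is routine: carrying out the $p$-adic valuation estimates and the radius comparison uniformly, while tracking the case distinction $q=p$ versus $q=4$.
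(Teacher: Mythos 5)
Your argument is correct and follows essentially the same route as the paper's own proof: for part (1) you derive the bounds from the degree estimates $n_N^-\in[n_N-t,n_N]$ and $n_N^+\in[n_N,n_N+t]$ (the paper simply cites its displayed inequalities~\eqref{E:inequality1}--\eqref{E:inequality2}); for part (2) you identify $\beta_i(\omega)=q^2p^{-M}\tilde\alpha_i(\omega)$, iterate~\eqref{E:finer twisted periodicity}, and cut down to the classical range via Proposition~\ref{P:classicality}, exactly as in the text.
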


Another application is the following result.
\begin{corollary}
\label{C:component has classical points}
Each irreducible component of the spectral curve $\Spc_D$ contains a classical point of weight $2$ (with possibly large conductor at $p$).
\end{corollary}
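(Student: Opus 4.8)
The plan is to combine Theorem~\ref{T:main theorem} with the classicality result (Proposition~\ref{P:classicality}) and the elementary fact that classical weight-$2$ points of arbitrarily large conductor at $p$ accumulate at the boundary of weight space. Since $\calW=\coprod_\omega\calW_\omega$, any irreducible component $Z$ of $\Spc_D$ lies over a single disk $\calW_\omega$, and $\Spc_{D,\omega}$ is the Fredholm hypersurface cut out by the characteristic power series of $U_p$. First I would invoke the general theory of Fredholm hypersurfaces over a connected normal base (Coleman--Mazur \cite{coleman-mazur}; see also Conrad's work on irreducible components of rigid spaces) to conclude that $\wt\colon Z\to\calW_\omega$ is surjective; in particular $Z^{>1/p}:=Z\cap\Spc_{D,\omega}^{>1/p}$ is a nonempty closed curve inside $\Spc_{D,\omega}^{>1/p}$.

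Next, using Theorem~\ref{T:main theorem}, write $\Spc_D^{>1/p}=X_0\coprod X_{(0,1)}\coprod X_1\coprod\cdots$, a \emph{disjoint} union of pieces finite and flat over $\calW^{>1/p}$. As the pieces are open and closed, each irreducible component of $\Spc_{D,\omega}^{>1/p}$ is contained in exactly one $X_I$; pick an irreducible component $W$ of $Z^{>1/p}$ and let $X_I$ (with $I=[n,n]$ or $(n,n+1)$) be the piece containing it. Since $W$ is closed in $X_I$ and $X_I$ is finite over $\calW_\omega^{>1/p}$, the map $\wt\colon W\to\calW_\omega^{>1/p}$ is finite; as $W$ and $\calW_\omega^{>1/p}$ are both irreducible of dimension one, $\wt(W)=\calW_\omega^{>1/p}$. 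Moreover Theorem~\ref{T:main theorem} gives, for every $x\in W\subseteq X_I$, the slope bound $v(a_p(x))\le\varphi(q)(n+1)\,v(T_{\wt(x)})$.

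Finally, I would choose a $p$-primitive character $\psi$ of $(\ZZ/p^m\ZZ)^\times$ with $\psi|_\Delta=\omega$ and $m$ large; the associated weight-$2$ point $\kappa_\psi\in\calW_\omega$ satisfies $v(T_{\kappa_\psi})=v(\psi(\exp q)-1)\to 0$ as $m\to\infty$, and $\kappa_\psi\in\calW_\omega^{>1/p}$ once $m\ge2$ (resp. $m\ge4$ if $p=2$). Fixing $m$ large enough that $\varphi(q)(n+1)\,v(T_{\kappa_\psi})<1$, the surjectivity of $\wt|_W$ produces a point $w\in W$ with $\wt(w)=\kappa_\psi$; it records a $U_p$-eigenvalue of slope $v(a_p(w))\le\varphi(q)(n+1)\,v(T_{\kappa_\psi})<1$ on the space of weight-$2$ overconvergent automorphic forms on $D^\times$ with nebentypus $\psi$. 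By Proposition~\ref{P:classicality}, a weight-$2$ overconvergent form of slope $<1$ is classical, so $w$ is a classical point (of weight $2$, with conductor $p^m$ at $p$) lying on $W\subseteq Z$, as desired.

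The substance of this argument is carried entirely by Theorem~\ref{T:main theorem} and Proposition~\ref{P:classicality}; the point I would be most careful about is the first step — ensuring that the irreducible component $Z$ genuinely reaches the halo $\calW_\omega^{>1/p}$ via the Fredholm-hypersurface surjectivity — together with the essential but routine observation that classical weight-$2$ points of arbitrarily large conductor at $p$ are Zariski-dense in $\calW_\omega^{>1/p}$, which is what allows us to push $v(T_{\wt(x)})$ below $1/\big(\varphi(q)(n+1)\big)$. No genuinely new difficulty is anticipated beyond this bookkeeping.
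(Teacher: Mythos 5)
Your proposal is correct and follows essentially the same route as the paper's proof: invoke Coleman--Mazur's result that each irreducible component extends to the boundary of weight space, combine it with the slope bound from Theorem~\ref{T:main theorem} to produce a weight-$2$ point of slope $<1$ above a large-conductor nebentypus, and then apply the classicality result (Proposition~\ref{P:classicality}). The paper's proof is more terse (it directly cites \cite[Corollary~1.3.13]{coleman-mazur} and the ``slopes tend to zero'' consequence of Theorem~\ref{T:main theorem}), but the logical structure is identical.
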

\begin{proof}
By \cite[Corollary~1.3.13]{coleman-mazur}, each irreducible component extends to the boundary of  weight space, where the slopes tend to zero by Theorem~\ref{T:main theorem}.
So this irreducible component must contain a point above a classical weight $2$ (of large conductor at $p$) and with slope strictly less than $1$. This point must correspond to a classical automorphic form by the classicality result (see \cite[Proposition~4]{buzzard} or Proposition~\ref{P:classicality}).
\end{proof}

\begin{remark}
This corollary naturally appeared in the joint work of J. Pottharst and the third author \cite{pottharst-xiao}.
In that paper, we studied the parity conjecture which predicts that the vanishing orders of the L-functions of modular forms are congruent modulo $2$ to the dimension of the associated Selmer groups.
The basic idea is that, one can prove this in weight $2$ by applying some argument involving Heegner points.
The main theorem of \cite{pottharst-xiao} is roughly that, on each irreducible component of the eigencurve, if the parity conjecture holds for one classical point, then it holds for all classical points. So by Corollary~\ref{C:component has classical points}, any modular form, if can be translated to an automorphic form over a definite quaternion algebra (split at $p$), is linked to a classical automorphic form of weight $2$ (of slope $<1$). Hence the parity conjecture holds for that modular form.

We expect a similar argument can be applied to study the $p$-adic Gross--Zagier formula, to bypass the essential difficulty imposed by requiring slopes $<1$.
\end{remark}



\subsection{Idea of the proof of Theorems~\ref{T:main theorem} and \ref{T:main theorem 2}}
We point out a few key points in the proof of the main theorems.
\begin{enumerate}

\item
In Coleman's private note \cite{coleman-halo}, he advocated the idea of viewing the weight space and the eigencurve as formal schemes, as opposed to (increasing unions of) rigid analytic spaces.
He pointed out that the key to realize this is to provide a certain ``integral model" of the space of overconvergent modular forms \emph{over the ``halo" of weight space}.
Although we shall be working with a context different from what he suggested in \cite{coleman-halo}, 
\emph{this viewpoint is absolutely crucial to our paper.}
In the case for definite quaternion algebra we study in this paper, Coleman's idea amounts to construct a ``Banach space" over  the Iwasawa algebra $\Lambda$, whose base change to each affinoid subdomain of $\calW$ is ``close to" the Banach space of overconvergent automorphic forms in the sense of Buzzard \cite{buzzard}, at least having the same characteristic power series for $U_p$.
In fact, this expected space is not mysterious: its dual is  the coinvariant subspace of Emerton's completed homology under the action of the unipotent radical of the Borel subgroup at $p$, which is a compact topological $\Lambda$-module (in the sense of \cite{schneider-teitelbaum}).
In this paper, we present the construction using induced representations; this gives rise to a ``Banach $\Lambda$-module", which we call the space of \emph{integral $p$-adic automorphic forms}. (The action is slightly twisted to match with the convention used by Buzzard \cite{buzzard2}.)
We refer to Remark~\ref{R:relation to Emerton} for the relation with Emerton's completed homology and potential generalizations.

\quash{One caveat is the following: the $U_p$-action on the space of integral automorphic forms may \emph{not} be compact (Remark~\ref{R:Up not compact}); in particular, the characteristic power series of $U_p$, although it can still be defined for the explicit orthonormal basis we care about, would potentially depend on the choice of an orthonormal basis.
This annoying issue does not affect the proof of Theorem~\ref{T:main theorem}, but we feel that giving a good integral model of the space of overconvergent modular forms is very important.
For this, 
we construct in Section~\ref{Sec:integral model} a variant of this space, which is properly rescaled on an explicit orthonormal basis.
This space is only defined over $\Lambda^{>1/p}$; we do not know if one can extend the definition to over $\Lambda$.
We hope this space may be useful for the study of arithmetic of the eigencurve near the boundary of weight space.}

\item
We choose to work with a definite quaternion algebra as opposed to the usual overconvergent modular forms, to circumvent the complication of the geometry of the modular curves, as presented in all prior works of direct computation (they all rely on the explicit equation that defines the modular curve, which is clearly inaccessible in general).
In our case,  the $U_p$-action on the space of integral $p$-adic automorphic forms can be written reasonably explicitly, as explained in the first part of Section~\ref{Sec:Estimation of NP}.  This was inspired by the thesis of D. Jacobs \cite{jacobs} (a former student of Buzzard), and our generalization \cite{wan-xiao-zhang}.

\item
Using the explicit description of the space of integral $p$-adic automorphic forms, we look at the associated infinite matrix $(P_{i,j})$ with respect to a basis originated from the Mahler basis $1, z, \binom z2, \dots$ on the space of $p$-adic continuous functions on $\ZZ_p$.
A mild $p$-adic analysis computation (which is the core of our paper) shows that $P_{i,j} \in \gothm_\Lambda^{\max\{0, \lfloor i/t\rfloor - \lfloor j / pt\rfloor\}}$, where $\gothm_\Lambda$ is the ideal of $\Lambda$ generated by $p$ and $T$.
As a consequence, if we write $c_0 + c_1X + \cdots \in\Lambda\llbracket X \rrbracket$ for the characteristic power series for the $U_p$-operator, then $c_i$ belongs to \[
T^{\lambda_i} \Lambda^{>1/p},
\]
where $\lambda_i$ is recursively defined by $\lambda_0 = 0$, and $\lambda_i-\lambda_{i-1} = \lfloor i/t \rfloor - \lfloor i/pt \rfloor$.
This gives rise to a lower bound on the Newton polygon over each point of weight space with $|T|>1/p$.

\item
It is somewhat a lucky coincidence that, {\it the Newton polygon lower bound obtained in (3) partially agrees with the actual Newton polygon at classical weights}.
This allows us to conclude the main theorems.  This part of the argument was inspired by similar tricks used in joint work of the last two authors with C. Davis \cite{davis-wan-xiao}.
\end{enumerate}

\subsection{Structure of the paper}

Section~\ref{Sec:automorphic forms} is devoted to constructing a certain integral model for the space of $p$-adic automorphic forms on a definite quaternion algebra.
The action of $U_p$-operator on this integral model was made explicit in the first part of Section~\ref{Sec:Estimation of NP}; and
we prove Theorem~\ref{T:main theorem} in the latter part of Section~\ref{Sec:Estimation of NP} using a close estimate of the Newton polygon.
Section~\ref{Sec:distribution of Up} is devoted to proving Theorem~\ref{T:main theorem 2}.
In Section~\ref{Sec:integral model}, we provide a variant of the construction given in Section~\ref{Sec:automorphic forms},  which can be regarded as integral models of the space of overconvergent automorphic forms.

\subsection{Acknowledgments}
We cannot emphasize enough the importance of the ideas of Robert Coleman to this paper.
We thank Barry Mazur for his constant encouragement and many suggestions. We
thank Vincent Pilloni for sharing his insight into Coleman's idea.
We thank the anonymous referees for their impressively helpful report which greatly improved the exposition of the paper as well as simplified some arguments.
We thank John Bergdall, Gaetan Chenevier, Keith Conrad, and Robert Pollack for interesting discussions.
D.W. and L.X. thank the hospitality of Beijing International Center for Mathematical Research when they visited.

\subsection{Notation}
Throughout this paper, $\mathbb{N}$ denotes the set of \emph{positive} integers. We fix a prime number $p$. Set $q=p$ for $p>2$, and $q=4$ for $p=2$.  Let $\varphi(q)$ denote the Euler function, namely, $\varphi(q) =p-1$ if $p>2$ and $\varphi(q) = 2$ if $p=2$. Write $\AAA$ for the ring of adeles of $\QQ$, and $\AAA_f$ (resp. $\AAA_f^{(p)}$) the subring of finite adeles (resp. finite prime-to-$p$ adeles).

For $A$ an affinoid $\QQ_p$-algebra, we use $A^\circ$ to denote the subring of power bounded elements.
The notions $A\langle z \rangle$ and $A^\circ\langle z \rangle$ are reserved for denoting Tate algebras.

\emph{The row and column indices of matrices always start with $0$.}
We use $I_n$ for $n \in \ZZ_{\geq 0}$ or $\infty$ to denote the identity $n \times n$-matrix.

\section{Automorphic forms for definite quaternion algebras}
\label{Sec:automorphic forms}
We first discuss carefully various versions of (overconvergent) automorphic forms for definite quaternion algebras. In particular, we give a certain ``integral model" of the space of $p$-adic automorphic forms.

\begin{notation}
 We write $\ZZ_p^\times $ as $\Delta \times (1+q\ZZ_p)^\times$ with $\Delta \cong (\ZZ/q\ZZ)^\times$. We identify $(1+q\ZZ_p)^\times $ with $ \ZZ_p$ via $\frac 1q \log(-)$. 
Let $\Lambda$ denote the Iwasawa algebra 
\[
\ZZ_p\llbracket\ZZ_p^\times\rrbracket \cong \ZZ_p[\Delta] \otimes_{\ZZ_p} \ZZ_p\llbracket(1+q\ZZ_p)^\times\rrbracket \cong \ZZ_p[\Delta] \otimes_{\ZZ_p} \ZZ_p\llbracket T\rrbracket,
\]
where $T$ corresponds to $[\exp(q)]-1$.  Here, for $a \in \ZZ_p^\times$, we use $[a]$ to denote its image in $\Lambda^\times$; so $[-]: \ZZ_p^\times \to \Lambda^\times$ is the \emph{universal character} of $\ZZ_p^\times$.
In particular, each continuous ring homomorphism $\chi: \Lambda \to \CC_p$ defines a continuous character $\chi\circ [-]: \ZZ_p^\times \to \CC_p^\times$ (which we still denote by $\chi$).
Conversely, all continuous $\CC_p$-valued characters of $\ZZ_p^\times $ may be obtained this way.

We use 
$\gothm_\Lambda$ to denote the ideal $(p, T)$ of $\Lambda \cong \ZZ_p[\Delta] \otimes_{\ZZ_p} \ZZ_p\llbracket T\rrbracket$.

Let $\calW$ denote the rigid analytic space associated to $\Lambda$.
For each ($\CC_p$-valued) continuous character $\chi$ of $\ZZ_p^\times$, we write $T_\chi: = \chi(\exp(q))-1$ for the \emph{$T$-coordinate} of the associated point on weight space.
For $r \in (0,1)\cap p^\QQ$, we use $\calW^{\leq r}$ to denote the union of the disks where $|T|\leq r$; it is an affinoid subdomain of the weight space.

Following Buzzard \cite[Section 5]{buzzard2}, we  define for $m \in \NN$ the rigid analytic spaces
\begin{align*}
\bfB_{p^{-m}} &= \big\{z \in \calO_{\CC_p}\; \big|\; |z-a| \leq p^{-m} \textrm{ for some }a \in \ZZ_p\big\};\\
\bfB_{p^{-m}} ^\times &= \big\{z \in \calO_{\CC_p}\; \big|\; |z-a| \leq p^{-m} \textrm{ for some }a \in \ZZ_p^\times\big\}.
\end{align*}

For $m \in \NN$ ($m \geq 2$ if $p=2$), we say a continuous character $\chi: \ZZ_p^\times \to A^\times$ with values in an affinoid $\QQ_p$-algebra $A$ is \emph{$m$-locally analytic} if  for each closed point $x \in \Max(A)$ and the corresponding character $\chi_x: \ZZ_p^\times \xrightarrow{ \chi}A^\times \to k(x)^\times$, we have $v(T_{\chi_x}) > q/p^{m}(p-1)$.
In this case,
$\chi$ extends to a continuous homomorphism 
\begin{align*}
\kappa: (\ZZ_p + p^m A^\circ\langle z \rangle)^\times = \ZZ_p^\times \cdot (1+p^m& A^\circ\langle z \rangle) \longrightarrow (A^\circ\langle z \rangle)^\times
\\
a\cdot x &\longmapsto \chi(a) \cdot \chi(\exp(p^m))^{(\log x)/{p^m}}.
\end{align*}
When $A$ is a finite extension $E$ of $\QQ_p$, this means that $\chi$ extends to a homomorphism of rigid group schemes $\chi: \bfB^\times_{p^{-m}} \to \GG_{m, E}^\rig$.
See e.g. \cite[(3.1.2)]{wan-xiao-zhang} for more discussion.

For $m \in \NN$, a continuous character $\psi: \ZZ_p^\times \to E^\times$ (with $E$ a finite extension over $\QQ_p$) is called a \emph{finite character of conductor $p^m$} if it factors through $(\ZZ/p^m\ZZ)^\times$ but not $(\ZZ/p^{m-1}\ZZ)^\times$.
We say a continuous character $\chi$ of $\ZZ_p^\times$ is \emph{classical} if it sends $x$ to $x^k \psi(x)$ for an integer $k \geq 0$ and a finite character $\psi$ of conductor $p^m$. We write $(k,\psi)$ for such a character; it is $m$-locally analytic, because
\begin{itemize}
\item
(when $p>2$) $v(T_{(k, \psi)})\geq1$ if $m=1$, and $v(T_{(k, \psi)})= 1/p^{m-2}(p-1)$ if $m\geq 2$, and
\item
(when $p=2$) $v(T_{(k, \psi)})\geq1$ if $m = 3$, and $v(T_{(k, \psi)})= 1/p^{m-3}$ if $m\geq 4$.
\end{itemize}  By abuse of language, we say this $(k,\psi)$ has \emph{conductor $p^m$}. In this paper, the weight of automorphic forms will be $k+2$.
\end{notation}

\subsection{Subgroups of $\GL_2(\ZZ_p)$}
\label{SS:subgroups of GL2(Zp)}

We consider the following subgroups of $\GL_2(\QQ_p)$ (for $m \in \NN$):
\[
\Iw_{p^m}\cong 
\begin{pmatrix} \ZZ_p^\times & \ZZ_p \\ p^m\ZZ_p & \ZZ_p^\times
\end{pmatrix} \ \supset \ 
B(\ZZ_p)\cong 
\begin{pmatrix} \ZZ_p^\times & \ZZ_p \\ 0 & \ZZ_p^\times
\end{pmatrix} \ \supset \ 
N(\ZZ_p)\cong 
\begin{pmatrix}1 & \ZZ_p \\ 0 & 1
\end{pmatrix},
\]
\[
T(\ZZ_p) = \begin{pmatrix}
\ZZ_p^\times &0\\0&\ZZ_p^\times
\end{pmatrix} \quad \textrm{and} \quad \bar N(p^m\ZZ_p)\cong 
\begin{pmatrix}1 & 0\\p^m\ZZ_p  & 1
\end{pmatrix}.
\]
The \emph{Iwasawa decomposition} is the isomorphism
\begin{equation}
\label{E:Iwasawa decomposition}
\xymatrix@R=0pt{
N(\ZZ_p) \times T(\ZZ_p) \times \bar N(p^m\ZZ_p) \ar[rr]^-\cong &&\Iw_{p^m}
\\
( n, t,\bar n) \ar@{|->}[rr]&&  n t\bar n.
}
\end{equation}
We will often identify $\bar N(q\ZZ_p)$ with $\ZZ_p$ by sending $\big( \begin{smallmatrix}
1&0\\qz&1
\end{smallmatrix} \big)$ to $z$ for $z\in \ZZ_p$.
The Iwahori subgroup $\Iw_{q}$ admits an anti-involution:
\begin{equation}
\label{E:anti-convolution}
g = \big( \begin{smallmatrix}
a&b\\ c&d
\end{smallmatrix} \big) \mapsto  g^*: = \big( \begin{smallmatrix}
1&0\\0&q
\end{smallmatrix} \big) g^\mathrm{t} \big( \begin{smallmatrix}
1&0\\0&q^{-1}
\end{smallmatrix} \big) = \big( \begin{smallmatrix}
a&c/q\\ qb&d
\end{smallmatrix} \big).
\end{equation}
In particular, $(gh)^* = h^*g^*$ for $g,h \in \Iw_{q}$.

Consider the natural homomorphism $\pi_d: B(\ZZ_p) \to \ZZ_p^\times$ sending $\big(\begin{smallmatrix}
a & b\\0&d
\end{smallmatrix} \big) $ to $d$.
Any continuous character of $\ZZ_p^\times$ can be viewed as a character of $B(\ZZ_p)$ by composing with $\pi_d$.
We will only consider characters of $B(\ZZ_p)$ of this form.

\subsection{Induced representation}
\label{SS:induced representation}
Let $A$ be a topological ring in which $p$ is topologically nilpotent; and let $\chi:\ZZ_p^\times \to A^\times$ be a continuous character, viewed as a character of $B(\ZZ_p)$ by composing with $\pi_d$ as above.
Consider the following induced representation of $\Iw_q$:
\[
\Ind_{B(\ZZ_p)}^{\Iw_q}(\chi): = \big\{ \textrm{continuous functions }f: \Iw_q \to A \;|\; f(bg) = \chi(b) f(g) \textrm{ for }b \in B(\ZZ_p),\; g \in \Iw_q\big\},
\]
where instead of the usual left action, we consider the right action of $h \in \Iw_q$ by sending $f$ to $f|\!|^\chi_h: g \mapsto f(gh^{\mathrm *})$. (The reason for our choice is to match with the convention used in Buzzard \cite{buzzard2} as shown by \eqref{E:right action on C(N)} below.)
The Iwasawa decomposition~\eqref{E:Iwasawa decomposition} gives the following isomorphism, which made this induced representation explicit:
\begin{equation}
\label{E:explicit induced representation}
\xymatrix@R=0pt{
\Ind_{B(\ZZ_p)}^{\Iw_q}(\chi) \ar[r]^-\cong & \calC (\ZZ_p; A): = \big\{ \textrm{continuous functions } \ZZ_p \to A\big\}
\\
f \ar@{|->}[r] & h(z) := f\Big({ \begin{pmatrix}
1&0\\ qz &1
\end{pmatrix}}
\Big).
}
\end{equation}
Then the (right) action of $\big(\begin{smallmatrix}
a&b\\c&d
\end{smallmatrix} \big) \in \Iw_q$
on $\Ind_{B(\ZZ_p)}^{\Iw_q}(\chi)$ induces the following action on $\calC (\ZZ_p; A)$:
\begin{equation}
\label{E:right action on C(N)}
h \big|\!\big|^\chi_{\big( \begin{smallmatrix}
a&b\\c&d
\end{smallmatrix} \big) } (z) = f \Big( \big( \begin{smallmatrix}
1&0\\qz&1
\end{smallmatrix} \big)\big( \begin{smallmatrix}
a& c/q\\qb&d
\end{smallmatrix} \big) \Big) = f \Big( \big( \begin{smallmatrix}
\frac{ad-bc}{cz+d}&c/q\\0&cz+d
\end{smallmatrix} \big)\big( \begin{smallmatrix}
1&0\\q\frac{az+b}{cz+d}&1
\end{smallmatrix} \big) \Big)= \chi(cz+d) h\Big( \frac{az+b}{cz+d} \Big).
\end{equation}
One checks that this action extends to an action of the monoid 
\begin{equation}
\label{E:monoid M1}
\bfM_1
: = \big\{ \big( \begin{smallmatrix}
a&b \\c&d
\end{smallmatrix} \big) \in \rmM_2(\ZZ_p) \; |\; 
q|c, \ p\nmid d, \textrm{ and }ad-bc \neq 0
\big\}.
\end{equation}

When $A$ is an affinoid $\QQ_p$-algebra and $\chi: \ZZ_p^\times \to A^\times$ is $m_0$-locally analytic for some $m_0\in \NN$, for every $m\geq \max\{m_0, v(q)\}$, we can consider the \emph{$m$-locally analytic induced representation} 
\[
\Ind_{B(\ZZ_p)}^{\Iw_q}(\chi)^{m,\an} = \Big\{ f \in \Ind_{B(\ZZ_p)}^{\Iw_q}(\chi)\; \big|\; f \textrm{ is an analytic function on }\big(\begin{smallmatrix} 1 & 0 \\ a + p^{m} \ZZ_p& 1\end{smallmatrix} \big),\textrm{ for all } a \in q\ZZ_p\Big\}.
\]
Here analytic function means that the values of the function can be given by a convergent Taylor series on the specified $p$-adic disk. The condition that $\chi$ is $m$-locally analytic is used so that the action \eqref{E:right action on C(N)} is well defined.

Similar to \eqref{E:explicit induced representation}, sending $f$ to $h(z) = f\big(\big( \begin{smallmatrix}
1&0\\qz&1
\end{smallmatrix} \big)\big)$ induces an isomorphism 
\begin{equation}
\label{E:m-loc analytic induced representation}
\Ind_{B(\ZZ_p)}^{\Iw_q}(\chi)^{m,\an} \xrightarrow{\ \cong \ } \calO_{\bfB_{qp^{-m}}} \widehat \otimes_{\QQ_p} A.
\end{equation}
Here the latter space may be understood as the subspace of continuous functions $h \in \calC(\ZZ_p; A)$ such that $h$ is analytic on $a+q^{-1}p^m\ZZ_p$ for all $a \in \ZZ_p$.

We write 
\[
\Ind_{B(\ZZ_p)}^{\Iw_q}(\chi)^{\lan} = \bigcup_{m \geq m_0} \Ind_{B(\ZZ_p)}^{\Iw_q}(\chi)^{m, \an}
\]
for the \emph{locally analytic induced representation}; it is a subspace of $\Ind_{B(\ZZ_p)}^{\Iw_q}(\chi)$ which, under the isomorphism \eqref{E:explicit induced representation}, corresponds to $\calC^\an(\ZZ_p; A)$ consisting of locally analytic functions on $\ZZ_p$ with values in $A$.

When $\chi$ is a classical character $(k, \psi)$ of conductor $p^{m_0} \leq p^m$, $\Ind_{B(\ZZ_p)}^{\Iw_q}(\chi)^{m,\an}$ contains a subspace $\Ind_{B(\ZZ_p)}^{\Iw_q}(\chi)^{m,\alg}$ consisting of those functions
whose restriction to $a+q^{-1}p^{m}\ZZ_p$ is a polynomial function of degree $\leq k$  for all $a \in \ZZ_p$. 
The isomorphism \eqref{E:m-loc analytic induced representation} induces an isomorphism between $\Ind_{B(\ZZ_p)}^{\Iw_q}(\chi)^{m,\alg}$ and the space $
\LP^{m-v(q), \deg \leq k}(\ZZ_p;  E)$ consisting of all functions $h \in \calC (\ZZ_p; E)$ whose restriction to $a+q^{-1}p^{m}\ZZ_p$ is a polynomial function of degree $\leq k$  for all $a \in \ZZ_p$.

\subsection{Buzzard's overconvergent automorphic forms}
\label{SS:Buzzard's construction}
We now recall Buzzard's construction of overconvergent automorphic forms following \cite[Section~5]{buzzard2}.

We fix a definite quaternion algebra $D$ over $\QQ$ which splits at $p$, and
we fix an isomorphism $D \otimes \QQ_p \simeq \rmM_2(\QQ_p)$ and identify them, so that the groups considered in  Subsection~\ref{SS:subgroups of GL2(Zp)} may be viewed as subgroups of $(D \otimes \QQ_p)^\times$.
We fix the tame level structure $K^p$ to be an open compact subgroup of $(D \otimes \AAA_f^{(p)})^\times$. We call $K^p$ \emph{neat} if it satisfies the following condition (see \cite[Section~4]{buzzard2}):
\[
(\textrm{Neat})\qquad 
\textrm{for any }x \in (D \otimes \AAA_f)^\times, \textrm{ we have }x^{-1} D^\times x \cap K^p \Iw_q = \{1\}.
\]
The neat condition is cofinal in the direct system of all tame level structures. For instance, for any $l \geq 3$, the full $l$ level structure is neat. 
 
Let $\chi$ be an $m_0$-locally analytic character of $\ZZ_p^\times$ with values in an affinoid $\QQ_p$-algebra $A$.  
For $m\geq m_0$, Buzzard \cite{buzzard2} defined the space of overconvergent modular forms of weight $\chi$ and radius of convergence $p^{-m}$ (with $m \in \NN$ and $m \geq 2$ if $p=2$) to be
\[
S^{D, \dagger, m}_{\chi}: = \big\{ 
\varphi: D^\times \backslash (D \otimes \AAA_f)^\times / K^p \to \Ind_{B(\ZZ_p)}^{\Iw_q}(\chi)^{m, \an}\; \big|\;\varphi(xu_p) = \varphi(x)|\!|^\chi_{u_p}, \textrm{ for }u_p \in \Iw_q
\big\}.
\]
We put $S^{D, \dagger}_\chi : = \cup_{m\geq m_0} S^{D, \dagger, m}_\chi$.
When $\chi = (k, \psi)$ is a classical character of conductor $p^{m_0}\leq p^m$, $S_\chi^{D, \dagger, m}$ contains the subspace of classical automorphic forms of weight $k+2$:
\[
S^D_{k+2}(K^p\Iw_{p^{m}}; \psi): = \big\{ 
\varphi: D^\times \backslash (D \otimes \AAA_f)^\times / K^p \to \Ind_{B(\ZZ_p)}^{\Iw_q}(\chi)^{m, \alg}\; \big|\;\varphi(xu_p) = \varphi(x)|\!|^\chi_{u_p}, \textrm{ for }u_p \in \Iw_q
\big\}.
\]

\subsection{The $U_p$-operator}
We choose a decomposition of the double coset:
\[
\Iw_q \big(\begin{smallmatrix}
p &0\\0&1
\end{smallmatrix} \big) \Iw_q = \coprod_{j=0}^{p-1} \Iw_q v_j,
\] for example with $v_j = \big(\begin{smallmatrix}
p &0\\jq&1
\end{smallmatrix} \big)$,
and define
\begin{equation}
\label{E:Up operator}
 U_p(\varphi) := \sum_{j=0}^{p-1} \varphi|^{\chi}_{v_j}, \quad \textrm{with }(\varphi|^{\chi}_{v_j})(g): = \varphi(gv_j^{-1})|\!|^{\chi}_{v_j}.
\end{equation}
This definition does not depend on the choice of the coset representatives $v_j$.
The operator $U_p$ naturally acts on $S_\chi^{D, \dagger, m}$, and preserves the subspace $S_{k+2}^D(K^p\Iw_{p^m}; \psi)$ if $\chi$ is a classical character $(k, \psi)$ of conductor $p^{m_0} \leq p^m$.

\begin{remark}
\label{R:Buzzard freedom}
Buzzard \cite{buzzard2} allows variants of the construction by taking equivariant functions under a (smaller) Iwahori subgroup $\Iw_{p^\alpha}$ and with values in $\Ind_{B(\ZZ_p)}^{\Iw_{p^\alpha}}(\chi)^{\beta, \an}$, namely, those functions that are analytic on $a+p^{\beta-\alpha}\ZZ_p$ for all $a \in \ZZ_p$ (under the identification $h(z) = f\big(\big( \begin{smallmatrix}
1&0 \\ p^{\alpha} z & 1
\end{smallmatrix}\big)\big)$), as long as $\alpha \geq 1$ and $\beta \geq \max\{\alpha, m_0, v(q)\}$. But \cite[Lemma~4]{buzzard2} says that the characteristic power series for the $U_p$-action on the space of overconvergent automorphic forms does not depend on this general flexibility.  
In 
\cite{wan-xiao-zhang}, we exploit the construction when $ \alpha =\beta = m_0$, i.e.,  taking equivariant functions under the Iwahori subgroup $\Iw_{p^{m_0}}$ and having the target of $\varphi$ to be $A\langle z\rangle$.
In contrast, we focus on the other extreme $\alpha = v(q)$ and $\beta = m_0$ in this paper: letting the disks become smaller and smaller as $m_0$ goes to infinity, while keeping the setup to take equivariant functions under the $\Iw_q$-action; this is adapted to the goal of finding an integral model.
\end{remark}

\subsection{Integral model of the space of $p$-adic automorphic forms}
\label{SS:naive integral model}
\quash{Following the key idea of Coleman \cite{coleman-halo}, 
we provide an integral model of the space of automorphic forms.}
Consider the universal character $[-]: \ZZ_p^\times \to\Lambda^\times$, viewed as a character of $B(\ZZ_p)$ by composing with $\pi_d$.  (So the coefficient ring will be the Iwasawa algebra $A = \Lambda$.) Define the space of \emph{integral $p$-adic automorphic forms for $D$} to be
\[
S^{D}_{\inte}: = \big\{ 
\varphi: D^\times \backslash (D \otimes \AAA_f)^\times / K^p \to \Ind_{B(\ZZ_p)}^{\Iw_q}([-])\; \big|\;\varphi(xu_p) = \varphi(x)|\!|^{[-]}_{u_p}, \textrm{ for }u_p \in \Iw_q
\big\}.
\]
Since the coefficient ring $\Lambda$ is not a Banach algebra, $S^{D}_\inte$ is not a Banach space in the literal sense. But as shown later in \eqref{E:explicit integral model} and Proposition~\ref{P:Up operator agrees}, at least when the tame level is neat, $S^{D}_\inte$ is the completion of an (countably) infinite direct sum of $\Lambda$'s.
Moreover, the topological space $S^{D}_\inte$ carries a continuous action of $U_p$, defined using the same formula~\eqref{E:Up operator}.  (Here and later, we do not discuss the tame Hecke operators as we will not use them. Clearly, this space carries a natural action of the tame Hecke algebra; see for example \cite[\S 3.4]{wan-xiao-zhang}.)

Note that $\Lambda$ (or $\Lambda[\frac 1p]$) is not an affinoid algebra, one needs to pass over an affinoid subdomain of $\calW$ to compare $S^{D}_\inte$ with the usual space of overconvergent automorphic forms.
More precisely, for $m_0 \in \NN_{\geq4}$, we write $\calW_{m_0} : = \calW^{\leq p^{-1/p^{m_0-4}}}$ and use $[-]_{m_0}: \ZZ_p^\times \to \calO_{\calW_{m_0}}^\times$ to denote the induced universal character (the radius here is not optimal), which is $m_0$-locally analytic. 
Then $S^{D}_\inte \widehat \otimes_\Lambda \calO_{\calW_{m_0}}$ contains $S^{D,\dagger}_{[-]_{m_0}}=\cup_{m\geq m_0}S^{D,\dagger,  m}_{[-]_{m_0}}$ and hence $S^{D,\dagger,  m_0}_{[-]_{m_0}}$ as subspaces.

\begin{remark}\label{R:Caveat}
In his thesis \cite{gouvea}, F. Gouv\^ea proved that $U_p$ is not a compact operator on the space of $p$-adic modular forms. Since $S^{D}_\inte$ is an integral model of the space of $p$-adic automorphic forms for $D$, it is very likely that the $U_p$-action on $S^{D}_\inte$ is not compact in the sense of Definition~\ref{D:compact operator}. However, a key observation of this work is that one can still define the characteristic power series for the $U_p$-action with respect to a carefully chosen orthonormal basis of $S^{D}_\inte$. See Theorem \ref{T:HP lower bound} and the discussion in Section~\ref{Sec:integral model}.
\end{remark}

\begin{remark}
\label{R:relation to Emerton}
The $\Lambda$-dual space of $S^{D}_\inte$ may be identified with $P(\ZZ_p) = \big(\begin{smallmatrix} \ZZ_p^\times & \ZZ_p \\ 0& 1\end{smallmatrix} \big)$-coinvariants of Emerton's completed homology of the Shimura variety associated to $D^\times$:
\[
\tilde S^D: = \Big( \varprojlim_{m \to \infty} \ZZ_p\big[D^\times \backslash (D \otimes \AAA_f)^\times) / K^pK_{p,m} \big] \Big)_{P(\ZZ_p)},
\]
where $K_{p,m} : = \big(1+ p^m \rmM_2(\ZZ_p) \big)^\times$. (Rigorously speaking, the $U_p$-action on both sides might be normalized slightly differently; this is because we chose to follow the convention of Buzzard \cite{buzzard2}.)
More naturally, we should have taken the invariants under $N(\ZZ_p)$, which would lead to a theory of eigensurface over $\calW \times \calW$ ``homogeneous" along one factor of $\calW$. But it is custom to simplify the picture by taking a ``slice" of the eigensurface to study the eigencurve.

For general algebraic groups $G$ over $\QQ$ which is quasi-split at $p$ and compact modulo center at the archimedean place, one can construct the corresponding integral model by taking the coinvariants of the completed homology of the Shimura variety under the unipotent subgroup of the chosen Borel subgroup at $p$, or equivalently consider a similar induced representation. These two viewpoints are essentially the same, as explained in \cite[Section~3.10]{loeffler}.
\quash{We expect that Caveat~\ref{Caveat} still presents a problem, which we hope can be handled using a construction similar to that in Section~\ref{Sec:integral model}.}
More generally, it might be possible to extend this construction to general $G$ by looking at the completed homology groups of the associated locally symmetric space.

We also point out that our construction is closely tied to the \'etale (or Betti) realization of the eigencurve or the corresponding automorphic forms.
One can also realize the space of automorphic forms using their de Rham realization, as in the original construction of Coleman--Mazur \cite{coleman-mazur} and Andreatta--Iovita--Pilloni--Stevens \cite{AIS,pilloni}.
 In the recent work of Andreatta, Iovita, and Pilloni \cite{AIP}, they constructed some integral model of the space of overconvergent Hilbert modular forms, developing an idea of Coleman \cite{coleman-halo} (see also Remark \ref{R:final}). \end{remark}

\begin{hypothesis}
\label{H:neat}
For simplicity, from now on we assume that $K^p$ is neat. But we shall insert discussions throughout on reducing the argument from the general case to the neat case.
\end{hypothesis}

\subsection{Explicit presentation of (overconvergent) automorphic forms}
\label{SS:explicit direct sum presentation} One can give an explicit description of the space of (overconvergent) automorphic forms.
For this,  we decompose $(D \otimes \AAA_f)^\times$ into (a disjoint union of) double cosets $\coprod_{i=0}^{t-1} D^\times \gamma_i K^p \Iw_q$, for some elements $\gamma_0,\gamma_1, \dots, \gamma_{t-1} \in (D \otimes \AAA_f)^\times$.
 The condition (Neat) implies that the natural map $D^\times \times K^p \Iw_q \to D^\times \gamma_i K^p\Iw_q$ for each $i$ sending $(\delta,u)$ to $\delta\gamma_iu$ is bijective. 
Since $D^\times$ is dense in $(D \otimes \QQ_p)^\times$, we may and will take each $\gamma_i$ so that its $p$-component $\gamma_{i,p}$ is just $1$.\footnote{It was pointed out to us by  Emerton and  Ren (independently) that the assumption $\gamma_{i,p}=1$ is not essentially needed to prove Proposition~\ref{P:explicit Up}(3) later.}

Evaluating each function $\varphi$ at these chosen $\gamma_i$'s, we have an explicit description of various spaces of automorphic forms:
\begin{equation}
\label{E:explicit integral model}
\xymatrix@R=0pt{
S^{D}_\inte \ar[r]^-\cong & \bigoplus_{i=0}^{t-1} \calC(\ZZ_p; \Lambda) 
\\ S^{D, \dagger, m}_{\chi} \ar[r]^-\cong & \bigoplus_{i=0}^{t-1} \calO_{\bfB_{qp^{-m}}} \widehat \otimes_{\QQ_p} A
\\
\varphi \ar@{|->}[r] & \big( \varphi(\gamma_i) \big)_{i =0, \dots, t-1},
}
\end{equation}
where $\chi: \ZZ_p^\times  \to A^\times$ is an $m$-analytic character of $\ZZ_p^\times$ with values in an affinoid $\QQ_p$-algebra $A$.

\subsection{Characteristic power series}
\label{SS:char power series}
From the explicit presentation \eqref{E:explicit integral model}, we see that $S_\chi^{D, \dagger, m}$ is an orthonormalizable Banach $A$-module, i.e. there exists an orthonormal basis $(e_i)_{i\in \ZZ_{\geq 0}}$ such that $S_\chi^{D, \dagger, m} \cong \widehat \oplus_{i \geq 0} A e_i$.
With respect to this basis, the action of $U_p$ is given by an infinite matrix, say $P$.
Moreover, the action of $U_p$ is compact (see e.g. \cite[Lemma~12.2]{buzzard}), namely, it is a uniform limit of $A$-linear operators whose images are finite $A$-modules.
We define the \emph{characteristic power series} of the $U_p$-action on $S_\chi^{D, \dagger, m}$ to be
\[
\Char(U_p; S_\chi^{D, \dagger, m}): = \det (I_\infty - XP) \in A\llbracket X\rrbracket.
\]
This power series converges and does not depend on the choice of the orthonormal basis $(e_i)_{i \in \ZZ_{\geq 0}}$.

\begin{definition}
For $r \in (0,1) \cap p^\QQ$, let $[-]_{\leq r}: \ZZ_p^\times \to \calO_{\calW^{\leq r}}^\times$ denote the universal character.
Choose $m \in \NN$ such that $r < p^{-q/p^{m}(p-1)}$ so that the universal character is $m$-locally analytic.
The \emph{spectral curve} $\Spc_D^{\leq r}$ over $\calW^{\leq r}$ is defined to be the (scheme theoretic) zero locus of the characteristic power series $\Char(U_p; S_{[-]_{\leq r}}^{D, \dagger, m})$ inside $\calW^{\leq r} \times \GG_m^\rig$.
By \cite[Lemma~4]{buzzard2}, $\Char(U_p; S_{[-]_{\leq r}}^{D, \dagger, m})$ and hence $\Spc_D^{\leq r}$ does not depend on the choice of $m$, and it is compatible as $r$ varies.
We put $\Spc_D: = \cup_{r \in (0,1)} \Spc_D^{\leq r}$ to be the \emph{spectral curve} with tame level $K^p$.
The natural projection wt to weight space $\calW$ is called the \emph{weight map}.  The composition of $x \mapsto x^{-1}$ on $\GG_m^\rig$ and the natural projection $\Spc_D \to \GG_m^\rig$ is called the \emph{slope map} and denoted by $a_p: \Spc_D \to \GG_m^\rig$.
\end{definition}

\begin{remark}\label{R:non-neat}
One may reduce the non-neat case to the neat case as follows. Choose a split prime $l \geq 3$ of $D$ different from $p$, and intersect $K^p$ with the full level $l$ structure. The spectral curve for this new tame level structure (which is neat) is endowed with a $\GL_2(\ZZ/l\ZZ)$-action which commutes with the weight map. Since $|\GL_2(\ZZ/l\ZZ)|$ is invertible on weight space, the spectral curve for $K^p$ is a union of connected components of the spectral curve for this new tame level. 
\end{remark}

We record the following classicality result for later use.  It establishes a basic link between the classical theory of automorphic forms and the theory of the overconvergent ones.
\begin{proposition}[{\cite[Proposition~4]{buzzard2}}]
\label{P:classicality}
Let $\chi = (k, \psi): \ZZ_p^\times \to E^\times$ be a classical character of conductor $p^m$.
Let $0 \neq \varphi \in S^{D, \dagger}_\chi$ be an eigenvector for $U_p$ with non-zero eigenvalue $\lambda$.
\begin{itemize}
\item
If $v(\lambda)<k+1$, then $\varphi$ is classical, i.e. $\varphi \in S_{k+2}^D(K^p\Iw_{p^m}; \psi)$.
\item
If $v(\lambda)>k+1$, then $\varphi \notin S_{k+2}^D(K^p\Iw_{p^m}; \psi)$.
\end{itemize}

\end{proposition}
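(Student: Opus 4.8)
The statement I need to prove is the classicality criterion of Buzzard: for a classical character $\chi = (k,\psi)$ of conductor $p^m$, a $U_p$-eigenform $0 \neq \varphi \in S^{D,\dagger}_\chi$ with eigenvalue $\lambda$ satisfying $v(\lambda) < k+1$ is already classical (lies in $S^D_{k+2}(K^p\Iw_{p^m};\psi)$), and if $v(\lambda) > k+1$ then $\varphi$ is not classical.

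Let me think about how I would prove this.

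The overconvergent forms take values in $\Ind_{B(\ZZ_p)}^{\Iw_q}(\chi)^{m,\an}$, which under the isomorphism \eqref{E:m-loc analytic induced representation} is $\calO_{\bfB_{qp^{-m}}}\widehat\otimes_{\QQ_p}E$ — functions analytic on residue disks of radius $q^{-1}p^m$. Inside sits the algebraic piece $\Ind_{B(\ZZ_p)}^{\Iw_q}(\chi)^{m,\alg}$, i.e. $\LP^{m-v(q),\deg\leq k}(\ZZ_p;E)$: functions that restrict to a polynomial of degree $\leq k$ on each such disk. Classical forms are those whose values land in this subspace. So the heart of the matter is an analysis of the $U_p$-operator on the quotient

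$$\Ind_{B(\ZZ_p)}^{\Iw_q}(\chi)^{m,\an}/\Ind_{B(\ZZ_p)}^{\Iw_q}(\chi)^{m,\alg}.$$

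The key structural fact is that the subspace of algebraic functions is $\bfM_1$-stable (it is cut out by the $(k+1)$-st derivative vanishing, a condition respected by the weight-$k$ action of the relevant monoid), hence $U_p$ descends to the quotient. One then computes the effect of a single coset representative $v_j = \big(\begin{smallmatrix}p&0\\jq&1\end{smallmatrix}\big)$ on the Taylor expansion on a residue disk: the substitution $z \mapsto \frac{pz}{jqz+1}$ contracts the disk of radius $q^{-1}p^m$ into one of radius $q^{-1}p^{m+1}$, so composing with the character factor $\chi(jqz+1)$ and re-expanding, the coefficient of $z^n$ for $n > k$ picks up an extra factor of $p^{n}$ relative to what it was, roughly. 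More precisely, on the quotient by the $\deg\leq k$ part, $U_p$ acts with all "eigenvalue slopes" $\geq k+1$: the operator $p^{-(k+1)}U_p$ is still compact and moreover has operator norm $\leq 1$ after suitable normalization. This is the classical "small slope" argument of Coleman, transplanted by Buzzard to the quaternionic setting.

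The steps, in order, are: (1) identify $S^{D,\dagger,m}_\chi$ via \eqref{E:explicit integral model} with $\bigoplus_{i=0}^{t-1}\calO_{\bfB_{qp^{-m}}}\widehat\otimes E$ and the classical subspace with the span of polynomials of degree $\leq k$ on each residue disk; (2) verify $U_p$ preserves both, hence acts on the quotient $Q$; (3) show by the explicit disk-contraction computation above that every slope of $U_p$ acting on $Q$ is $> k+1$ (in fact $\geq k+1$, with the equality case handled separately — this is where the strict inequality in the hypothesis enters); (4) conclude: if $v(\lambda) < k+1$, the image of $\varphi$ in $Q$ is a $\lambda$-eigenvector with a forbidden slope, hence is zero, so $\varphi$ is classical; conversely if $v(\lambda) > k+1$ and $\varphi$ were classical, it would be a finite-slope eigenform in a finite-dimensional space $S^D_{k+2}(K^p\Iw_{p^m};\psi)$, and one checks the $U_p$-eigenvalues there all have slope $\leq k+1$ (by Atkin--Lehner/old-new theory, or directly: the $U_p$-operator on the classical space is "integral" in a way that bounds slopes by $k+1$), a contradiction.

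The main obstacle is step (3): carrying out the $p$-adic estimate on the quotient $Q$ cleanly. One has to track how the Taylor coefficients transform under $z\mapsto \frac{pz}{jqz+1}$ together with the character twist $\chi(jqz+1) = (jqz+1)^k\psi(jqz+1)$, sum over $j=0,\dots,p-1$, and show that modulo the algebraic part every output coefficient is divisible by $p^{k+1}$ in the appropriate normalization — the subtlety being the boundary behavior at conductor-$p^m$ level and the fact that $\psi$ is only locally constant (it is constant on residue disks of radius $p^{-m+1} \supseteq q^{-1}p^m$ when $m\geq v(q)$, which is exactly why $m\geq m_0$ and $m\geq v(q)$ are imposed). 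Since this proposition is quoted verbatim from \cite[Proposition~4]{buzzard2}, in practice I would simply cite Buzzard; but the self-contained argument is the one sketched above, and the delicate point is precisely the slope-$(k+1)$ bound on the quotient and the complementary slope bound on the classical space.
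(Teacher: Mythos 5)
The paper itself offers no proof of this proposition; it is quoted verbatim from Buzzard's paper \cite{buzzard2} and used as a black box, as you note at the end of your proposal. So there is no ``paper's own proof'' to compare against, but your reconstruction is worth assessing on its own terms.

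Your outline is essentially the correct (and standard) one, following Coleman's original small-slope argument as transplanted by Buzzard. Two remarks. First, your step (3) --- the heart of the matter --- is phrased as an explicit disk-contraction estimate on Taylor coefficients, which you yourself flag as delicate. The cleaner way to package exactly this estimate, and the one implicit in the paper's own later usage (see Step III of the proof of Theorem~\ref{T:main theorem}, citing \cite{jone}), is the exact sequence
\[
0 \to S^D_{k+2}(K^p\Iw_{p^m}; \psi) \to S^{D, \dagger}_{(k,\psi)} \xrightarrow{(d/dz)^{k+1}} S^{D, \dagger}_{(-k-2, \psi)} \to 0,
\]
equivariant for $U_p$ on the left two terms and $p^{k+1}U_p$ on the right. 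Since $U_p$ preserves the integral structure on the overconvergent space $S^{D,\dagger}_{(-k-2,\psi)}$, its eigenvalue slopes there are $\geq 0$; hence a $U_p$-eigenvector in the middle term with slope $< k+1$ must die under $(d/dz)^{k+1}$, i.e.\ must be classical. This replaces your somewhat hand-wavy ``picks up an extra factor of $p^n$'' computation by a one-line integrality observation. Second, your step (4) converse is fine: on the finite-dimensional classical space, Atkin--Lehner (Proposition~\ref{P:atkin-lehner} of this paper) pairs slopes to sum to $k+1$, and since slopes are nonnegative each is $\leq k+1$, so a slope $> k+1$ eigenform cannot be classical. The parenthetical remark that ``the equality case is handled separately'' is a red herring --- at $v(\lambda) = k+1$ the proposition asserts nothing, and no separate handling is needed; the two strict inequalities in the hypothesis partition exactly the cases the exact-sequence argument and the Atkin--Lehner bound respectively resolve.
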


\subsection{One-variable $p$-adic analysis}
\label{SS:Mahler expansion I}
Before proceeding, we need some one-variable $p$-adic analysis, as developed by P. Colmez \cite{colmez}.

Recall that $\calC(\ZZ_p;\ZZ_p)$ carries a supremum norm: 
\[
\textrm{ for }f \in\calC(\ZZ_p; \ZZ_p), \ |f|_{\ZZ_p}: =\max_{z \in \ZZ_p} |f(z)|.
\]
The functions $1, z, \dots, \binom zn, \dots$ form an orthonormal basis of $\calC(\ZZ_p;\ZZ_p)$, called the \emph{Mahler basis}. In other words, any $f \in \calC(\ZZ_p;\ZZ_p)$ admits a \emph{Mahler expansion}:
\begin{equation}
\label{E:f(z)}
f(z) = \sum_{n \geq 0} a_n \binom zn, \textrm{ where all }a_n \in \ZZ_p;\textrm{ and }\lim_{n \to \infty} |a_n| =0.
\end{equation}

For $m \geq v(q)$, recall that $\calO_{\bfB_{qp^{-m}}}$ is the  subspace of $\calC(\ZZ_p; \QQ_p)$ consisting of those continuous functions that admits a local Taylor expansion over $a+q^{-1}p^m\ZZ_p$ for each $a \in \ZZ_p$.
It carries a natural norm
\[
\textrm{ for }f \in\calO_{\bfB_{qp^{-m}}}, \ |f|_{{qp^{-m}}, \an}: =\max_{z \in \bfB_{qp^{-m}}(\CC_p)} |f(z)|.
\]
By \cite[Th\'eor\`em~1.29]{colmez}, the functions
\[
\Big\lfloor \frac n {q^{-1}p^m} \Big\rfloor !\cdot  \binom zn; \quad n \in \ZZ_{\geq 0}
\]
form an orthonormal basis of $\calO_{\bfB_{qp^{-m}}}$ for the norm $|\cdot|_{{qp^{-m}}, \an}$.
Note that this orthonormal basis is not the one we commonly use in the
context of studying $S^{D, \dagger,m}_{[-]_m}$ (e.g. in \cite{buzzard2} and
in \cite{wan-xiao-zhang}).

In the rest of the paper, we always equip $\Lambda\llbracket X\rrbracket$ with the $\gothm_\Lambda$-adic topology.

\begin{proposition}
\label{P:Up operator agrees}
Using the isomorphism \eqref{E:explicit integral model}, the space $S^{D}_\inte$ admits an orthonormal basis (over $\Lambda$) given by 
\[
1_0, \dots, 1_{t-1}, z_0, \dots, z_{t-1}, \tbinom {z_0}2, \dots, \tbinom{z_i}{2}, \dots,
\]
where the subscript $i$ indicates that the term comes from the $i$th direct summand.
Let $P = (P_{m,n})_{m,n \in \ZZ_{\geq 0}}$ denote the corresponding infinite matrix for the $U_p$-action, with coefficients in $\Lambda$.
Suppose that the (uniform) limit of power series
\[
\Char(P): = \det(I_\infty - XP) = \lim_{n \to \infty} \det \Big(I_n - X (P_{i,j}\big)_{i,j =0, \dots, n-1}\Big) \in \Lambda \llbracket X\rrbracket
\]
exists (which we shall prove in Theorem~\ref{T:HP lower bound}).
Then it agrees with the characteristic power series of the $U_p$-action on each $S^{D, \dagger, m}_{[-]_m}$.
\end{proposition}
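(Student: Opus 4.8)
The plan is to compare the abstract $U_p$-matrix $P$ on $S^D_\inte$ (with respect to the Mahler-type orthonormal basis) with the $U_p$-matrix on the Banach module $S^{D,\dagger,m}_{[-]_m}$ over the affinoid $\calW_{m}$, via the base-change map $S^D_\inte \widehat\otimes_\Lambda \calO_{\calW_m} \hookleftarrow S^{D,\dagger,m}_{[-]_m}$ discussed in Subsection~\ref{SS:naive integral model}. The key point is that both sides are made completely explicit by the isomorphisms in \eqref{E:explicit integral model}: on the integral side we get $\bigoplus_{i=0}^{t-1}\calC(\ZZ_p;\Lambda)$ with the Mahler basis $\binom{z_i}{n}$, while on the overconvergent side we get $\bigoplus_{i=0}^{t-1}\calO_{\bfB_{qp^{-m}}}\widehat\otimes_{\QQ_p}\calO_{\calW_m}$ with Colmez's orthonormal basis $\lfloor n/(q^{-1}p^m)\rfloor!\cdot\binom{z_i}{n}$ from Subsection~\ref{SS:Mahler expansion I}. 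First I would observe that both bases are ``the same up to rescaling'' by the diagonal factors $d_n := \lfloor n/(q^{-1}p^m)\rfloor!$, so if $P$ denotes the matrix of $U_p$ on $S^D_\inte$ and $P^{(m)}$ denotes its matrix on $S^{D,\dagger,m}_{[-]_m}$ in the Colmez basis, then, after base change $\Lambda\to\calO_{\calW_m}$, one has $P^{(m)}_{i,j} = (d_i/d_j)\cdot(\text{image of }P_{i,j})$; crucially the $U_p$-action is defined by the very same formula \eqref{E:Up operator} on both spaces, so the only change is the basis rescaling.

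The second step is to show that the conjugation by the diagonal matrix $\Diag(d_0,d_1,\dots)$ does not affect the (Fredholm) determinant. Concretely, for each truncation $n$ one has
\[
\det\!\Big(I_n - X\,(P^{(m)}_{i,j})_{i,j<n}\Big) = \det\!\Big(I_n - X\,\Diag(d_i)\,(P_{i,j})_{i,j<n}\,\Diag(d_i)^{-1}\Big) = \det\!\Big(I_n - X\,(P_{i,j})_{i,j<n}\Big),
\]
since conjugation by an invertible matrix (here $\Diag(d_i)_{i<n}$ over the fraction field, which is harmless since the identity of determinants is a polynomial identity valid over $\Lambda[\tfrac1p]$) preserves the characteristic polynomial. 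Passing to the limit $n\to\infty$, the left-hand limit converges to $\Char(U_p;S^{D,\dagger,m}_{[-]_m})$ because $U_p$ is compact on this orthonormalizable Banach module (Subsection~\ref{SS:char power series}), while the right-hand limit converges to $\Char(P)$ by hypothesis; hence the two agree after base change to $\calO_{\calW_m}$. Finally, since $\calW_{m_0}\subset\calW_{m_0+1}\subset\cdots$ is a cofinal chain of affinoid subdomains of $\calW$ whose union is all of $\calW$ (equivalently, $\Lambda\to\prod_{m}\calO_{\calW_m}$ is injective, as $\Lambda$ injects into the ring of functions on weight space), an identity of elements of $\Lambda\llbracket X\rrbracket$ that holds after base change to every $\calO_{\calW_m}$ holds in $\Lambda\llbracket X\rrbracket$; combined with the independence of $\Char(U_p;S^{D,\dagger,m}_{[-]_m})$ on $m$ from \cite[Lemma~4]{buzzard2} (which is compatible with the base-change maps), this gives the claimed equality.

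\textbf{Main obstacle.} The principal subtlety is the interchange of limits and the precise meaning of ``agrees with'': $\Char(P)$ is defined as an $\gothm_\Lambda$-adic limit in $\Lambda\llbracket X\rrbracket$, whereas $\Char(U_p;S^{D,\dagger,m}_{[-]_m})$ is a limit in the Banach-space sense in $\calO_{\calW_m}\llbracket X\rrbracket$; one must check these two limits are compatible under base change $\Lambda\to\calO_{\calW_m}$, i.e. that the continuous ring map $\Lambda\llbracket X\rrbracket\to\calO_{\calW_m}\llbracket X\rrbracket$ sends the $\gothm_\Lambda$-adic limit to the correct Banach limit. This is where the estimate $P_{i,j}\in\gothm_\Lambda^{\max\{0,\lfloor i/t\rfloor-\lfloor j/pt\rfloor\}}$ (to be proved in Theorem~\ref{T:HP lower bound}) does the real work: it simultaneously guarantees that the truncated determinants form a Cauchy sequence for the $\gothm_\Lambda$-adic topology \emph{and} that their images are Cauchy in $\calO_{\calW_m}\llbracket X\rrbracket$ with matching limit, because on $\calW_m$ the ideal $\gothm_\Lambda$ maps into an ideal topologically nilpotent in the relevant sense. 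A secondary, more bookkeeping-level point is to verify carefully that the base-change inclusion $S^{D,\dagger,m}_{[-]_m}\hookrightarrow S^D_\inte\widehat\otimes_\Lambda\calO_{\calW_m}$ identifies the Colmez basis with the rescaled Mahler basis exactly (including the factor $d_n$ and not some $p$-power twist of it), which is precisely the content of \cite[Th\'eor\`eme~1.29]{colmez} as recalled above; once this is pinned down the rest is the determinant-conjugation identity.
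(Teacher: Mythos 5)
Your proposal is correct and takes essentially the same route as the paper: identify Colmez's orthonormal basis $d_n\binom{z_i}{n}$ of $S^{D,\dagger,m}_{[-]_m}$ as the factorial rescaling of the Mahler basis of $S^D_\inte$, observe that the two $U_p$-matrices are diagonal conjugates so the truncated characteristic polynomials coincide, and then pass to the limit. The paper states this more tersely; your extra care about the conjugation being a polynomial identity over $\Lambda[\tfrac1p]$ and about the compatibility of the $\gothm_\Lambda$-adic and Banach limits under base change is a legitimate elaboration of points the paper leaves implicit, not a different argument.
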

\begin{proof}
By \cite[Th\'eor\`em~1.29]{colmez} we cited above, the functions
\[
\Big\lfloor\frac n{q^{-1}p^m}\Big\rfloor!\cdot \binom{z_i}n \textrm{ for }i =0, \dots, t-1 \textrm{ and }n \in \ZZ_{\geq 0}
\]
form an orthonormal basis of $S^{D, \dagger, m}_{[-]_m}$.
If $P'$ denotes the infinite matrix of $U_p$-action on this basis, then
$P$ and $P'$ are conjugated by an infinite diagonal matrix with diagonal entries
\[
\underbrace{1, \dots, 1}_t, \underbrace{1, \dots, 1}_t, \dots, \underbrace{\lfloor \tfrac n{q^{-1}p^m}\rfloor !, \dots, \lfloor \tfrac n{q^{-1}p^m}\rfloor !}_{t}, \dots
\]
So taking the limit of the characteristic polynomial of the first $r\times r$-minors, as $r$ goes to infinity gives
\[
\Char(U_p; S^{D, \dagger, m}_{[-]_m}) = \det(I_\infty - XP') = \det(I_\infty- XP),
\]
provided that the latter is well defined.
\end{proof}

\begin{remark}
Once again, we point out that the definition of $\Char (P)$ depends on the choice of the orthonormal basis; in particular, it a priori depends on the choice of the coset representatives (see Remark~\ref{R:Up almost compact}), if we had not shown that it agrees with the characteristic power series $\Char(U_p; S_{[-]_m}^{D, \dagger, m})$. See Section~\ref{Sec:integral model} for more discussion.
\end{remark}

\section{Estimation of the Newton polygon}
\label{Sec:Estimation of NP}

The advantage of working with a definite quaternion algebra is that the action of the $U_p$-operators may be written in a relatively explicit form.
This was first observed by Buzzard and carried out by his student Jacobs \cite{jacobs} (in one example), and later carefully optimized by the second and third authors and Zhang in \cite{wan-xiao-zhang}.

In this section, we will first revisit this explicit presentation of the $U_p$-operator.  Then we give an estimate of the explicit formula for the $U_p$-operator and provide a lower bound of the Newton polygon for the $U_p$-action that is valid over weight space $\calW^{>1/p}$. Luckily, this lower bound agrees with the actual Newton polygon at infinitely many points.
Theorem~\ref{T:main theorem} follows from a careful analysis of the Newton polygon at these points, as proved at the end of this section.

\begin{proposition}
\label{P:explicit Up}
In terms of the isomorphism~\eqref{E:explicit integral model}, the $U_p$-operator on $S^{D}_\inte$ can be described by the following commutative diagram.
\[
\xymatrix@C=50pt{
S^{D}_\inte\ar[r]^-{\varphi \mapsto (\varphi(\gamma_i))} \ar[d]_{\varphi \mapsto U_p\varphi} &
\oplus_{i=0}^{t-1} \calC(\ZZ_p; \Lambda) \ar[d]^{\gothU_p}
\\
S^{D}_\inte\ar[r]^-{\varphi \mapsto (\varphi(\gamma_i))} &
\oplus_{i=0}^{t-1} \calC(\ZZ_p; \Lambda).
}
\]
Here the right vertical arrow $\gothU_p$ is given by a $t\times t$ matrix with the following description.
\begin{itemize}
\item[(1)] Each entry of $\gothU_p$ is a sum of operators of the form $|\!|^{[-]}_{\delta_p}$, where $\delta_p$ is the $p$-component of a \emph{global element $\delta \in D^\times$}.
\item[(2)] There are exactly $p$ such operators appearing in each row and each column  of $\gothU_p$.
\item[(3)] Each $\delta_p$ appearing above belongs to $
 \big(\begin{smallmatrix} p\ZZ_p& \ZZ_p\\q\ZZ_p&\ZZ_p^\times \end{smallmatrix}\big)
$.
\end{itemize}
\end{proposition}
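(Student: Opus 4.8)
The strategy is to unwind the definition of $U_p$ in \eqref{E:Up operator} against the double coset decomposition $(D\otimes\AAA_f)^\times = \coprod_{i=0}^{t-1} D^\times\gamma_i K^p\Iw_q$ and then chase where each coset representative $v_j^{-1}\gamma_i$ lands. First I would compute $U_p\varphi$ at a chosen $\gamma_i$: by \eqref{E:Up operator} we have $(U_p\varphi)(\gamma_i) = \sum_{j=0}^{p-1} \varphi(\gamma_i v_j^{-1})\|^{[-]}_{v_j}$, so the task is to rewrite each $\gamma_i v_j^{-1}$ in the form $\delta^{-1}\gamma_{i'} u$ with $\delta\in D^\times$, $u\in K^p\Iw_q$, using the bijection guaranteed by the (Neat) condition. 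Since we have arranged $\gamma_{i,p}=1$, the $p$-component of $\gamma_i v_j^{-1}$ is just $v_j^{-1} = \big(\begin{smallmatrix} p&0\\ jq&1\end{smallmatrix}\big)^{-1}$, which has denominators; the point of passing to a \emph{global} $\delta$ is precisely that $\varphi$ is left $D^\times$-invariant, so $\varphi(\gamma_i v_j^{-1}) = \varphi(\delta^{-1}\gamma_{i'} u) = \varphi(\gamma_{i'})\|^{[-]}_{u_p}$, and now $u_p = \delta_p\gamma_i v_j^{-1}\gamma_{i'}^{-1} = \delta_p v_j^{-1}$ is the $p$-component of a global element. Feeding this back in gives $(U_p\varphi)(\gamma_i) = \sum_{j} \varphi(\gamma_{i'(i,j)}) \|^{[-]}_{\delta_p^{(i,j)} v_j^{-1}}\|^{[-]}_{v_j} = \sum_j \varphi(\gamma_{i'(i,j)})\|^{[-]}_{\delta_p^{(i,j)}}$, where I have used that $\|^{[-]}_{-}$ is a right action of the monoid $\bfM_1$ (or rather its $*$-twisted version) so the $v_j$'s cancel. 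This gives item (1) and exhibits $\gothU_p$ as a $t\times t$ matrix whose $(i',i)$ entry is $\sum_{j:\, i'(i,j)=i'} \|^{[-]}_{\delta_p^{(i,j)}}$.

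For item (2): the $p$ operators in the $i$th column are indexed by $j=0,\dots,p-1$, so there are exactly $p$ terms per column by construction. For the rows, the cleanest argument is a counting/bijection argument: the transpose statement corresponds to decomposing the double coset the other way, i.e.\ to the fact that $\Iw_q\big(\begin{smallmatrix}1&0\\0&p\end{smallmatrix}\big)\Iw_q$ — equivalently the adjoint $U_p^*$ — also has $p$ single cosets, and the pairing between the $\gamma_i$'s under left and right multiplication by these representatives is a bijection on the relevant index sets; alternatively one invokes that $U_p$ and its ``transpose'' $U_p^t$ are both given by $p$-term coset sums and the matrix $\gothU_p$ is (up to the anti-involution) self-dual, so row sums and column sums of the number of terms agree. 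I would present this via the anti-involution \eqref{E:anti-convolution}: $\big(\begin{smallmatrix}p&0\\jq&1\end{smallmatrix}\big)^* = \big(\begin{smallmatrix}p&j\\0&1\end{smallmatrix}\big)$ and a symmetric double-coset count, mirroring \cite[Section~3]{wan-xiao-zhang}.

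For item (3): this is the local computation at $p$. Each $\delta_p^{(i,j)}$ has the form $\delta_p = u_p v_j$ where $u_p \in \Iw_q$ and $v_j = \big(\begin{smallmatrix}p&0\\jq&1\end{smallmatrix}\big)$ (reading the cancellation above in the opposite order, i.e.\ $\delta_p = \gamma_{i,p}^{-1}(\delta^{-1})^{-1}_p = $ the $p$-part recovered from $u_p v_j$ with $u_p$ the genuine Iwahori factor). Since $\Iw_q = \big(\begin{smallmatrix}\ZZ_p^\times&\ZZ_p\\ q\ZZ_p&\ZZ_p^\times\end{smallmatrix}\big)$, a direct matrix multiplication $\big(\begin{smallmatrix}\ZZ_p^\times&\ZZ_p\\q\ZZ_p&\ZZ_p^\times\end{smallmatrix}\big)\big(\begin{smallmatrix}p&0\\jq&1\end{smallmatrix}\big) \subseteq \big(\begin{smallmatrix}p\ZZ_p&\ZZ_p\\q\ZZ_p&\ZZ_p^\times\end{smallmatrix}\big)$ gives the claim: the $(1,1)$ entry lands in $p\ZZ_p^\times + \ZZ_p\cdot jq \subseteq p\ZZ_p$ (using $q\in p\ZZ_p$), the $(2,1)$ entry in $q\ZZ_p\cdot p + \ZZ_p^\times\cdot jq \subseteq q\ZZ_p$, the $(1,2)$ entry stays in $\ZZ_p$, and the $(2,2)$ entry is a $\ZZ_p^\times$ element plus something in $q\ZZ_p\subseteq p\ZZ_p$, hence still in $\ZZ_p^\times$.

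The main obstacle is the bookkeeping in the first paragraph: carefully checking that the (Neat) hypothesis gives a \emph{well-defined} assignment $(i,j)\mapsto (i'(i,j), \delta^{(i,j)})$ with $\delta^{(i,j)}\in D^\times$, that the resulting operator is independent of the choice of coset representatives $v_j$ (already asserted after \eqref{E:Up operator}, but it must be compatible with this rewriting), and that the $v_j$-cancellation is legitimate given that $\|^{[-]}$ was only defined as a right action of $\Iw_q$ and then extended to the monoid $\bfM_1$ — one must verify $\delta_p^{(i,j)}\in \bfM_1$ and that $v_j^{-1}v_j$ composition makes sense. Everything else is a finite explicit matrix manipulation.
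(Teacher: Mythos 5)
Your proposal is correct and takes essentially the same route as the paper: unwind $(U_p\varphi)(\gamma_i)$ via the (Neat)-guaranteed unique factorization $\gamma_i v_j^{-1}=\delta^{-1}\gamma_{i'}u$, use left $D^\times$-invariance and right $K^p\Iw_q$-equivariance to land on $\varphi(\gamma_{i'})|\!|^{[-]}_{u_p v_j}$ with $u_p v_j=\delta_p$, and then read off $\delta_p\in\Iw_q\big(\begin{smallmatrix}p&0\\0&1\end{smallmatrix}\big)\Iw_q\subseteq\big(\begin{smallmatrix}p\ZZ_p&\ZZ_p\\q\ZZ_p&\ZZ_p^\times\end{smallmatrix}\big)$ by the same matrix-shape check. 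One small remark: for item (2) the paper itself offers no argument (the column count is by construction, the row count is implicitly taken as standard), so your sketch of the row-count bijection via the anti-involution, while informal, is already more than what the paper records and is not a point of disagreement.
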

\begin{proof}
We reproduce the proof from \cite[Proposition~4.4]{wan-xiao-zhang} to make this paper more self-contained.
For each $\gamma_i$, we have
\[
(U_p \varphi)(\gamma_i) = \sum_{j=0}^{p-1}
\varphi(\gamma_i v_j^{-1})|\!|^{[-]}_{v_j}.
\]
Write each $\gamma_iv_j^{-1}$ \emph{uniquely} as $\delta_{i,j}^{-1} \gamma_{\lambda_{i,j}} u_{i,j}$ with $\delta_{i,j} \in D^\times$, $\lambda_{i,j} \in \{0, \dots, t-1\}$, and $u_{i,j} \in K^p\Iw_q$.
Then we have
\[
(U_p \varphi)(\gamma_i) = \sum_{j=0}^{p-1}
\varphi(\delta_{i,j}^{-1} \gamma_{\lambda_{i,j}} u_{i,j})|\!|^{[-]}_{v_j} = \sum_{j=0}^{p-1}
\varphi( \gamma_{\lambda_{i,j}})|\!|^{[-]}_{u_{i,j,p}v_j},
\]
where $u_{i,j,p}$ is the $p$-component of $u_{i,j}$.
Substitute back in $u_{i,j}v_j = \gamma_{\lambda_{i,j}}^{-1} \delta_{i,j} \gamma_i$ and note the fact that both $\gamma_i$ and $\gamma_{\lambda_{i,j}}$ have trivial $p$-component by our choice in Subsection~\ref{SS:explicit direct sum presentation}.  We have
\[
(U_p\varphi)(\gamma_i) =
\sum_{j=0}^{p-1} \varphi(\gamma_{\lambda_{i,j}}) |\!|^{[-]}_{\delta_{i,j,p}},
\]
where $\delta_{i,j,p}$ is the $p$-component of the \emph{global element} $\delta_{i,j} \in D^\times$.
We now check the description of each $\delta_{i,j,p}$:
\[
\delta_{i,j,p} = u_{i,j,p}v_j \in \Iw_q\big(\begin{smallmatrix} p&0\\0&1 \end{smallmatrix}\big)\Iw_q \subseteq\big(\begin{smallmatrix} p\ZZ_p& \ZZ_p\\q\ZZ_p&\ZZ_p^\times \end{smallmatrix}\big).
\]
 This concludes the proof of the proposition.
\end{proof}

\begin{remark}
By choosing the representatives $\gamma_i$'s more carefully, one can ensure that each global element $\delta$ appearing above has norm exactly $p$. This was used in a somewhat crucial way in \cite{wan-xiao-zhang}.
\end{remark}

Now, to understand the action of the $U_p$-operator, it suffices to understand the action of $|\!|^{[-]}_{\delta_p}$ on $\calC(\ZZ_p; \Lambda)$ for each $\delta_p = \big( \begin{smallmatrix}
a&b \\c&d
\end{smallmatrix} \big) \in \big(\begin{smallmatrix} p\ZZ_p& \ZZ_p\\q\ZZ_p&\ZZ_p^\times \end{smallmatrix}\big)$.
For later use, we will generalize our discussion to all $\delta_p $ in the monoid $\bfM_1$ (as defined in \eqref{E:monoid M1}).

\subsection{More Mahler expansions}
\label{SS:more Mahler expansions}
Recall that every function $f \in \calC(\ZZ_p; \ZZ_p)$ admits a Mahler expansion $f(z) = \sum_{n \geq 0} a_n(f)\binom zn$ with $a_n(f) \in \ZZ_p$ and $\lim_{n \to \infty} a_n(f) =0$.
These \emph{Mahler coefficients} $a_n(f)$ can be determined by the following process:
for $f(z) \in  \calC(\ZZ_p; \ZZ_p)$, we write 
\[
\tilde \Delta(f)(z) = \tilde \Delta^{(1)}(f)(z)
 = f(z+1) -f(z) \textrm{ and } \tilde \Delta^{(m+1)}(f)(z) = \tilde \Delta \big(\tilde \Delta^{(m)}(f)\big)(z) \textrm{ for }m \in \NN.
\]
Set $\tilde \Delta^{(0)}(f) = f$.
Then for $m \in \ZZ_{\geq 0}$, we have
\begin{equation}
\label{E: expression for an(f)}
a_m(f) =  \tilde \Delta^{(m)}(f)(0) = \sum_{i=0}^m (-1)^{m-i}\binom mi f(i).
\end{equation}

\begin{proposition}
Consider the action of $\delta_p = \big( \begin{smallmatrix}
a&b \\c&d
\end{smallmatrix} \big) \in \bfM_1$
on $\calC(\ZZ_p; \Lambda)$.
Let $P(\delta_p)=(P_{m,n}(\delta_p))_{m,n\in \ZZ_{\geq 0}}$ denote the infinite matrix for this action with respect to the orthonormal Mahler basis $1, z, \binom z2, \dots$.
Then
\begin{equation}
\label{E:Pmn}
P_{m,n}(\delta_p) = \tilde  \Delta^{(m)}\bigg( \binom{(az+b)/(cz+d)}{n} \cdot \big[(cz+d)\big]\bigg)\bigg|_{z=0}.
\end{equation}
\end{proposition}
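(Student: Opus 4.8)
The plan is to unwind the definitions: by construction, the matrix entry $P_{m,n}(\delta_p)$ is the $m$-th Mahler coefficient of the image of the $n$-th Mahler basis vector $\binom zn$ under the operator $|\!|^{[-]}_{\delta_p}$, so the statement follows by combining the explicit formula for this operator (from \eqref{E:right action on C(N)}) with the finite-difference formula \eqref{E: expression for an(f)} for Mahler coefficients.

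First I would record the explicit form of the operator. By the computation carried out in \eqref{E:right action on C(N)}, applied with $\chi$ the universal character $[-]$ and extended to the monoid $\bfM_1$ of \eqref{E:monoid M1} as noted just after that equation, the action of $\delta_p = \big(\begin{smallmatrix} a&b\\ c&d\end{smallmatrix}\big)\in\bfM_1$ on $h\in\calC(\ZZ_p;\Lambda)$ is given by
\[
\big(h|\!|^{[-]}_{\delta_p}\big)(z) \;=\; [cz+d]\cdot h\Big(\tfrac{az+b}{cz+d}\Big).
\]
I would then check this is well defined with values again in $\calC(\ZZ_p;\Lambda)$: since $q\mid c$ and $p\nmid d$ we have $cz+d\in\ZZ_p^\times$ for every $z\in\ZZ_p$, so $[cz+d]\in\Lambda^\times$ and $z\mapsto[cz+d]$ is continuous; and $\frac{az+b}{cz+d}\in\ZZ_p$ depends continuously on $z$, so $h\big(\frac{az+b}{cz+d}\big)$ makes sense and is continuous in $z$.

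Next I would apply this to the basis element $e_n(z)=\binom zn$. Since $\binom wn\in\ZZ_p$ for all $w\in\ZZ_p$ and $w\mapsto\binom wn$ is continuous, the image
\[
g_n(z) \;:=\; \big(e_n|\!|^{[-]}_{\delta_p}\big)(z) \;=\; \binom{(az+b)/(cz+d)}{n}\cdot[cz+d]
\]
lies in $\calC(\ZZ_p;\Lambda)$. By definition of the matrix $P(\delta_p)$ in the Mahler basis, $g_n=\sum_{m\geq0}P_{m,n}(\delta_p)\binom zm$, so $P_{m,n}(\delta_p)=a_m(g_n)$, the $m$-th Mahler coefficient of $g_n$. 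The identity \eqref{E: expression for an(f)}, namely $a_m(f)=\tilde\Delta^{(m)}(f)(0)$, is a purely formal identity for the iterated finite-difference operator and is valid for functions valued in any abelian group, in particular for $\Lambda$-valued continuous functions; applying it to $f=g_n$ and substituting the formula above yields exactly \eqref{E:Pmn}.

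There is no genuinely hard step here; the computation is a direct bookkeeping exercise. The only points requiring a little care are (i) confirming that the operator formula displayed above is precisely the extension of \eqref{E:right action on C(N)} to $\bfM_1$, and (ii) verifying that the Mahler-basis formalism and the difference formula \eqref{E: expression for an(f)} remain valid with coefficients in the (non-field) ring $\Lambda$. Point (ii) is immediate since $\calC(\ZZ_p;\Lambda)$ has the Mahler functions $\binom zn$ as an orthonormal basis (as already used in Proposition~\ref{P:Up operator agrees}) and \eqref{E: expression for an(f)} is an algebraic consequence of that fact.
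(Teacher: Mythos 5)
Your proof is correct and follows essentially the same route as the paper's: unwind the definition of the matrix entry via \eqref{E:right action on C(N)} to get $g_n(z)=\binom{(az+b)/(cz+d)}{n}\cdot[cz+d]$, then read off the $m$-th Mahler coefficient using \eqref{E: expression for an(f)}. The extra remarks you add (checking $cz+d\in\ZZ_p^\times$, noting that the Mahler formalism works over $\Lambda$) are sound but are the same points the paper makes implicitly or in passing.
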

\begin{proof}
For a function $f \in \calC(\ZZ_p; \Lambda)$, we write its Mahler coefficients $a_n(f) \in \Lambda$ as an infinite column vector.
So the entry $P_{m,n}(\delta_p)$ corresponds to the $m$th Mahler coefficient of the function
\[
\binom zn\Big|\!\Big|^{[-]}_{ \big( \begin{smallmatrix}
a&b \\c&d
\end{smallmatrix} \big)}  = \binom{(az+b)/(cz+d)}{n} \cdot \big[(cz+d)\big].
\] 
Note that the universal character $[-]$ cannot be applied to the formal expression $cz+d$; but $[cz+d]$ still makes sense as a continuous function on $\ZZ_p$ with values in $\Lambda$.
Comparing with \eqref{E: expression for an(f)}, we have
\[
P_{m,n}(\delta_p) = a_m \bigg( \binom{(az+b)/(cz+d)}{n} \cdot \big[(cz+d)\big]\bigg) = \tilde  \Delta^{(m)}\bigg( \binom{(az+b)/(cz+d)}{n} \cdot \big[(cz+d)\big]\bigg)\bigg|_{z=0}.
\]
\end{proof}

\subsection{More $p$-adic analysis}
We start by listing the following three useful equalities, which can be checked easily.

\begin{align}
\label{E:Delt of product}
\tilde \Delta^{(m)}(fg)(z) & = \sum_{i=0}^m  \binom mi \tilde \Delta^{(m-i)} (f)(z+i) \tilde \Delta^{(i)}(g)(z).
\\
\label{E:Delta of binom zm}
\tilde \Delta^{(m)}\binom zn & = \begin{cases}
\binom z{n-m} & \textrm{if }n \geq m\\
0 & \textrm{otherwise}.
\end{cases}
\\
\label{E:binomial identity}
\binom{x_1+\cdots + x_r}n &= \sum_{i_1+\cdots + i_r = n}\binom{x_1}{i_1} \cdots\binom{x_r}{i_r}.
\end{align}
As a corollary of \eqref{E:Delta of binom zm}, if $f = \sum_{n \geq 0} a_n(f) \binom zn$ is the Mahler expansion of a function $f \in \calC(\ZZ_p; \ZZ_p)$, then
\begin{equation}
\label{E:Delta m of f}
\tilde \Delta^{(m)}(f)(z) = \sum_{n \geq m} a_n(f) \binom z{n-m}.
\end{equation}

\begin{definition}\label{D:degree}
We say a continuous function $f \in \calC(\ZZ_p; \ZZ_p)$ is a \emph{polynomial function of degree $\leq n$} (for $n \in \ZZ_{\geq 0}$) if the Mahler coefficients $a_i(f) = 0$ for $i >n$.\end{definition}

\begin{lemma}
\label{L:degree basic properties}

\begin{enumerate}
\item
A continuous function $f \in \calC(\ZZ_p; \ZZ_p)$ is a polynomial function of degree $\leq n$ if and only if $\tilde \Delta^{(n+1)}(f) = 0$.
\item
If $f$ and $g$ are polynomial functions on $\ZZ_p$ of degree $\leq n$ and $\leq m$, respectively, then $fg$ is a polynomial function of degree $\leq m+n$.

\item
If $f \in \calC(\ZZ_p; \ZZ_p)$ is a polynomial function of degree $\leq r$, then $\binom {f(z)}n$ is a polynomial function of degree $\leq rn$.
\end{enumerate}
\end{lemma}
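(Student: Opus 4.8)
The plan is to prove the three parts in order, each reducing to a short manipulation of the finite-difference operator $\tilde\Delta$ together with the identities \eqref{E:Delt of product}, \eqref{E:Delta of binom zm}, \eqref{E:binomial identity}, and \eqref{E:Delta m of f}.

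For part (1), I would argue directly from \eqref{E:Delta m of f}: if $f = \sum_{n\geq 0} a_n(f)\binom zn$, then $\tilde\Delta^{(n+1)}(f)(z) = \sum_{k\geq n+1} a_k(f)\binom z{k-n-1}$. Since the functions $\binom z{\ell}$ are $\ZZ_p$-linearly independent (they form the Mahler basis), this sum vanishes identically if and only if $a_k(f)=0$ for all $k\geq n+1$, which is precisely the condition that $f$ be a polynomial function of degree $\leq n$ in the sense of Definition~\ref{D:degree}. Alternatively, evaluating $\tilde\Delta^{(m)}(f)$ at $z=0$ recovers $a_m(f)$ by \eqref{E: expression for an(f)}, so $\tilde\Delta^{(n+1)}(f)=0$ forces $a_m(f)=0$ for all $m\geq n+1$; conversely \eqref{E:Delta m of f} gives the vanishing directly. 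Either phrasing is a one-line check.

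For part (2), I would use the Leibniz-type rule \eqref{E:Delt of product}: apply $\tilde\Delta^{(m+n+1)}$ to $fg$ to get $\tilde\Delta^{(m+n+1)}(fg)(z) = \sum_{i=0}^{m+n+1}\binom{m+n+1}{i}\tilde\Delta^{(m+n+1-i)}(f)(z+i)\,\tilde\Delta^{(i)}(g)(z)$. In each summand, either $i\geq m+1$, so that $\tilde\Delta^{(i)}(g)=0$ by part (1) applied to $g$, or $i\leq m$, so that $m+n+1-i\geq n+1$ and $\tilde\Delta^{(m+n+1-i)}(f)=0$ by part (1) applied to $f$. Hence every term vanishes, so $\tilde\Delta^{(m+n+1)}(fg)=0$, and part (1) gives that $fg$ has degree $\leq m+n$. (One should note in passing that the product $fg$ still has coefficients in $\ZZ_p$, so it lies in $\calC(\ZZ_p;\ZZ_p)$ and Definition~\ref{D:degree} applies.)

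For part (3), I would first reduce to the case of monomials: writing $f$ as a polynomial function of degree $\leq r$, it suffices to understand $\binom{f(z)}{n}$, and the natural approach is to expand $\binom{f(z)}{n} = \frac{1}{n!}f(z)(f(z)-1)\cdots(f(z)-n+1)$ as a $\ZZ_p$-linear combination (using \eqref{E:binomial identity} or the addition formula for binomials, or Vandermonde) of products of $n$ functions each of the form $\binom{f(z)}{1} = f(z)$ or constants — more carefully, $\binom{f(z)}{n}$ is an integer combination of $\binom{f(z)}{j}$-type pieces, but the cleanest route is: $\binom{f(z)}{n}$ is, up to the $\ZZ_p$-integral structure, a polynomial in $f(z)$ of degree $\leq n$, and then iterating part (2) shows any product of $\leq n$ copies of $f$ (each of degree $\leq r$) has degree $\leq rn$. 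A constant function has degree $0$, so all lower-degree terms are also fine, and the $\ZZ_p$-linear combination of such functions again has degree $\leq rn$ and integral Mahler coefficients. The one point needing a word of care is integrality: $\binom{f(z)}{n}\in\ZZ_p$ for all $z$ since $f(z)\in\ZZ_p$ and binomial coefficients of $p$-adic integers are $p$-adic integers, so $\binom{f(z)}{n}\in\calC(\ZZ_p;\ZZ_p)$ and Definition~\ref{D:degree} applies to it.

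The only mildly delicate step is part (3), where one must be slightly careful to phrase "$\binom{f(z)}{n}$ is a polynomial in $f(z)$ of degree $\leq n$" in a way compatible with the $\ZZ_p$-integral (rather than $\QQ_p$-rational) structure on Mahler coefficients; but this is handled by expanding in the binomial basis $\binom{f(z)}{j}$ ($0\leq j\leq n$) with integer coefficients rather than in the power basis, and then applying part (2) to each product $\binom{f(z)}{j} = \frac{1}{j!}\prod(f(z)-i)$ — or more simply, by noting $\binom{f(z)}{j}$ is a $\ZZ_p$-linear combination of products of $j\leq n$ copies of $f$ (via repeated use of the identity $\binom{x}{j}\binom{x}{1}=\dots$) and invoking part (2) iteratively. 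No genuine obstacle arises; the whole lemma is elementary combinatorics of finite differences, and I would expect the author's proof to be only a few lines.
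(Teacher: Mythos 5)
Your parts (1) and (2) are both correct. Part (1) matches the paper verbatim; for part (2) you use the Leibniz-type rule \eqref{E:Delt of product}, which is a valid alternative to what the paper actually does.

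The paper handles (2) and (3) in one stroke with a cleaner observation: a continuous $f\in\calC(\ZZ_p;\ZZ_p)$ is a polynomial function of degree $\leq n$ (in the Mahler sense of Definition~\ref{D:degree}) if and only if $f$ is an ordinary polynomial of degree $\leq n$ in $\QQ_p[z]$ which maps $\ZZ_p$ to $\ZZ_p$. Once this is noted, (2) is immediate (a product of $\QQ_p$-polynomials of degrees $\leq n$ and $\leq m$ has degree $\leq n+m$ and still maps $\ZZ_p$ to $\ZZ_p$), and (3) is equally immediate: $\binom{f(z)}{n}$ is a $\QQ_p$-polynomial of degree $\leq rn$ in $z$, and it maps $\ZZ_p$ to $\ZZ_p$ because binomial coefficients of $p$-adic integers are $p$-adic integers. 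Integrality of the Mahler coefficients is then automatic from Mahler's theorem, so one never has to worry about $\ZZ_p$- versus $\QQ_p$-linear combinations.

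Your part (3) contains the germ of this argument but the final hedging paragraph is confused, and two of your proposed fixes are actually wrong. First, ``applying part (2) to each product $\binom{f(z)}{j}$'' is circular: part (2) can only be applied once you already know that each $\binom{f(z)}{j}$ is a polynomial function of some controlled degree, which is exactly what you are trying to prove. Second, the claim that ``$\binom{f(z)}{j}$ is a $\ZZ_p$-linear combination of products of $j$ copies of $f$'' is false: already $\binom{z}{2} = \tfrac{z^2-z}{2}$ is not a $\ZZ_2$-linear combination of $1$, $z$, $z^2$. What rescues your earlier phrasing is that you only need a $\QQ_p$-linear combination of products of $\leq n$ copies of $f$ (each of which has Mahler coefficients vanishing beyond index $rn$ by part~(2)), plus the separate observation that $\binom{f(z)}{n}$ is $\ZZ_p$-valued, hence has $\ZZ_p$ Mahler coefficients by Mahler's theorem. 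If you state it that way it goes through; as written, the last paragraph introduces a spurious integrality obstacle and then proposes incorrect repairs for it.
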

\begin{proof}

(1) If $f(z) = \sum_{i \geq 0} a_i(f)\binom zi$ is the Mahler expansion of $f$, then 
$$\tilde \Delta^{(n+1)}(f) = \sum_{i \geq n+1} a_i(f) \binom z{i-n-1}.$$
Then $\tilde \Delta^{n+1}(f) = 0$ if and only if $a_{i} =0$ for all $i \geq n+1$.  (1) is proved.

The rest of the lemma follows immediately from the fact that the polynomial functions of degree $\leq n$ are precisely the polynomials in $\QQ_p[z]$ of degree at most $n$ which map $\ZZ_p$ to $\ZZ_p$.

\quash{(2) By \eqref{E:Delt of product}, we have
\[
\tilde \Delta^{(m+n+1)}(fg)(z)  = \sum_{i=0}^{m+n+1} \binom {m+n+1}i \tilde \Delta^{(m+n+1-i)} (f)(z+i) \tilde \Delta^{(i)}(g)(z).
\]
For $i =0, \dots, m$, $\tilde \Delta^{(m+n+1-i)}(f)(z+i) = 0$; and for $i = m+1, \dots, m+n+1$, $\tilde \Delta^{(i)} (g)(z) =0$. So we conclude that $\tilde \Delta^{(m+n+1)}(fg)(z)  =0$ and $fg$ has degree $\leq m+n$ by (1).

(3) By (2) above and \eqref{E:binomial identity},
it suffices to prove this for $f(z) = a\binom zm$ with $a \in \ZZ_p$. We use induction on $m+n$, where the case when $m$ or $n$ is $1$ is clear, serving as the inductive base. So we may assume that $m, n \geq 2$.
Since it is clear that $\binom{f(z)}n$ is a continuous $\ZZ_p$-valued function on $\ZZ_p$, by (1), it is enough to check that $\tilde \Delta^{(mn+1)}(\binom{f(z)}n) =0$. Indeed,
\begin{align*}
\tilde \Delta \Big(\binom{a\binom zm}n \Big) &  = \binom{a\binom {z+1}m}n-\binom{a\binom zm}n 
 = \binom{a\binom zm +a\binom z{m-1}}n-\binom{a\binom zm}n  
 \\
 &\stackrel{\eqref{E:binomial identity}}= \sum_{i=1}^n \binom{a\binom zm}{n-i} \binom{a\binom z{m-1}}i.
\end{align*}
By inductive hypothesis and (2), 
the $i$th term of the sum has degree $\leq m(n-i) + (m-1)i = mn-i$; so by (1), it is zero after applying the operation $\tilde \Delta^{(mn)}$.  This concludes the proof of (3).}
\end{proof}

In addition to the degree, the following convenient definition is tailored for our computation.

\begin{definition-proposition}
\label{DP:tilted degree}
We say a continuous function $f \in \calC(\ZZ_p; \ZZ_p)$ has \emph{tilted degree $\leq n$} (for $n \in \ZZ_{\geq0}$) if the following equivalent conditions are satisfied:
\begin{enumerate}
\item
for any $m \in \NN$, $\tilde \Delta^{(m)}(f)$ is a (continuous) function on $\ZZ_p$ that takes value in $p^{m-n}\ZZ_p$;
\item
writing down the Mahler expansion of $f(z) = \sum_{j \geq 0} a_j(f)\binom zj$, then $v(a_j(f)) \geq j-n$.
\end{enumerate}
Note that the assumption $f \in \calC(\ZZ_p; \ZZ_p)$ implies that condition (1) for $m \leq n$ and condition (2) for $j \leq n$ hold automatically.
\end{definition-proposition}

\begin{proof}
Let $f(z) = \sum_{j \geq 0} a_j(f)\binom zj$ be the Mahler expansion of $f$.
By \eqref{E:Delta m of f}, $\tilde \Delta^{(m)}(f)  = \sum_{n \geq m} a_n(f) \binom z{n-m}$.
Since the Mahler basis forms an orthonormal basis of $\calC(\ZZ_p; \ZZ_p)$, $\tilde\Delta^{(m)}(f)$ takes value in $p^{m-n}\ZZ_p$ if and only if $a_j(f) \in p^{m-n}\ZZ_p$ for all $j \geq m \geq n$, which is equivalent to $v(a_j(f)) \geq j-n$ for all $j \geq n$.
\end{proof}

\begin{remark}\label{R:degree}
If $f \in \calC(\ZZ_p; \ZZ_p)$ is a polynomial function of degree $\leq n$, then it has tilted degree $\leq n$.
\end{remark}

\begin{lemma}
\label{L:tilted degree additive}
\begin{enumerate}
\item
If $f \in \calC(\ZZ_p; \ZZ_p)$ and $n \in \NN$, then $f$ has tilted degree $\leq n$ if and only if $\tilde \Delta(f)$ has tilted degree $\leq n-1$.
\item
If $f$ and $g$ are $\ZZ_p$-valued continuous functions on $\ZZ_p$ of tilted degree $\leq m$ and $\leq n$, respectively.
Then $fg$ has tilted degree $\leq m+n$.
\end{enumerate}
\end{lemma}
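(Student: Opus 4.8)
The plan is to read part~(1) straight off the two characterizations in Definition-Proposition~\ref{DP:tilted degree}, and then bootstrap part~(2) from it. For part~(1), write the Mahler expansion $f(z) = \sum_{j \geq 0} a_j(f)\binom zj$; the case $m=1$ of \eqref{E:Delta m of f} gives $\tilde\Delta(f)(z) = \sum_{j \geq 0} a_{j+1}(f)\binom zj$, so the $j$-th Mahler coefficient of $\tilde\Delta(f)$ is $a_{j+1}(f)$. By condition~(2) of Definition-Proposition~\ref{DP:tilted degree}, the statement ``$\tilde\Delta(f)$ has tilted degree $\leq n-1$'' means $v(a_{j+1}(f)) \geq j-(n-1) = (j+1)-n$ for all $j \geq 0$, i.e. $v(a_j(f)) \geq j-n$ for all $j \geq 1$; the missing inequality $v(a_0(f)) \geq -n$ is automatic since $f$ is $\ZZ_p$-valued and $n \geq 1$. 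This is precisely ``$f$ has tilted degree $\leq n$''. (Equivalently, one can match condition~(1) for $f$ and for $\tilde\Delta(f)$ level by level via $\tilde\Delta^{(m)}(f) = \tilde\Delta^{(m-1)}(\tilde\Delta(f))$, the level $m=0$ for $f$ being automatic.)

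For part~(2) I would induct on $m+n$, using the case $m=1$ of \eqref{E:Delt of product}, namely $\tilde\Delta(fg)(z) = f(z+1)\,\tilde\Delta(g)(z) + \tilde\Delta(f)(z)\,g(z)$. Tilted degree is unchanged by the shift $f(z) \mapsto f(z+1)$ --- condition~(1) of Definition-Proposition~\ref{DP:tilted degree} only refers to the \emph{set} of values of the iterated differences $\tilde\Delta^{(m)}(f)$, which the shift merely permutes --- so $f(z+1)$ still has tilted degree $\leq m$, while $\tilde\Delta(f)$ and $\tilde\Delta(g)$ have tilted degrees $\leq m-1$ and $\leq n-1$ by part~(1). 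The inductive hypothesis then bounds each of the two summands of $\tilde\Delta(fg)$ by tilted degree $m+n-1$; since a sum of two functions of tilted degree $\leq m+n-1$ again has tilted degree $\leq m+n-1$ (immediate from condition~(2)), $\tilde\Delta(fg)$ has tilted degree $\leq m+n-1$, and part~(1) upgrades this to $fg$ having tilted degree $\leq m+n$. This descent requires $m,n \geq 1$, so the base of the induction is the case $m=0$ or $n=0$.

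The base case I would handle by a direct Mahler-coefficient computation, which in fact proves part~(2) outright (so the induction could be dispensed with; I keep it only because it pairs naturally with part~(1)). Writing $f = \sum_j a_j\binom zj$ and $g = \sum_l b_l\binom zl$, one has $fg = \sum_{j,l} a_j b_l \binom zj\binom zl$ convergently in $\calC(\ZZ_p;\ZZ_p)$. By Lemma~\ref{L:degree basic properties}(2) each $\binom zj\binom zl$ is a polynomial function of degree $\leq j+l$, whose Mahler coefficients therefore lie in $\ZZ_p$ (Definition~\ref{D:degree}) and vanish in degrees $>j+l$. Since extracting the $k$-th Mahler coefficient is continuous (it equals $\tilde\Delta^{(k)}(\cdot)(0)$), the $k$-th Mahler coefficient of $fg$ is a $p$-adic limit of finite sums of terms $a_j b_l \cdot (\text{integer})$ with $j+l \geq k$, each of valuation $\geq v(a_j)+v(b_l) \geq (j-m)+(l-n) \geq k-(m+n)$; hence $v(a_k(fg)) \geq k-(m+n)$, i.e. $fg$ has tilted degree $\leq m+n$. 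The step I would be most careful about is exactly this bookkeeping: that the double sum converges in $\calC(\ZZ_p;\ZZ_p)$ (true since $|a_j b_l| \to 0$ as $j+l\to\infty$ and $|\binom zj\binom zl|_{\ZZ_p}\leq 1$), that Mahler-coefficient extraction commutes with the limit, and that $\binom zj\binom zl$ has the claimed degree and integrality --- all resting on Lemma~\ref{L:degree basic properties} and orthonormality of the Mahler basis. Everything else is formal.
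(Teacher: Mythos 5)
Part~(1) of your argument is exactly the paper's: both proofs read the equivalence off Definition-Proposition~\ref{DP:tilted degree}(2) using the shift of Mahler coefficients $a_j(f) = a_{j-1}(\tilde\Delta(f))$ (the paper states this relation and stops; you spell out the edge case $j=0$, which is a good instinct).

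Part~(2) is correct but takes a genuinely different route. The paper uses Definition-Proposition~\ref{DP:tilted degree}(1) -- the ``iterated differences land in $p^{\bullet}\ZZ_p$'' characterization -- together with the full Leibniz formula \eqref{E:Delt of product} at arbitrary level $r$, and reads off the bound $(r-i-m)+(i-n)=r-m-n$ on each summand of $\tilde\Delta^{(r)}(fg)$ in a single line. Your operative argument (the direct Mahler-coefficient computation, which you correctly note subsumes the induction, since the induction's base case $m=0$ or $n=0$ is handled by it anyway) instead works entirely on the side of characterization (2): expand $f$ and $g$ in the Mahler basis, use Lemma~\ref{L:degree basic properties}(2) plus orthonormality to control the Mahler coefficients of $\binom zj\binom zl$, and bound $v(a_k(fg))$ term by term. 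Both are correct. The paper's version buys brevity -- it is a one-line application once \eqref{E:Delt of product} is in hand and needs no limiting argument. Yours buys a more hands-on picture of what multiplication does to Mahler coefficients, at the cost of the convergence bookkeeping (which you handle carefully and correctly: $|a_jb_l|\to 0$, $|\binom zj\binom zl|_{\ZZ_p}\le 1$, and continuity of $h\mapsto\tilde\Delta^{(k)}(h)(0)$). Your intermediate inductive argument is essentially the paper's proof in disguise -- iterating the level-$1$ Leibniz rule recovers \eqref{E:Delt of product} -- but as you observe it is not self-contained, so you are right to present the Mahler computation as the real proof.
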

\begin{proof}
(1) is clear from Definition-Proposition~\ref{DP:tilted degree}(2) because $a_j(f) = a_{j-1}(\tilde \Delta(f))$.

(2)
We check it using Definition-Proposition~\ref{DP:tilted degree}(1).
By \eqref{E:Delt of product}, we have
\[
\tilde \Delta^{(r)}(fg)(z)  = \sum_{i=0}^{r} \binom {r}i \tilde \Delta^{(r-i)} (f)(z+i) \tilde \Delta^{(i)}(g)(z).
\]
Each term on the right hand side has valuation at least $(r-i-m) + (i-n) = r-m-n$.
So $fg$ has tilted degree $\leq m+n$.
\end{proof}

To understand the expression \eqref{E:Pmn}, we need the following estimates.

\begin{lemma}\label{L:tilted degree estimate 0}
Let $g(z) = b_0 + b_1z + \cdots + b_r z^r\in\ZZ_p[z]$. If  there exists some $s\in\RR_{\geq0}$ such that $v(b_i)\geq is$ for all $i$, then we can rewrite $g(z)$ as
\[
\sum_{i=0}^r c_i\cdot i!\binom zi
\]
with $v(c_i)\geq is$ for all $i$.
\end{lemma}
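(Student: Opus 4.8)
The plan is to pass between the two bases $\{z^i\}$ and $\{i!\binom{z}{i}\}$ of the $\ZZ_p$-submodule of $\ZZ_p[z]$ they respectively span, and to track how the $p$-adic valuations of coefficients transform. The key structural input is the classical change-of-basis matrix: there are Stirling numbers $S(i,j)$ (of the second kind) with $z^i = \sum_{j=0}^{i} S(i,j)\, j!\binom{z}{j}$, and conversely $j!\binom{z}{j} = \sum_{i=0}^{j} s(j,i)\, z^i$ for signed Stirling numbers $s(j,i)$ of the first kind; both families are \emph{integers}. I would first record this and observe that the matrix $\big(S(i,j)\big)$ is upper (or lower, depending on convention) triangular with $1$'s on the diagonal, hence invertible over $\ZZ$, so passing from the $z$-basis expression to the $i!\binom{z}{i}$-basis expression stays in $\ZZ_p$.

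Concretely: given $g(z) = \sum_{i=0}^r b_i z^i$ with $v(b_i) \geq is$, substitute $z^i = \sum_{j=0}^{i} S(i,j)\, j!\binom{z}{j}$ to get $g(z) = \sum_{j=0}^r c_j\, j!\binom{z}{j}$ with $c_j = \sum_{i \geq j} b_i\, S(i,j)$. Then I would estimate
\[
v(c_j) \geq \min_{i \geq j} \big( v(b_i) + v(S(i,j)) \big) \geq \min_{i \geq j} \big( is + 0 \big) = js,
\]
using $v(b_i) \geq is$, the integrality $v(S(i,j)) \geq 0$, and $is \geq js$ for $i \geq j$ since $s \geq 0$. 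This is exactly the claim. The argument that the $c_j$ themselves lie in $\ZZ_p$ is subsumed: $v(c_j) \geq js \geq 0$.

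The only genuine content is the fact that the transition coefficients $S(i,j)$ are integers (equivalently that $z^i$, being a polynomial function $\ZZ_p \to \ZZ_p$, has Mahler coefficients in $\ZZ_p$, so its expansion in the $j!\binom{z}{j}$ basis has coefficients in $\ZZ_p$); this is immediate from the theory of Mahler expansions already invoked in Subsection~\ref{SS:Mahler expansion I}, or from the standard recursion $S(i,j) = S(i-1,j-1) + j\,S(i-1,j)$. So there is essentially no obstacle here — the lemma is a bookkeeping step. If one wants to avoid even citing Stirling numbers explicitly, an alternative is a direct induction on $r$: write $g(z) = b_r z^r + (\text{lower degree})$, expand $z^r = r!\binom{z}{r} + (\text{poly of degree} < r)$ where the lower-degree correction $z^r - r!\binom zr$ has $\ZZ_p$-coefficients each of valuation $\geq 0$, hence (crudely) $\geq$ the required bound is not automatic, so one must be slightly careful; the cleanest route really is the triangular change-of-basis matrix with integer entries, applied in one step as above. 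I would present the one-step version. The mild point to be careful about is that the hypothesis is $v(b_i) \geq is$ for \emph{all} $i$ including those $b_i$ that may vanish (where $v = \infty$), which causes no trouble, and that $s$ need not be an integer, which is also harmless since we only use monotonicity of $i \mapsto is$.
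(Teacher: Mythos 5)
Your proof is correct and is essentially the same as the paper's: both exploit the unitriangular, integer change-of-basis matrix between $\{z^i\}$ and $\{i!\binom{z}{i}\}$, together with $s\geq 0$. The only stylistic difference is direction — the paper expands $i!\binom{z}{i}$ in the monomial basis (coefficients $\alpha_{i,j}$, i.e.\ Stirling numbers of the first kind) and solves the resulting triangular system by reverse induction, while you expand $z^i$ in the binomial basis (Stirling numbers of the second kind) and read off $c_j=\sum_{i\geq j}b_i S(i,j)$ directly.
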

\begin{proof}: If we compare the coefficients of each degree, we see that
\begin{align*}
b_r& = c_r,\\
b_{r-1}  & = c_{r-1}+ c_r  \alpha_{r,r-1},\\
b_{r-2} & = c_{r-2} + c_{r-1}  \alpha_{r-1, r-2} + c_r \alpha_{r, r-2},\\
\dots & \dots,
\end{align*}
where $\alpha_{i,j}$ are the coefficients of $z^j$ in the product $z(z-1)\cdots (z-i+1)$, which is of course an integer.
By reverse induction, we see that $v(b_r)\geq rs$ implies that of $v(c_r)\geq rs$, and $v(b_{r-1})\geq (r-1)s$ implies that of $c_{r-1}$, .... This concludes the lemma.
\end{proof}

\begin{lemma}
\label{L:tilted degree estimate I}
For a function $f(z) = a_0 + pa_1 z+ p^2a_2z^2 + \cdots \in \ZZ_p\llbracket pz \rrbracket$ and an integer $n \geq0$, the expression $\binom{f(z)}n$ has tilted degree $\leq \lfloor \frac np \rfloor$.
\end{lemma}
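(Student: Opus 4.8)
The goal is to bound the tilted degree of $\binom{f(z)}{n}$ where $f(z) = a_0 + p a_1 z + p^2 a_2 z^2 + \cdots$. The first observation is that the ``constant term'' $a_0$ can be separated off: using \eqref{E:binomial identity} (Vandermonde), write
\[
\binom{f(z)}{n} = \binom{a_0 + (f(z) - a_0)}{n} = \sum_{j=0}^n \binom{a_0}{n-j}\binom{f(z)-a_0}{j},
\]
and since $\binom{a_0}{n-j} \in \ZZ_p$ and tilted degree is subadditive under multiplication (Lemma~\ref{L:tilted degree additive}(2)), it suffices to bound the tilted degree of $\binom{g(z)}{j}$ where $g(z) = f(z) - a_0 = p a_1 z + p^2 a_2 z^2 + \cdots \in \ZZ_p\llbracket pz\rrbracket$ has zero constant term, for each $j \leq n$; we want tilted degree $\leq \lfloor j/p\rfloor \leq \lfloor n/p\rfloor$.

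\textbf{Reduction to polynomials and the key estimate.} Next I would reduce to the case where $g$ is a polynomial: since $\binom{g(z)}{j}$ depends only on $g$ modulo a high power of $p$ (as $g$ has values in $p\ZZ_p$, the function $\binom{g(z)}{j}$ is a fixed polynomial expression in $g$ with $\ZZ_p$-coefficients, and truncating $g$ changes $\binom{g(z)}{j}$ by something divisible by an arbitrarily high power of $p$), and tilted degree $\leq \lfloor j/p\rfloor$ is a closed condition, it is enough to treat $g(z) = pa_1 z + \cdots + p^r a_r z^r$ a polynomial with $v$ of the coefficient of $z^i$ at least $i$. Now apply Lemma~\ref{L:tilted degree estimate 0} with $s = 1$: rewrite $g(z) = \sum_{i=1}^r c_i \cdot i!\binom{z}{i}$ with $v(c_i) \geq i$. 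Then by \eqref{E:binomial identity} again,
\[
\binom{g(z)}{j} = \binom{\sum_i c_i\, i!\binom zi}{j} = \sum_{i_1 + \cdots + i_r = j} \prod_{\ell=1}^r \binom{c_\ell\, \ell!\binom{z}{\ell}}{i_\ell},
\]
so by subadditivity of tilted degree it suffices to bound the tilted degree of each $\binom{c\, \ell!\binom z\ell}{i}$ with $v(c)\geq \ell$, and show that over all ways of writing $j = \sum i_\ell$ the sum of these bounds is $\leq \lfloor j/p\rfloor$.

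\textbf{The single-term estimate.} The main remaining task — and the real obstacle — is to bound the tilted degree of $h(z) := \binom{c\,\ell!\binom{z}{\ell}}{i}$ when $v(c) \geq \ell$. Note $\ell!\binom z\ell$ is a polynomial function of degree $\leq \ell$, hence has tilted degree $\leq \ell$; more usefully, $c\,\ell!\binom z\ell$ is a polynomial of degree $\leq \ell$ all of whose values lie in $p^\ell\ZZ_p\cdot(\text{stuff})$, indeed in $p^{\ell - v(\ell!)}\cdot(\ell!)\ZZ_p$... the cleanest route: expand $\binom{w}{i} = \frac{1}{i!}w(w-1)\cdots(w-i+1)$ with $w = c\,\ell!\binom z\ell$, so $h(z) = \frac{1}{i!}\prod_{a=0}^{i-1}(w - a)$. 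Each factor $w-a$ is a polynomial of degree $\leq\ell$ with coefficients: the top ones divisible by $p^\ell$ (since $v(c)\geq\ell$), hence $w-a$ has tilted degree $\leq \ell$, but better, one extracts powers of $p$: $w = c\,\ell!\binom z\ell$ has all coefficients divisible by $p^\ell$ — wait, $\ell!\binom z\ell = z(z-1)\cdots(z-\ell+1)$ has integer (unit-ish) coefficients, so the coefficients of $w$ are divisible by $p^{v(c)}\geq p^\ell$. So $w$ is $p^\ell$ times an integer polynomial of degree $\leq\ell$. Then $\binom wi = \frac{w(w-1)\cdots(w-i+1)}{i!}$; I would estimate $v$ of its coefficients by: numerator $w(w-1)\cdots(w-i+1)$ — among the $i$ factors, one is $w$ itself (contributing $p^\ell$-divisibility to leading coefficients and vanishing constant term), but constant terms of $w-a$ for $a\geq 1$ are units, so the product's coefficient of $z^k$ is divisible by roughly $p^{\lceil k/\ell\rceil \cdot \ell}$, divided by $i!$ of valuation $v(i!)$; combined with Lemma~\ref{L:tilted degree estimate 0} this should yield tilted degree $\leq \lceil i/\ell\rceil \cdot \ell - \ell - v(i!) \leq \cdots$. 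The bookkeeping must be arranged so that when summed over a partition $j = \sum_\ell i_\ell$, the total is $\leq\lfloor j/p\rfloor$; the worst case is $\ell = 1$, $i_1 = j$, giving $\binom{c z}{j}$ with $v(c)\geq 1$, where one checks directly that $\binom{cz}{j} = \frac{cz(cz-1)\cdots(cz - j+1)}{j!}$ has $z^k$-coefficient of valuation $\geq k - v(j!) \geq k - \frac{j-1}{p-1}$, and $v(j!) = \frac{j - s_p(j)}{p-1}$ forces tilted degree $\leq v(j!) \leq \lfloor j/p\rfloor$ after a short combinatorial check (this last inequality $v(j!)\leq\lfloor j/p\rfloor$ being exactly where the floor-of-$n/p$ comes from). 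I expect the careful valuation bookkeeping in this single-term estimate, and verifying it adds up correctly over all partitions, to be the bulk of the work; everything else is the two Vandermonde reductions plus invoking Lemmas~\ref{L:tilted degree additive} and~\ref{L:tilted degree estimate 0}.
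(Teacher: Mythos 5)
Your proposal diverges from the paper's proof in a way that introduces a genuine gap. The paper's argument is short: after reducing to polynomial $f$, it considers the \emph{product} $g(z) = f(z)(f(z)-1)\cdots(f(z)-n+1)$, which lies in $\ZZ_p[pz]$ simply because $f$ does. Applying Lemma~\ref{L:tilted degree estimate 0} (with $s=1$) to \emph{this} $g$ gives $g = \sum_k c_k\,k!p^k\binom zk$ with $v(c_k)\geq 0$, so the Mahler coefficient of $\binom{f(z)}{n} = g/n!$ is $c_k k!p^k/n!$, and the lemma reduces to the single elementary inequality $v(k!p^k/n!)\geq k-\lfloor n/p\rfloor$ for $k>\lfloor n/p\rfloor$, proved by summing $\lfloor k/p^\ell\rfloor\geq\lfloor n/p^{\ell+1}\rfloor$. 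No Vandermonde, no multinomial, no partition bookkeeping.

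You instead apply Lemma~\ref{L:tilted degree estimate 0} to $f$ (or $f-a_0$; incidentally the removal of $a_0$ is unnecessary, since the hypothesis $v(b_i)\geq is$ already allows an arbitrary constant term), then expand $\binom{\sum_\ell c_\ell\,\ell!\binom z\ell}{j}$ by the multinomial Vandermonde identity. This leaves you with the task of bounding the tilted degree of each $\binom{c_\ell\,\ell!\binom z\ell}{i_\ell}$ and showing these bounds sum correctly over every partition $j=\sum i_\ell$. You never complete this ``single-term estimate'' --- you acknowledge it is ``the bulk of the work'' and leave it as a sketch --- and the sketch contains a false step: the inequality $v(j!)\leq\lfloor j/p\rfloor$, which you call the source of the floor, is \emph{wrong} in general (already $v((p^2)!) = p+1 > p = \lfloor p^2/p\rfloor$). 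Moreover, the ``worst case'' $\ell=1,\ i_1=j$ of your expansion is the term $\binom{cz}{j}$ with $v(c)\geq 1$, which is itself an instance of the lemma you are trying to prove, so without an independent argument the reduction is circular. The fix is to do what the paper does: put the $n!$-denominator and the binomial structure \emph{before} invoking Lemma~\ref{L:tilted degree estimate 0}, by applying that lemma to the product $f(f-1)\cdots(f-n+1)\in\ZZ_p[pz]$; then the only valuation computation needed is $v(k!p^k)\geq k-\lfloor n/p\rfloor+v(n!)$ for $k>\lfloor n/p\rfloor$, which replaces your incorrect inequality.
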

\begin{proof}
By approximation, we may assume that $f(z)$ is a polynomial.
Put 
\[
g(z) = f(z) (f(z)-1) \cdots (f(z)-n+1) \in \ZZ_p[pz]
\]
so that $\binom {f(z)}n = \frac 1{n!}g(z)$. By Lemma \ref{L:tilted degree estimate 0}, we may rewrite $g(z)$ as
\[
\sum_{k=0}^r c_k\cdot k!p^k\binom zk
\]
with $v(c_k)\in\ZZ_p$ for all $k$.

Since $\binom{f(z)}n$ is a continuous $\ZZ_p$-valued function, it suffices to prove that when $k > \lfloor \frac np\rfloor$, $v(k!p^k/n!) \geq k-\lfloor \frac np\rfloor$.
To this end, note that in this case $\lfloor \frac k{p^\ell}\rfloor \geq \lfloor \frac n{p^{\ell+1}}\rfloor$ for any $\ell \in \NN$.
Thus 
\[
v(k!p^k) = k + \lfloor \tfrac kp \rfloor + \lfloor \tfrac k{p^2} \rfloor + \cdots \geq k + \lfloor \tfrac n{p^2} \rfloor + \lfloor \tfrac n{p^3} \rfloor + \cdots = k-\lfloor \tfrac np \rfloor + v(n!).
\]
We are done.
\end{proof}

\begin{lemma}
\label{L:tilted degree estimate II}
For a function $f(z) = a_0+a_1z + pa_2z^2/2 + p^2a_3z^3 / 3 + \cdots + p^{k-1}a_k z^k / k + \cdots $ with $a_n \in \ZZ_p$ and an integer $m \geq 0$, the expression $\binom{f(z)}m$ has tilted degree $\leq m$.
\end{lemma}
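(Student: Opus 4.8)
The plan is to follow the template of the proof of Lemma~\ref{L:tilted degree estimate I} in spirit, but to replace the monomial-coefficient estimate by an induction on $m$, the key mechanism being that the finite-difference operator $\tilde\Delta$ improves the shape of a ``form-II'' function. Concretely, the first step I would record is a structural fact: if $f$ is a function of the type appearing in the lemma (call it \emph{form II}), then $\tilde\Delta f = c_0 + p\cdot h$ with $c_0\in\ZZ_p$ and $h$ again of form II with $h(0)=0$. This is a direct computation from $(z+1)^k-z^k=\sum_{j<k}\binom kj z^j$ together with $\tfrac1k\binom kj=\tfrac1j\binom{k-1}{j-1}$: the coefficient of $z^j$ in $\tilde\Delta f$ (for $j\geq 1$) works out to $\tfrac{p^j}{j}$ times an element of $\ZZ_p$, and $\tfrac{p^j}{j}=p\cdot\tfrac{p^{j-1}}{j}$, while the $j=0$ coefficient is $f(1)-f(0)\in\ZZ_p$.

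Using this, I would first treat $m=1$ directly (it is the only case not amenable to the induction below). Since $\tilde\Delta^{(j)}(f)(0)=\tilde\Delta^{(j-1)}(\tilde\Delta f)(0)$ is the $(j-1)$-st Mahler coefficient of $\tilde\Delta f=c_0+ph$, and $c_0$ contributes only to Mahler index $0$, one gets $a_j(f)=p\cdot a_{j-1}(h)$ for $j\geq 2$; iterating (each successive $h$ is again form II) gives $a_j(f)=p^{j-1}a_1(h^{(j-1)})$ with $h^{(j-1)}$ form II, hence $\ZZ_p$-valued, so $v(a_j(f))\geq j-1$. By Definition--Proposition~\ref{DP:tilted degree}(2), $f$ has tilted degree $\leq 1$.

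Next I would prove the auxiliary statement that for any form-II $h$ and any $r\geq 0$, $\binom{ph(z)}{r}$ has tilted degree $\leq\max(0,r-1)$. Indeed, by the $m=1$ case $h$ has tilted degree $\leq 1$, so $ph$ — and hence each $ph-i$ for $i\in\ZZ$ — has tilted degree $\leq 0$; by Lemma~\ref{L:tilted degree additive}(2) the product $g(z):=\prod_{i=0}^{r-1}(ph(z)-i)$ has tilted degree $\leq 0$; and since $\binom{ph(z)}{r}=g(z)/r!$ is $\ZZ_p$-valued with Mahler coefficients $a_j(g)/r!$, it has tilted degree $\leq v(r!)\leq r-1$ for $r\geq 1$ (and $\leq 0$ for $r=0$).

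Finally, the general case goes by induction on $m$: for $m\geq 2$, Lemma~\ref{L:tilted degree additive}(1) reduces the claim to $\tilde\Delta\binom fm$ having tilted degree $\leq m-1$, and \eqref{E:binomial identity} gives $\tilde\Delta\binom fm=\sum_{i=1}^m\binom{f}{m-i}\binom{\tilde\Delta f}{i}$. By the inductive hypothesis $\binom{f}{m-i}$ has tilted degree $\leq m-i$; writing $\tilde\Delta f=c_0+ph$ and expanding $\binom{c_0+ph}{i}=\sum_r\binom{c_0}{i-r}\binom{ph}{r}$ via \eqref{E:binomial identity}, the auxiliary statement bounds $\binom{\tilde\Delta f}{i}$ by tilted degree $i-1$; so each summand has tilted degree $\leq(m-i)+(i-1)=m-1$ by Lemma~\ref{L:tilted degree additive}(2), and so does the sum. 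The step I expect to require the most care is keeping the $m=1$ case and the auxiliary statement logically prior to, and independent of, the main induction — in particular tracking correctly through the Mahler coefficients the ``gain of a factor $p$'' produced by $\tilde\Delta$ — and, as in the earlier lemmas, separately checking the inequalities when $p=2$ (where $v(q)=2$ and the valuations of small factorials shift some estimates by one).
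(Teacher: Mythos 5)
Your proof is correct, but it takes a genuinely different route from the one in the paper. The paper first reduces to the monomial case $f(z) = p^{n-1}a_n z^n/n$ via \eqref{E:binomial identity} and Lemma~\ref{L:tilted degree additive}(2), then rewrites $\binom{f}{m}$ in the Pochhammer basis using Lemma~\ref{L:tilted degree estimate 0} (working in $\ZZ_p[p^{1/n}]$), and deduces the required valuation inequality by a case analysis on $n$: the cases $p\nmid n$ and $n\geq 2p$ (with $(p,n)\neq(2,4)$) are handled by an explicit estimate comparing $v(k!)$, $v(m!)$ and $\frac{1+v(n)}{n}k$, while the remaining ``boundary'' cases $n=p$ and $(p,n)=(2,4)$ are treated by a direct induction on $m$ exploiting that $\tilde\Delta f\in\ZZ_p[pz]$. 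Your argument instead avoids the monomial reduction entirely, by isolating the structural fact that $\tilde\Delta$ carries a form-II function $f$ to $c_0+p\cdot(\text{form II})$; this both gives the base case $m=1$ by iterating, and (via your auxiliary bound on $\binom{ph}{r}$) makes the induction on $m$ work uniformly for all form-II $f$. In effect you have extended the pattern the paper uses only in cases (c) and (d) to cover every case, replacing the paper's Pochhammer rewriting and fractional-exponent estimates with one clean computation of $\tilde\Delta f$ and the trivial bound $v(r!)\leq r-1$. This buys uniformity (no case split on $n$, no $\ZZ_p[p^{1/n}]$) and makes the mechanism transparent; it also renders your final worry about $p=2$ moot — the structural fact and the bound $v(r!)\leq r-1$ hold for every prime, so your argument needs no separate $p=2$ check, unlike the paper's case (d).
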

\begin{proof}
By approximation, we may suppose that $f(z)$ is a polynomial function.
Moreover, by the binomial identity \eqref{E:binomial identity} together with the additive property of the tilted degree (Lemma~\ref{L:tilted degree additive}), we may assume that $f(z) = p^{n-1}a_nz^n/n$ is a monomial with $n \in \NN$ (the case $f(z)=a_0$ is trivial).
When $n=1$, this follows from the easy bound: $\binom{a_1z}m$ is a polynomial function of degree $m$, so we are done by Remark \ref{R:degree}.
So we assume $n>1$ for the rest of the proof.

Fix $n$ and put $g(z) = f(z)(f(z)-1) \cdots (f(z) +m-1) \in \ZZ_p\big[ \frac{p^{n-1}}{n}z^n\big]$; its degree is $nm$. By Lemma \ref{L:tilted degree estimate 0}, we can rewrite $\binom{f(z)}m = g(z) / m!$ as
\[
\sum_{k=0}^{nm} c_k k! p^{\frac{n-1-v(n)}{n}k} / m! \cdot  \binom zk,
\]
where $c_k$ belongs to $\ZZ_p[p^{1/n}]$.
Now we hope to show that for any $k\leq mn$, 
\begin{equation}
\label{E:inequality II}
v\big (c_k k! p^{\frac{n-1-v(n)}n k} / m!\big) \geq \max\{k-m, 0\}.
\end{equation}
When $k \leq m$, this follows from the fact that $\binom{f(z)}m$ is a continuous $\ZZ_p$-valued function on $\ZZ_p$.
So we may assume that $k>m$ (and $k \leq mn$ as the degree of $g(z)$ is just $mn$).
Simplifying the terms of \eqref{E:inequality II}, we see that it suffices to show (by ignoring the contribution of $v(c_k)$)
\begin{equation}
\label{E:stronger inequality}
v(k!) + m \geq v(m!) + \frac{1+v(n)}n{k}.
\end{equation}
We separate several cases:
\begin{itemize}
\item[(a)] If $p \nmid n$, then we need to show that $v(k!) + m \geq v(m!) + k/n$.  But this is just a combination of $v(k!) \geq v(m!)$ (as $k \geq m$) and $m \geq k/n$ (as $k \leq nm$).
\item[(b)]
If $n \geq 2p$ and $(p, n)\neq(2, 4)$, then $\frac{1+v(n)}{n} \leq 1/p$.
But $v(k!/m!) \geq \lfloor\frac {k-m}p \rfloor\geq \frac{k-m}p-\frac {p-1}p$.
So \eqref{E:stronger inequality} follows from
\[
v(k!/m!) + m \geq \tfrac kp + \tfrac{p-1}pm - \tfrac{p-1}p \geq \tfrac kp \geq \tfrac{1+v(n)}n k.
\]
\item[(c)]
If $n =p$, the inequality \eqref{E:stronger inequality} might fail.  So we have to go back to the beginning to show directly that $\binom{ap^{p-2}z^p}m$ has tilted degree $\leq m$, when $a \in \ZZ_p$.
We prove this by induction on $m$; the case of $m=0$ is void.
Now suppose this is proved for all numbers strictly less than $m$ and we prove it for $m$.
We need to show that
\[
\tilde \Delta\bigg( \binom{ap^{p-2}z^p}m\bigg) \textrm{ has tilted degree }\leq m-1.
\]
We argue as in Lemma~\ref{L:degree basic properties}(3). Note that
\begin{align*}
\tilde \Delta\big( \binom{ap^{p-2}z^p}m\big) &=  \binom{ap^{p-2}(z+1)^p}m - \binom{ap^{p-2}z^p}m
\\
&\stackrel{\eqref{E:binomial identity}}= \sum_{j=1}^m 
\binom{ap^{p-2}((z+1)^p-z^p)}j
 \binom{ap^{p-2}z^p}{m-j}.
\end{align*}
Note that $ap^{p-2}((z+1)^p-z^p) \in \ZZ_p[pz]$; so Lemma~\ref{L:tilted degree estimate I} shows the first factor has tilted degree $\leq \lfloor \frac jp\rfloor\leq j-1$. The second factor has tilted degree $\leq m-j$ by induction.  By Lemma~\ref{L:tilted degree additive}, the sum has tilted degree $\leq m-1$.  We are done.
\item[(d)]If $(p,n)=(2,4)$, we may proceed as in $(c)$ to show directly that $\binom{2az^4}m$ has tilted degree $\leq m$. The only non-trivial part is the inductive step: we need to show that $\binom{2a(z+1)^4-2az^4}{j}$ has tilted degree $\leq j-1$ for $j\in\mathbb{N}$. To this end, note that 
$2a(z+1)^4-2az^4=a(8z^3+12z^2+8z+2)\in\ZZ_2[2z]$, and then apply Lemma~\ref{L:tilted degree estimate I}.
\end{itemize} 
\end{proof}

\begin{proposition}
\label{P:combined estimate}
\begin{enumerate}
\item
When $ \delta_p = \big(\begin{smallmatrix} a&b\\c&d \end{smallmatrix}\big) \in \big(\begin{smallmatrix} p\ZZ_p&\ZZ_p\\q\ZZ_p&\ZZ_p^\times\end{smallmatrix}\big)$, the coefficient $P_{m,n}(\delta_p)$
belongs to 
\[
\gothm_\Lambda
^{\max\{m-\lfloor n/p\rfloor, 0\}}.
\]
\item
When $ \delta_p = \big(\begin{smallmatrix} a&b\\c&d \end{smallmatrix}\big) \in \bfM_1$, the coefficient
$P_{m,n}(\delta_p)$ belongs to 
\[
 \gothm_\Lambda^{\max\{m-n, 0\}}.
 \]
\end{enumerate}

\end{proposition}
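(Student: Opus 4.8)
We plan to reduce both parts to the one-variable estimates already established in Lemmas~\ref{L:tilted degree estimate I} and~\ref{L:tilted degree estimate II}, while keeping careful track of $\gothm_\Lambda$-adic valuations (as opposed to $p$-adic ones). Write $\delta_p = \big(\begin{smallmatrix} a&b\\c&d\end{smallmatrix}\big)$ and $w(z) := (az+b)/(cz+d)$. In both cases $c \in q\ZZ_p$ and $d \in \ZZ_p^\times$, so $cz+d \in \ZZ_p^\times$ for every $z \in \ZZ_p$; thus $w \in \calC(\ZZ_p;\ZZ_p)$ and $[(cz+d)] \in \calC(\ZZ_p;\Lambda)$ are honest continuous functions, and \eqref{E:Pmn} reads $P_{m,n}(\delta_p) = \tilde{\Delta}^{(m)}\big(\binom{w(z)}{n}\cdot [(cz+d)]\big)\big|_{z=0}$.

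The first step is to peel off the character factor. Writing $cz+d = d\cdot(1+cz/d)$ with $1+cz/d \in 1+q\ZZ_p$, multiplicativity of $[-]$ and the definition $T = [\exp(q)]-1$ give
\[
[(cz+d)] = [d]\cdot (1+T)^{s(z)} = [d]\cdot\sum_{k \geq 0}\tbinom{s(z)}{k}T^k, \qquad s(z) := \tfrac1q\log(1+cz/d) \in \calC(\ZZ_p;\ZZ_p).
\]
Expanding the logarithm and using $c \in q\ZZ_p$, one checks that the coefficient of $z^j$ in the power series $s(z)$ has valuation $\geq (j-1)v(q) \geq (j-1)-v(j)$; hence $s(z)$ has exactly the shape $a_0 + a_1z + pa_2 z^2/2 + p^2a_3z^3/3+\cdots$ required in Lemma~\ref{L:tilted degree estimate II}, and therefore $\binom{s(z)}{k}$ has tilted degree $\leq k$ for every $k \geq 0$. (The extra factor of $q$ when $p=2$ only helps here.)

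The second step is to control $\binom{w(z)}{n}$. The identity $w(z) = \tfrac bd + \tfrac{(ad-bc)z}{d(cz+d)}$ makes the coefficients transparent: expanding $(cz+d)^{-1}$ geometrically, the coefficient of $z^j$ in $w(z)$ (for $j \geq 1$) has valuation $\geq v(ad-bc) + (j-1)v(c)$. In case~(1) the hypothesis $a \in p\ZZ_p$ forces $v(ad-bc) \geq 1$, so this valuation is $\geq 1 + (j-1)v(q) \geq j$, i.e.\ $w(z) \in \ZZ_p\llbracket pz\rrbracket$; Lemma~\ref{L:tilted degree estimate I} then gives that $\binom{w(z)}{n}$ has tilted degree $\leq \lfloor n/p\rfloor$. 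In case~(2) we get only valuation $\geq (j-1)v(q)$, which still fits the shape of Lemma~\ref{L:tilted degree estimate II}, so $\binom{w(z)}{n}$ has tilted degree $\leq n$. Denote the resulting bound by $\nu_0$, so $\nu_0 = \lfloor n/p\rfloor$ in case~(1) and $\nu_0 = n$ in case~(2).

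Finally I would assemble the estimate. By the additivity of tilted degree (Lemma~\ref{L:tilted degree additive}(2)), each product $\binom{w(z)}{n}\binom{s(z)}{k}$ has tilted degree $\leq \nu_0+k$, so Definition-Proposition~\ref{DP:tilted degree}(1) yields $\tilde{\Delta}^{(m)}\big(\binom{w(z)}{n}\binom{s(z)}{k}\big)\big|_{z=0} \in p^{\max\{m-\nu_0-k,\,0\}}\ZZ_p$. Since $\tilde{\Delta}^{(m)}$ is a finite $\ZZ_p$-linear combination of evaluations, it commutes with the $\gothm_\Lambda$-adically convergent sum over $k$, giving
\[
P_{m,n}(\delta_p) = [d]\sum_{k\geq 0}\tilde{\Delta}^{(m)}\Big(\tbinom{w(z)}{n}\tbinom{s(z)}{k}\Big)\Big|_{z=0}\cdot T^k.
\]
As $[d] \in \Lambda^\times$ and $p^a T^k \in \gothm_\Lambda^{a+k}$, the $k$-th term lies in $\gothm_\Lambda^{\max\{m-\nu_0-k,\,0\}+k} = \gothm_\Lambda^{\max\{m-\nu_0,\,k\}} \subseteq \gothm_\Lambda^{\max\{m-\nu_0,\,0\}}$, whence $P_{m,n}(\delta_p) \in \gothm_\Lambda^{\max\{m-\nu_0,0\}}$, which is precisely the claim in each case. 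I expect the only delicate point to be verifying that $s(z)$ and $w(z)$ have \emph{precisely} the coefficient patterns demanded by Lemmas~\ref{L:tilted degree estimate I} and~\ref{L:tilted degree estimate II} — in particular that the powers of $q$ (which equals $4$, not $2$, when $p=2$) land in the right places so that the $p=2$ case is no worse than the odd one; everything else is formal manipulation with $\tilde{\Delta}$ and the identity $p^aT^b \in \gothm_\Lambda^{a+b}$ that converts $p$-powers (from tilted degree) and $T$-powers (from $(1+T)^{s(z)}$) into $\gothm_\Lambda$-powers.
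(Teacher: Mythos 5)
Your argument is correct and follows the paper's proof essentially verbatim: both factor $[cz+d]$ as a $\Lambda^\times$-constant times $(1+T)^{h(z)}$ with $h$ of the shape required by Lemma~\ref{L:tilted degree estimate II} (the paper uses $[d_0](1+T)^{g(z)}$ with $g(z)=\log((cz+d)/d_0)/q$, you use $[d](1+T)^{s(z)}$ with $s(z)=\frac1q\log(1+cz/d)$ --- same thing up to an additive constant absorbed into the unit), then apply Lemmas~\ref{L:tilted degree estimate I} and~\ref{L:tilted degree estimate II} to $\binom{f(z)}{n}$ and $\binom{h(z)}{r}$ respectively, and assemble via Lemma~\ref{L:tilted degree additive} and Definition-Proposition~\ref{DP:tilted degree}. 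One small arithmetic slip worth fixing: the coefficient of $z^j$ in $s(z)$ has valuation $jv(c)-v(j)-v(q)\geq (j-1)v(q)-v(j)$, not $\geq (j-1)v(q)$ as you wrote (take $j=p$, $v(c)=v(q)=1$), but since $v(q)\geq 1$ the corrected bound is still $\geq (j-1)-v(j)$, which is exactly what the shape of Lemma~\ref{L:tilted degree estimate II} requires, so the conclusion is unaffected.
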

\begin{proof}
The coefficient $P_{m,n}(\delta_p)$ for sure belongs to $\Lambda$ because the operator $||^{[-]}_{\delta_p}$ preserves $\calC(\ZZ_p ; \Lambda)$.
This explains the max on the exponents.

Put $f(z) = (az+b)/(cz+d) \in \ZZ_p\llbracket z\rrbracket$.
Write $d = d_0 \cdot \langle d \rangle$ with $d_0 \in \Delta = (\ZZ/q\ZZ)^\times $ and $\langle d\rangle \in (1+q\ZZ_p)^\times$, so that $[cz+d]$ can be written as
\[
[d_0] \cdot \big[(cz+d)/d_0\big] = [d_0] \cdot (1+T)^{\log((cz+d)/d_0)/q}.
\]
Put $g(z) = \log((cz+d)/d_0)/q$; it is of the form considered in Lemma~\ref{L:tilted degree estimate II}.
Now we have
\[
\tilde \Delta^{(m)} \bigg(
\binom{(az+b)/(cz+d)}{n}\big[cz+d\big]
\bigg)\bigg|_{z = 0}  = [d_0] \cdot \sum_{r \geq 0} T^r \cdot \tilde \Delta^{(m)}\bigg(
\binom{f(z)}{n}\binom{g(z)}r 
\bigg)\bigg|_{z = 0}.
\]
\begin{itemize}
\item
By Lemma~\ref{L:tilted degree estimate II}, $\binom{g(z)}r$ has tilted degree $\leq r$.

\item
In case (1), we have $f(z) \in \ZZ_p\llbracket pz\rrbracket$. So by Lemma~\ref{L:tilted degree estimate I}, 
$\binom {f(z)}n$ has tilted degree $\leq \lfloor \frac np\rfloor$.

\item
In case (2), note that $f(z)$ is of the form considered in Lemma~\ref{L:tilted degree estimate II}. Thus $\binom {f(z)}n$ has tilted degree $\leq n$.
\quash{we can write $f(z)$ as $f_1(z) + f_2(z)$, where $f_1(z) = az/(cz+d)$ is of the form considered in Lemma~\ref{L:tilted degree estimate II} and $f_2(z) = b/(cz+d) \in \ZZ_p\llbracket pz\rrbracket$ is of the form considered in Lemma~\ref{L:tilted degree estimate I}.
We may then use the binomial identity~\eqref{E:binomial identity} to break up $\binom{f_1(z) + f_2(z)}n$ into the sum of the products $\binom{f_1(z)}i \binom{f_2(z)}{n-i}$ for $i=0, \dots, n$.
Then we can combine the bound in Lemma~\ref{L:tilted degree estimate I} (for  $\binom{f_2(z)}{n-i}$) and in Lemma~\ref{L:tilted degree estimate II} (for  $\binom{f_1(z)}i$) using Lemma~\ref{L:tilted degree additive}, to see that $\binom {f(z)}n$ has tilted degree $\leq n$.}
\end{itemize}

Using Lemma~\ref{L:tilted degree additive} again, we see that  the tilted degree of $\binom{f(z)}n \binom{g(z)}r$ is $\leq r + \lfloor \frac np\rfloor$ in case (1), and is $\leq r + n$ in case (2).
So the $T^r$-coefficients of \eqref{E:Pmn} has valuation at least
\[
m - \Big\lfloor \frac np\Big\rfloor - r \textrm{ in case (1)}, \quad \textrm{and}\quad m-n-r \textrm{ in case (2)}.
\]
This is exactly what we need to prove.
\end{proof}

\quash{\begin{remark}
\label{R:Up not compact}
Note that the bound on the coefficients $P_{m,n}(\delta_p) \in \gothm_\Lambda^{\max\{m-\lfloor n/p\rfloor, 0\}}$ for $\delta_p \in \big(\begin{smallmatrix} p\ZZ_p&\ZZ_p\\p\ZZ_p&\ZZ_p^\times\end{smallmatrix}\big)$ does \emph{not} imply that the $U_p$-action on $S^{D}_\inte$ is compact (in the sense of Definition~\ref{D:compact operator}).  For example, modulo $\gothm_\Lambda$, the bound does not show that $P_{m,n}(\delta_p)$ has finite rank.
In fact, it seems that the $U_p$-operator may not be compact (see Example~\ref{Ex:Up not compact}).
We will explain an approach to handle this technical issue in the last section.
\end{remark}}

\begin{lemma}
The rigid space associated to the ring 
\[
\Lambda^{>1/p}: = \Lambda\llbracket pT^{-1}\rrbracket = \ZZ_p\llbracket T, pT^{-1} \rrbracket \otimes_{\ZZ_p} \ZZ_p [ \Delta]
\]
is $\calW^{>1/p}$.
The ideal $\gothm_\Lambda \Lambda^{>1/p}$ is the same as the principal ideal $(T)$.
\end{lemma}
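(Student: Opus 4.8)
The second assertion is immediate and I would dispatch it first: $\gothm_\Lambda\Lambda^{>1/p}=(p,T)\Lambda^{>1/p}$, and since $p=T\cdot(pT^{-1})$ with $pT^{-1}\in\Lambda^{>1/p}$ we get $p\in(T)\Lambda^{>1/p}$, whence $(p,T)\Lambda^{>1/p}=(T)\Lambda^{>1/p}$ (the reverse inclusion being trivial as $T\in\gothm_\Lambda$).

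For the first assertion, the plan is to realize $\Lambda^{>1/p}$ as a ring to which the standard construction of the rigid generic fibre of a formal scheme applies, and then to match the resulting admissible affinoid exhaustion with the exhaustion of $\calW^{>1/p}$ by closed subannuli. Writing $W:=pT^{-1}$, unwinding the notation $\ZZ_p\llbracket T,pT^{-1}\rrbracket$ gives $\Lambda^{>1/p}\cong\ZZ_p[\Delta]\llbracket T,W\rrbracket/(TW-p)$; this is Noetherian, complete for the $I_0$-adic topology with $I_0:=(T,W)=(T,pT^{-1})$, and $p=TW\in I_0^2$, so $p$ is topologically nilpotent. Berthelot's construction then presents the rigid space associated to $\Lambda^{>1/p}$ as an admissible increasing union $\bigcup_{n\geq 2}\mathcal U_n$, where $\mathcal U_n$ is the affinoid subdomain cut out by $|T|\leq|p|^{1/n}$ and $|pT^{-1}|\leq|p|^{1/n}$, with affinoid algebra $\big(\Lambda^{>1/p}\langle \tfrac{T^n}{p},\tfrac{(pT^{-1})^n}{p}\rangle\big)[\tfrac1p]$.

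Next I would use the morphism $(\Spf\Lambda^{>1/p})^{\rig}\to(\Spf\Lambda)^{\rig}=\calW$ induced by $\Lambda\hookrightarrow\Lambda^{>1/p}$ and check that it is an open immersion onto $\calW^{>1/p}$. Since $v(p)=1$, the inequalities defining $\mathcal U_n$ read $v(T)\geq 1/n$ and $v(pT^{-1})=1-v(T)\geq 1/n$; thus $\mathcal U_n$ maps isomorphically onto the closed subannulus $\{x\in\calW:\tfrac1n\leq v(T_x)\leq 1-\tfrac1n\}$ of weight space. Here the inverse is well defined because on that subannulus $T$ is invertible with $pT^{-1}$ power bounded — which is precisely the point of adjoining $pT^{-1}$ — and the two structure sheaves match by a routine comparison of completed localizations. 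Taking the union over $n$ yields
\[
(\Spf\Lambda^{>1/p})^{\rig}\;\cong\;\bigcup_{n\geq 2}\Big\{x\in\calW:\tfrac1n\leq v(T_x)\leq 1-\tfrac1n\Big\}\;=\;\{x\in\calW:\,0<v(T_x)<1\},
\]
and the right-hand side is exactly $\calW^{>1/p}$, the condition $v(T_x)>0$ being automatic on $\calW$; running over the $\varphi(q)$ components indexed by characters of $\Delta$ completes the identification.

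The only real work lies in the last two paragraphs: recalling and fixing conventions for the rational-subdomain description of the generic fibre, and verifying that each $\mathcal U_n$ genuinely is the weight subannulus $\{\tfrac1n\le v(T)\le 1-\tfrac1n\}$ together with the correct analytic structure. Once the affinoids are identified the passage to the union is formal, and no input beyond the standard formalism of admissible formal schemes and their generic fibres is needed.
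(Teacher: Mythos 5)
Your proof is correct and is the natural elaboration of the paper's one-line argument, which reads in full: ``This is clear, noting that $p = pT^{-1}\cdot T$.'' You use the same key identity $p = (pT^{-1})\cdot T$ to dispose of the second assertion, and for the first you spell out the standard Berthelot generic-fiber computation for $\ZZ_p[\Delta]\llbracket T,W\rrbracket/(TW-p)$ that the authors evidently take for granted.
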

\begin{proof}
This is clear, noting that $p = pT^{-1} \cdot T$.
\end{proof}

In the following, we will work over $\Lambda^{>1/p}$ instead.

\begin{theorem}
\label{T:HP lower bound}
Recall that $t = \# D^\times \backslash (D \otimes \AAA_f)^\times / K^p\Iw_q$.
Let $P = (P_{m,n})_{m,n\in \ZZ_{\geq 0}}$ denote the infinite matrix of the $U_p$-action with respect to the basis
\[
1_0, \dots, 1_{t-1}, z_0, \dots, z_{t-1}, \tbinom {z_0}2, \dots, \tbinom{z_i}{2}, \dots,
\]
as in Proposition~\ref{P:Up operator agrees}. The characteristic power series 
\[
\Char(P): = \lim_{n \to \infty} \det\big(1- X(P_{i,j})_{i,j = 0, \dots, n-1}\big) =  \sum_{n \geq 0}c_n X^n  \in \Lambda\llbracket X \rrbracket\]
 is well defined.
Moreover, we have
\begin{equation}
\label{E:HP bound}
c_n \in T^{\lambda(n)} \cdot \Lambda^{>1/p} \textrm{ for }n \in \ZZ_{\geq 0},\footnote{In the very  recent preprint \cite{johansson-newton} of Johansson and Newton, they gave a more conceptual proof of the estimate in this Theorem.}
\end{equation}
where $\lambda(0) = 0, \lambda(1), \dots$ is a sequence of integers determined by
\[
\lambda(i+1) - \lambda(i) = \Big\lfloor \frac it\Big\rfloor - \Big\lfloor \frac i{pt}\Big\rfloor.
\]
\end{theorem}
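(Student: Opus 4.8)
The plan is to reduce the whole statement to the coefficientwise bound of Proposition~\ref{P:combined estimate}(1), applied entry by entry to the matrix $P$, followed by an elementary analysis of the principal minors. \emph{Step 1: the shape of $P$.} Index the chosen basis by $N=mt+i$ with $0\le i\le t-1$, so that the $N$-th vector is $\binom{z_i}{m}$ and $m=\lfloor N/t\rfloor$. By Proposition~\ref{P:explicit Up}, the entry $P_{N,N'}$ with $N'=nt+i'$, $n=\lfloor N'/t\rfloor$, is a sum of at most $p$ terms $P_{m,n}(\delta_p)$ with $\delta_p\in\big(\begin{smallmatrix}p\ZZ_p&\ZZ_p\\q\ZZ_p&\ZZ_p^\times\end{smallmatrix}\big)$; hence Proposition~\ref{P:combined estimate}(1) gives
\[
P_{N,N'}\in\gothm_\Lambda^{e(N,N')},\qquad e(N,N'):=\max\Big\{\big\lfloor\tfrac Nt\big\rfloor-\big\lfloor\tfrac{N'}{pt}\big\rfloor,\ 0\Big\},
\]
using $\big\lfloor\lfloor N'/t\rfloor/p\big\rfloor=\lfloor N'/(pt)\rfloor$. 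Note that the $\gothm_\Lambda$-adic order is superadditive under multiplication, and that $e(N,N')\ge e(N,N)=\lfloor N/t\rfloor-\lfloor N/(pt)\rfloor$ for every $N'\le N$.

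\emph{Step 2: convergence.} The coefficient of $X^n$ in $\det\big(I_N-X(P_{i,j})_{i,j<N}\big)$ is $(-1)^n$ times the sum of the $n\times n$ principal minors of $(P_{i,j})_{i,j<N}$; the increment from $N$ to $N+1$ is a finite sum of minors whose index set contains $N$. In the Leibniz expansion of such a minor the factor sitting in row $N$ has $\gothm_\Lambda$-adic order $\ge e(N,N)=\lfloor N/t\rfloor-\lfloor N/(pt)\rfloor$, and the remaining factors have order $\ge0$, so the whole increment lies in $\gothm_\Lambda^{\lfloor N/t\rfloor-\lfloor N/(pt)\rfloor}$. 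Since this exponent tends to $\infty$ as $N\to\infty$ and $\Lambda$ is $\gothm_\Lambda$-adically complete, the sequence converges, so $\Char(P)$ and the $c_n\in\Lambda$ are well defined.

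\emph{Step 3: the bound.} Write $c_n=(-1)^n\sum_S\det\big((P_{s,s'})_{s,s'\in S}\big)$, the sum over $n$-element subsets $S=\{s_1<\cdots<s_n\}$ of $\ZZ_{\ge0}$. By superadditivity, the Leibniz monomial of a fixed minor attached to $\sigma\in S_n$ lies in $\gothm_\Lambda^{e_\sigma}$ with
\[
e_\sigma=\sum_{i=1}^n e(s_i,s_{\sigma(i)})\ \ge\ \sum_{i=1}^n\Big(\big\lfloor\tfrac{s_i}t\big\rfloor-\big\lfloor\tfrac{s_{\sigma(i)}}{pt}\big\rfloor\Big)=\sum_{i=1}^n\Big(\big\lfloor\tfrac{s_i}t\big\rfloor-\big\lfloor\tfrac{s_i}{pt}\big\rfloor\Big),
\]
the last equality because $\sigma$ permutes the $s_i$, so the sums of $\lfloor s_{\sigma(i)}/(pt)\rfloor$ and of $\lfloor s_i/(pt)\rfloor$ coincide. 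The function $s\mapsto\lfloor s/t\rfloor-\lfloor s/(pt)\rfloor$ is non-decreasing on $\ZZ_{\ge0}$, since the second summand jumps only at multiples of $pt$, all of which are also jumps of the first; combined with $s_i\ge i-1$ this gives
\[
\sum_{i=1}^n\Big(\big\lfloor\tfrac{s_i}t\big\rfloor-\big\lfloor\tfrac{s_i}{pt}\big\rfloor\Big)\ \ge\ \sum_{j=0}^{n-1}\Big(\big\lfloor\tfrac jt\big\rfloor-\big\lfloor\tfrac j{pt}\big\rfloor\Big)=\lambda(n),
\]
the last equality being the telescoped form of the recursion defining $\lambda$. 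Hence $c_n\in\gothm_\Lambda^{\lambda(n)}$, and since $\gothm_\Lambda\Lambda^{>1/p}=(T)$ forces $\gothm_\Lambda^{\lambda(n)}\Lambda^{>1/p}=(T^{\lambda(n)})$, we conclude $c_n\in T^{\lambda(n)}\Lambda^{>1/p}$, which is \eqref{E:HP bound}.

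The genuine analytic input is Proposition~\ref{P:combined estimate}, which is already in hand; the rest is bookkeeping. The one step that deserves a little care is the monotonicity of $s\mapsto\lfloor s/t\rfloor-\lfloor s/(pt)\rfloor$, together with the consistent translation between the single index $N$ on the basis $1_0,\dots,z_0,\dots,\binom{z_0}2,\dots$ and the pair (Mahler degree, direct summand). I do not expect any essential obstacle beyond this.
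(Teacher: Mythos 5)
Your argument is correct, and it reaches the stated bound by a genuinely different route than the paper. Where you run a direct Leibniz expansion of the principal minors, bound each monomial $\prod_i P_{s_i,s_{\sigma(i)}}$ factor by factor via Proposition~\ref{P:combined estimate}(1), and then use the rearrangement identity $\sum_i\lfloor s_{\sigma(i)}/(pt)\rfloor=\sum_i\lfloor s_i/(pt)\rfloor$ together with the monotonicity of $s\mapsto\lfloor s/t\rfloor-\lfloor s/(pt)\rfloor$ and the inequality $s_i\ge i-1$, the paper instead works over $\Lambda^{>1/p}$ (where $T$ is a unit) and conjugates $P$ by the diagonal matrix $\Diag(1,\dots,1,T,\dots,T,T^2,\dots)$ (with each power of $T$ repeated $t$ times). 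After conjugation the $(m,n)$-entry lands in $T^{\max\{\lfloor n/t\rfloor-\lfloor n/pt\rfloor,\lfloor n/t\rfloor-\lfloor m/t\rfloor\}}\Lambda^{>1/p}$, so the entire $n$th column is divisible by $T^{\lfloor n/t\rfloor-\lfloor n/pt\rfloor}$, and the claimed bound on $c_n$ follows immediately from the multilinearity of the determinant in the columns. The conjugation trick concentrates all the bookkeeping into a column-wise divisibility, avoiding both the rearrangement step and the monotonicity lemma; your approach avoids passing to $\Lambda^{>1/p}$ and yields the marginally sharper-looking integral statement $c_n\in\gothm_\Lambda^{\lambda(n)}$ directly (which, as you note, is equivalent to the paper's formulation upon tensoring to $\Lambda^{>1/p}$). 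The convergence arguments differ as well but are essentially interchangeable: you bound the increments of the partial sums using the factor in the new row $N$, while the paper observes that modulo $\gothm_\Lambda^r$ the matrix is strictly upper triangular outside a finite block of size $\lfloor ptr/(p-1)\rfloor$, so the partial characteristic polynomials stabilize modulo $\gothm_\Lambda^r$.
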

\begin{proof}
Combining the estimate in Proposition~\ref{P:combined estimate} and the explicit description of the $U_p$-operator in Proposition~\ref{P:explicit Up}, we see that the $(m,n)$-entry of the infinite matrix $P$ satisfies 
\begin{equation}
\label{E:bound Pmn}
P_{m,n} \in
\gothm_\Lambda^{\max\{\lfloor m/t\rfloor - \lfloor n/pt\rfloor, 0\}}.
\end{equation}
In particular, modulo $\gothm_\Lambda^r$ for each $r\in \mathbb{N}$, the infinite matrix is strict upper triangular except the first $\lfloor ptr/(p-1)\rfloor\times\lfloor ptr/(p-1)\rfloor$-minor. Then modulo $\gothm_\Lambda^r$, the characteristic polynomial of the first $s\times s$-minor is the same as the characteristic polynomial of the first $\lfloor ptr/(p-1)\rfloor\times\lfloor ptr/(p-1)\rfloor$-minor for $s\geq\lfloor ptr/(p-1)\rfloor$. This implies that $\Char(P) \in \Lambda\llbracket X\rrbracket$ is well defined.

To compute $\Char(P)$, we work with a bigger coefficient ring  $\Lambda^{>1/p}$.
So we have
\begin{equation}
\label{E:inverting T}
\gothm_\Lambda^a \cdot T^b \Lambda^{>1/p} = T^{a+b}\Lambda^{>1/p} \textrm{ for any integers }a, b \textrm{ such that } a, a+b \in \ZZ_{\geq 0}.
\end{equation}
We now conjugate the matrix $P$ by the infinite diagonal matrix whose diagonal entries are
\[
\underbrace{1,\dots, 1}_{t}, \underbrace{T, \dots, T}_{t}, \underbrace{T^2, \dots, T^2}_{t}, \dots;
\]
let $P' = (P'_{m,n})_{m,n \in \ZZ_{\geq 0}}$ denote the matrix we get this way.
Then we have
\[
P'_{m,n} \in \gothm_\Lambda^{\max\{\lfloor m/t\rfloor - \lfloor n/pt\rfloor, 0\}} \cdot T^{\lfloor n/t\rfloor - \lfloor m/t\rfloor} \Lambda^{>1/p} \stackrel{\eqref{E:inverting T}} \subseteq T^{\max\{\lfloor n/t\rfloor - \lfloor n/pt\rfloor,\lfloor n/t\rfloor - \lfloor m/t\rfloor \}}\Lambda^{>1/p}.
\]
In particular, the entries of $P'$ in the $n$-th column all lie in $T^{\lfloor n/t\rfloor - \lfloor n/pt\rfloor}\Lambda^{>1/p}$.
So $\Char (P) = \Char(P')$ has the property given in \eqref{E:HP bound}.
\end{proof}

\begin{remark}
Comparing with \cite{wan-xiao-zhang}, the major advantage of our estimate is that the basis we choose in this paper allows us to extend the estimate to the entire annuli, as opposed to just small disks near the boundary of weight space in \cite{wan-xiao-zhang}.
Moreover, we shall see later that the estimate in Theorem~\ref{T:HP lower bound} is already \emph{sharp} for infinitely many $n$.  This magical fact allows us to deduce the strong Theorem~\ref{T:main theorem}.
\end{remark}

By Proposition~\ref{P:Up operator agrees} (where the missing condition is checked by Theorem~\ref{T:HP lower bound}), the characteristic power series of $U_p$ on the space of overconvergent automorphic forms $S^{D, \dagger, m}_{[-]_m}$ (for any $m$) is $\sum_{n \geq 0}c_nX^n \in \Lambda\llbracket X\rrbracket$ with $c_n$ bounded as in \eqref{E:HP bound}.

Now, we fix a character $\omega$ of $\Delta$.
By abuse of notation, we still use $c_n$ to denote its image under the quotient map $\Lambda \to \ZZ_p\llbracket T\rrbracket$ by evaluating $\Delta$ using $\omega$. Write $c_n(T) = \sum_{m \geq 0} b_{n,m} T^m$ for $b_{n,m} \in \ZZ_p$.

\begin{corollary}
\label{C:lower bound}

For $T\in\mathbb{C}_p$ with $0<v(T)<1$,  we have 
$v(c_n(T))\geq\lambda(n)v(T)$ for every $n\geq0$, with equality holding if and only if $b_{n,\lambda(n)}\in\ZZ_p^\times$. Moreover, if $b_{n,\lambda(n)}\notin\ZZ_p^\times$, then 
\[
v(c_n(T))\geq\lambda(n)v(T)+\min\{v(T), 1-v(T)\}.
\] 
As a consequence, the Newton polygon of $\sum_{n\geq0}c_n(T)X^n$ always lies above the polygon with vertices $(n,\lambda(n)v(T))$ for all $n\geq0$.  
\end{corollary}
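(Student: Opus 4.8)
The plan is to deduce everything directly from Theorem~\ref{T:HP lower bound}, whose containment $c_n\in T^{\lambda(n)}\Lambda^{>1/p}$ (pushed forward along $\omega$) controls the coefficients $b_{n,m}$ of $c_n(T)=\sum_{m\geq0}b_{n,m}T^m$. First I would unwind this: writing $c_n=T^{\lambda(n)}d_n$ with $d_n=\sum_{j\geq0}g_j(T)(pT^{-1})^j$ and $g_j\in\ZZ_p\llbracket T\rrbracket$, we get $c_n=\sum_{j\geq0}p^jT^{\lambda(n)-j}g_j(T)$, so a monomial $T^m$ with $m<\lambda(n)$ receives contributions only from indices $j\geq\lambda(n)-m$, each lying in $p^{\lambda(n)-m}\ZZ_p$. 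Hence
\[
v(b_{n,m})\geq\max\{\lambda(n)-m,\,0\}\qquad\text{for all }m\geq0,
\]
the bound being automatic for $m\geq\lambda(n)$ since $b_{n,m}\in\ZZ_p$.

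Next, for $T\in\CC_p$ with $0<v(T)<1$ I would estimate $v(c_n(T))$ term by term. The $m$-th term has valuation at least $\max\{\lambda(n)-m,0\}+mv(T)$, which for $m\leq\lambda(n)$ equals $\lambda(n)v(T)+(\lambda(n)-m)(1-v(T))$ and for $m\geq\lambda(n)$ is at least $mv(T)$; in both ranges $0<v(T)<1$ forces this to be $\geq\lambda(n)v(T)$, giving the basic inequality. If $b_{n,\lambda(n)}\in\ZZ_p^\times$, then the $m=\lambda(n)$ term has valuation exactly $\lambda(n)v(T)$ while every other term has strictly larger valuation (by the two expressions above together with $m\neq\lambda(n)$ and $0<v(T)<1$), so the ultrametric inequality forces $v(c_n(T))=\lambda(n)v(T)$. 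If instead $b_{n,\lambda(n)}\notin\ZZ_p^\times$, then $v(b_{n,\lambda(n)})\geq1$ and the same case analysis shows that every term — now including $m=\lambda(n)$ — has valuation $\geq\lambda(n)v(T)+\min\{v(T),1-v(T)\}$; this yields the refined bound and, since $\min\{v(T),1-v(T)\}>0$, also the converse half of the ``if and only if''.

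For the Newton polygon assertion I would first record that $\lambda$ is convex: writing $i=pta+b$ with $0\leq b<pt$ gives $\lfloor i/t\rfloor-\lfloor i/(pt)\rfloor=(p-1)a+\lfloor b/t\rfloor$, which is non-decreasing in $i$, so the increments $\lambda(i+1)-\lambda(i)$ are non-decreasing and the polygon through the vertices $(n,\lambda(n)v(T))$ is convex. Since the Newton polygon of $\sum_{n\geq0}c_n(T)X^n$ is the lower convex hull of the points $(n,v(c_n(T)))$, each of which lies on or above this convex polygon by the bound just proved, the Newton polygon lies on or above it as well.

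The argument is routine; the only delicate point is the first step, where one must simultaneously use that $c_n\in\Lambda$ — so that $c_n(T)$ really is of the form $\sum_{m\geq0}b_{n,m}T^m$ with no negative powers of $T$ — and that it lies in $T^{\lambda(n)}\Lambda^{>1/p}$ in order to pin down the valuations of the low-degree coefficients $b_{n,m}$ with $m<\lambda(n)$.
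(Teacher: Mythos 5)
Your proof is correct and follows essentially the same route as the paper: extract $v(b_{n,m})\geq\max\{\lambda(n)-m,0\}$ from the containment $c_n\in T^{\lambda(n)}\Lambda^{>1/p}$, then compare the term-by-term valuations $\max\{\lambda(n)-m,0\}+mv(T)$ to locate the unique potential minimizer at $m=\lambda(n)$ and invoke the ultrametric inequality. The only additions over the paper's terse ``the rest is clear'' are your explicit derivation of the refined bound with the $\min\{v(T),1-v(T)\}$ gap and your check that $\lambda$ has non-decreasing increments (so the comparison polygon is in fact convex) — both are correct and worth recording, but they are exactly the steps the paper is tacitly asking the reader to fill in.
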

\begin{proof}

First note that if $\sum_{m\in\ZZ}d_m T^m\in\Lambda^{>1/p}$, then $v(d_m)\geq\max\{0,-m\}$.
Combining this fact with (\ref{E:HP bound}), we get
\[
v(b_{n,m}) \geq \max\{ \lambda(n)-m, 0\}.
\]
For $T \in \mathbb{C}_p$ with $0<v(T) <1$, we deduce 
\begin{equation}
\label{E:lower bound}
v(b_{n,m} T^m) \geq \max\{ \lambda(n)-m, 0\} +m v(T)\geq \lambda(n)v(T),
\end{equation}
with the second equality holding if and only if $m=\lambda(n)$. It follows that we always have 
$v(b_{n,m} T^m) \geq \lambda(n)v(T)$, 
with equality holding if and only if $m=\lambda(n)$ and  $b_{n, \lambda(n)}$ is a $p$-adic unit in $\ZZ_p$. The rest of the corollary is clear. 
\end{proof}

We call the polygon given in Corollary \ref{C:lower bound} the \emph{lower bound polygon} of $\sum_{n\geq0}c_n(T)X^n$. 

In the next we consider the ordinary locus of the (entire) spectral curve. In fact, it is very well known among the experts that the ordinary locus of the eigencurve is nothing but the Hida family of ordinary modular forms.  Consequently, it should be finite and flat over weight space. Because of the lack of proper reference, we give a proof about this fact in our case as follows.
\begin{theorem}
\label{T:hida}

Let $X^\ord$ denote the locus of the spectral curve $\Spc_D$ where $a_p(x)$ has valuation zero.
Then the following holds:
\begin{enumerate}
\item 
$X^\ord$ is finite and flat over $\calW$;
\item
$X^\ord$ is a union of connected components of $\Spc_D$.
\end{enumerate}
\end{theorem}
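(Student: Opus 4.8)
The plan is to study the ``ordinary part'' of the characteristic power series, that is, the factor of $\Char(U_p; S^{D,\dagger,m}_{[-]_m}) = \sum_{n\geq 0} c_n X^n$ coming from the slope-zero part, and show it gives a finite flat piece of the spectral curve that is also open and closed. The first step is to identify the degree of the ordinary part over weight space. From the bound \eqref{E:HP bound}, $c_n \in T^{\lambda(n)}\Lambda^{>1/p}$; since $\lambda(n)$ is eventually strictly increasing (indeed $\lambda(i+1)-\lambda(i) = \lfloor i/t\rfloor - \lfloor i/pt\rfloor > 0$ once $i \geq t$), Corollary~\ref{C:lower bound} shows that over the boundary annuli the Newton polygon has only finitely many slope-zero sides. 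More to the point, I would argue that the number of unit coefficients among $c_0, c_1, \ldots$ is \emph{constant} as we move over all of $\calW$: the ordinary projector $e = \lim_{N} U_p^{N!}$ is defined on each $S^{D,\dagger,m}_{[-]_m}$ (compactness of $U_p$), commutes with the $\Lambda$-action, and cuts out a finite locally free $\Lambda$-submodule $e\cdot S^{D}_\inte$ (or rather its base changes), on which $U_p$ acts invertibly; its rank $d$ is the degree of $X^\ord$. One then sets $Q^\ord(X) = \det(I - X\,U_p \mid e S^D_\inte)$, a polynomial of degree $d$ in $\Lambda[X]$ whose leading coefficient is a unit (because $U_p$ is invertible on the ordinary part), and factors $\Char(U_p) = Q^\ord(X)\cdot R(X)$ with $R$ a power series whose Newton polygon over every point of $\calW$ has all slopes $>0$. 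This factorization is the ``Hida idempotent'' decomposition; the key input is that the ordinary projector exists integrally, which follows because $U_p$ preserves the integral structure $S^D_\inte$ (Proposition~\ref{P:explicit Up} shows $U_p$ is given by an integral matrix) and $e = \lim U_p^{N!}$ converges $\gothm_\Lambda$-adically on the finite-rank ``unit part''.

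With the factorization in hand, part (1) is essentially formal: $X^\ord$ is cut out inside $\calW\times\GG_m^\rig$ by $Q^\ord(X)=0$, and since $Q^\ord$ is a \emph{distinguished polynomial up to a unit} (monic after inverting its unit leading coefficient) over $\Lambda$, the zero locus is finite and flat over $\calW = \Spec^{\rig}\Lambda$ of degree $d$. Concretely, $X^\ord \cong \Spec^{\rig}\big(\Lambda[X]/(Q^\ord(X))\big)$, a finite free $\Lambda$-module of rank $d$, hence finite flat over $\calW$; and because the leading coefficient of $Q^\ord$ is a unit, the map to $\GG_m^\rig$ lands in $|a_p|=1$, as required.

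For part (2), I would show $X^\ord$ is both open and closed in $\Spc_D$. Closedness is automatic since it is a Zariski-closed subset (zero locus of $Q^\ord$). For openness, the point is that $Q^\ord$ and $R$ generate the unit ideal in $\calO_{\Spc_D}$-neighborhoods: over any affinoid $\calW^{\leq r}\times\{|x|\leq \rho\}$ with $\rho$ close to $1$, the slopes coming from $R$ are bounded below by a positive constant, so $R$ is a unit on the locus $|a_p|=1$; hence near $X^\ord$ the equation of $\Spc_D$ is just $Q^\ord=0$, showing $X^\ord$ is a union of connected components. Equivalently, $\Char(U_p) = Q^\ord\cdot R$ is a factorization into relatively prime factors over each affinoid piece of $\calW$ (their resultant is a unit because $Q^\ord$ has unit constant-term-complement while $R(0)=1$ has all its other roots of positive slope), and by the theory of the relative spectral curve this induces a decomposition $\Spc_D^{\leq r} = X^\ord \sqcup \Spc_D^{\leq r, >0}$ into open and closed pieces, compatibly in $r$.

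The main obstacle I anticipate is making the ordinary projector argument work \emph{integrally and uniformly in the weight}, i.e.\ over $\Lambda$ (or $\Lambda^{>1/p}$) rather than fiber-by-fiber over $\CC_p$-points: one must check that $e = \lim_N U_p^{N!}$ converges in the appropriate topology on $\End_\Lambda(S^D_\inte)$ and that its image is finite free of \emph{constant} rank — the potential subtlety being whether the rank could jump at non-classical weights or at the boundary. I would handle this by invoking \cite[Lemma~4]{buzzard2} (the characteristic series is independent of $m$ and interpolates) together with the slope-zero part of the Newton polygon bound from Corollary~\ref{C:lower bound}, which is \emph{uniform} over $\calW^{>1/p}$, and then dealing with the center of weight space $\calW^{\leq 1/p}$ by the usual Hida-theory compactness argument (the ordinary part of a compact operator on a Banach module over an affinoid is finite projective). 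Stitching the two regions together gives the finite flat $X^\ord$ over all of $\calW$.
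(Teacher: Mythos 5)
Your proposal takes a genuinely different route from the paper. You propose the classical Hida-theoretic argument: form the ordinary projector $e = \lim_N U_p^{N!}$, factor $\Char(U_p) = Q^{\ord}(X)\cdot R(X)$ with $Q^{\ord}$ a degree-$d$ polynomial with unit leading coefficient, and read $X^\ord$ off as the zero locus of $Q^\ord$. The paper instead argues \emph{directly on the Newton polygon}, with no projector at all. Working over a fixed weight component $\omega$ with characteristic series $\sum_{n\ge0} c_n(T)X^n \in \ZZ_p\llbracket T\rrbracket\llbracket X\rrbracket$, it sets $d$ to be the maximal index for which $c_d(T)$ is a unit in $\ZZ_p\llbracket T\rrbracket$, i.e.\ $b_{d,0}\in\ZZ_p^\times$ (this exists and satisfies $d\le t$ because the bound $c_n\in T^{\lambda(n)}\Lambda^{>1/p}$ forces $v(b_{n,0})\ge\lambda(n)\ge 1$ for $n>t$). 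Then, by inspecting the same coefficient bound at \emph{every} specialization $T_\chi$, the Newton polygon of $\sum c_n(T_\chi)X^n$ has exactly $d$ slope-zero segments while every vertex with $x$-coordinate $>d$ has $y$-coordinate $\ge \min\{1, v(T_\chi)\}$. That single integer $d$, computed once from the shape of the $c_n$'s, is the degree of $X^\ord_\omega$ at every fiber; bounding the first positive slope below $\min\{1, v(T_\chi)\}$ uniformly over affinoid subdomains gives the open-and-closed statement. The paper's argument is more elementary and, importantly, needs nothing beyond what is already established in Corollary~\ref{C:lower bound}.

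The concrete difficulty with your projector approach — which the paper's direct route is deliberately designed to bypass — is that the paper explicitly cautions (Remark~\ref{R:Caveat} and the example in Section~\ref{Sec:integral model}) that $U_p$ is very likely \emph{not} compact on $S^D_\inte$. So the claim that ``$e = \lim U_p^{N!}$ converges $\gothm_\Lambda$-adically'' with finite free image over $\Lambda$ is precisely the thing that cannot be taken for granted here; the standard Hida compactness argument applies to Banach modules over affinoids, not to the integral model over $\Lambda$. You do flag the rank-constancy subtlety, but your proposed stitching of $\calW^{>1/p}$ with $\calW^{\le 1/p}$ quietly reintroduces the same issue: why should the ranks computed on the two regions match? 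The paper's answer is to never leave the world of the explicit coefficient bound — the slope-zero multiplicity at every specialization is the single integer $d$, read off from $b_{n,0}$ — and this is the observation you should promote to the center of the argument rather than route through the projector. With that replacement your proof would essentially reduce to the paper's.
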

\begin{proof}
Applying Remark \ref{R:non-neat}, it reduces to the case that tame level is neat. Then it suffices to prove this over each connected component of $\calW$; so we fix a character $\omega$ of $\Delta$ throughout and denote by $[-]_\omega$ the universal character.
Let $X^\ord_{\omega}$ denote the corresponding components.
Recall that the characteristic power series of $U_p$ on $S^{D, \dagger}_{[-]_\omega}$ is equal to $\sum_{n=0}^\infty c_n(T)X^n$.
Let $d$ be the maximal index such that $c_d(T)$ is a unit in $\ZZ_p\llbracket T\rrbracket$, or equivalently, the constant term of $c_d(T)$ is a $p$-adic unit in $\ZZ_p$; such a $d$ must exist by Corollary \ref{C:lower bound}.
We claim that $X^\ord_{\omega}$ is finite and flat of degree $d$ over $\calW_{\omega}$.
Indeed, for each weight character $\chi: \ZZ_p^\times \to \CC_p^\times$ whose restriction to $\Delta$ is $\omega$, the Newton polygon of $\sum_{n=0}^\infty c_n(T_\chi)X^n$ has slope zero in the first $d$ segment and the vertices with $x$-coordinate strictly bigger than $d$ must have $y$-coordinate at least $\min\{1, v(T_\chi)\}$, and the corresponding slopes eventually tend to infinity.  This implies that  $X^\ord_{\omega}$ is finite and flat over $\calW_{\omega}$ of degree $d$, and is an affinoid subdomain when restricted to the fiber of each affinoid subdomain of $\calW_{\omega}$. Moreover, the first non-zero slope of the Newton polygon at $\chi$ can be bounded away from $0$ uniformly over any affinoid subdomain; so $X^\ord$ is disconnected from its complement.
\end{proof}

\begin{remark}
For a classical weight $(k,\psi)$, its fiber in $X^\ord$  exactly corresponds to the ordinary part of the space of automorphic forms $S^{D,\dagger}_{(k,\psi)}$.  By the Zariski density of classical weights in  weight space, one can show that $X^\ord$ is the same as the spectral Hida family (e.g. by the standard control theorem for Hida families (\cite[Theorem 7.1(5)]{hida})).
 \end{remark}

Recall that $r_\ord(\omega)$ denotes the dimension of the ordinary subspace of automorphic forms of weight $2$ and character $\omega$.

\begin{corollary}[Hida]
\label{C:hida}
The degree of $X^\ord_{\omega}$ over $\calW_{\omega}$ is $r_{\ord}(\omega)$. As a consequence, for any integer $k\in \ZZ$ and  any finite character $\psi$ of conductor $p^m$, the slope zero subspace of $S^{D,\dagger}_{(k,\psi)}$ has dimension
$r_{\ord}(\psi|_\Delta \cdot \omega_0^{k}).$
\end{corollary}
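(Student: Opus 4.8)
The plan is to identify the integer $d$ appearing in the proof of Theorem~\ref{T:hida}---the largest index for which $c_d(T)$ is a unit in $\ZZ_p\llbracket T\rrbracket$, which we showed there equals $\deg(X^\ord_\omega/\calW_\omega)$---with $r_\ord(\omega)$, by computing the fiber of $X^\ord_\omega\to\calW_\omega$ over a weight-$2$ point; the ``consequence'' will then follow immediately from finite flatness together with the same Newton polygon analysis.

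First I would locate the weight-$2$ point on $\calW_\omega$. The classical character $\chi_2:=(0,\omega)$ (with $\omega$ regarded as a finite character at $p$, of conductor dividing $q$) has $T_{\chi_2}=\omega(\exp(q))-1=0$, since $\exp(q)\in 1+q\ZZ_p$ and $\omega$ is trivial there; so $\chi_2$ is the center of the disk $\calW_\omega$. Since $X^\ord$ is finite flat over \emph{all} of $\calW$ by Theorem~\ref{T:hida} (not merely over the halo $\calW^{>1/p}$), its fiber over $\chi_2$ has length $d$. Concretely, specializing the characteristic power series $\sum_n c_n X^n$ at $T=0$ gives $\sum_{n\geq 0}b_{n,0}X^n$, and by the very definition of $d$ we have $b_{0,0}=1$, $b_{d,0}\in\ZZ_p^\times$, and $b_{n,0}\in p\ZZ_p$ for all $n>d$; hence its Newton polygon has slope $0$ on exactly its first $d$ segments. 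By Serre's spectral theory of the compact operator $U_p$ on $S^{D,\dagger,m}_{[-]_m}$---whose characteristic power series is $\sum_n c_n X^n$ by Proposition~\ref{P:Up operator agrees}---this number $d$ is the dimension of the slope-$0$ generalized eigenspace of $U_p$ acting on $S^{D,\dagger}_{(0,\omega)}$.

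Next I would invoke classicality: applying Proposition~\ref{P:classicality} with $k=0$ (so that a slope-$0$ eigenvalue automatically satisfies $0<k+1=1$), every slope-$0$ eigenform in $S^{D,\dagger}_{(0,\omega)}$ is classical, and conversely every classical ordinary form of weight $2$ and character $\omega$ lies in $S^{D,\dagger}_{(0,\omega)}$. Thus the slope-$0$ subspace of $S^{D,\dagger}_{(0,\omega)}$ is precisely the ordinary subspace of weight-$2$ automorphic forms of character $\omega$, which by definition has dimension $r_\ord(\omega)$; combined with the previous paragraph this gives $d=r_\ord(\omega)$, the first assertion. For the ``consequence'', fix $k\in\ZZ$ and a finite character $\psi$ of conductor $p^m$, and set $\omega:=\psi|_\Delta\cdot\omega_0^{k}$; then the classical weight $(k,\psi)$ lies on $\calW_\omega$. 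Since $X^\ord_\omega\to\calW_\omega$ is finite flat of degree $d=r_\ord(\omega)$, its fiber over $(k,\psi)$ has length $r_\ord(\omega)$; and arguing as above---using that the Newton polygon of $\sum_n c_n(T_{(k,\psi)})X^n$ again has slope $0$ on exactly its first $d$ segments, its later vertices lying at height $\geq\min\{1,v(T_{(k,\psi)})\}>0$ by Corollary~\ref{C:lower bound} (cf.\ the proof of Theorem~\ref{T:hida})---this length equals the dimension of the slope-$0$ subspace of $U_p$ on $S^{D,\dagger}_{(k,\psi)}$, which is the desired value $r_\ord(\psi|_\Delta\cdot\omega_0^{k})$.

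The step I expect to require the most care is the passage from ``length of the slope-$0$ part of the Newton polygon of the characteristic power series'' to ``dimension of the slope-$0$ subspace of $U_p$''. This is the classical $p$-adic Riesz--Serre theory of completely continuous operators, but one must be careful to apply it on the Banach space $S^{D,\dagger,m}_{[-]_m}$, where $U_p$ is genuinely compact---unlike on the integral model $S^{D}_\inte$, cf.\ Remark~\ref{R:Caveat}---to note that the slope-$0$ subspace and the characteristic power series do not depend on $m$ (the latter by \cite[Lemma~4]{buzzard2}, the former because that subspace consists of classical forms by Proposition~\ref{P:classicality}), and to match the Fredholm determinant with $\sum_n c_n X^n$ via Proposition~\ref{P:Up operator agrees}. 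One also needs, at the weight-$2$ point, the full strength of Theorem~\ref{T:hida}---finite flatness of $X^\ord$ over the \emph{entire} weight space---precisely because that point sits at $T=0$, outside every halo.
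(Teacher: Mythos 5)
Your proposal is correct and follows essentially the same route as the paper's own (very terse) proof: evaluate the finite-flat degree of $X^\ord_\omega$ at the weight-$2$ center of the disk $\calW_\omega$, identify the slope-$0$ part of the Newton polygon with the ordinary eigenspace of $U_p$ on $S^{D,\dagger}_\omega$ via Riesz--Serre theory, and then use classicality at weight $2$ to equate it with $r_\ord(\omega)$, with the consequence following from the observation that $(k,\psi)$ lies on $\calW_{\psi|_\Delta\omega_0^k}$. You make explicit the classicality step that the paper leaves implicit, which is a welcome clarification but not a different argument.
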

\begin{proof}  
In the course of the proof of Theorem \ref{T:hida}, we already saw that the degree of $X^\ord_{\omega}$ over $\calW_{\omega}$ is equal to the dimension of the ordinary subspace of $S^{D,\dagger}_{\chi}(K^p)$ for every weight character $\chi\in\calW_{\omega}$. In particular, we can choose $\chi=\omega$.  The rest part follows from the fact that $(k,\psi)$ belongs to the weight disk corresponding to $\psi|_\Delta \cdot \omega_0^{k}$.
\end{proof}

Let $m\in\NN_{\geq2}$, and let $\psi$ be a finite character of conductor $p^m$. For $k \in \ZZ_{\geq 0}$, using an isomorphism analogous to \eqref{E:explicit integral model}, we see that $S^D_{k+2}(K^p\Iw_{p^m};\psi)$ is isomorphic to 
the direct sum of $t$ copies of $\LP^{m-v(q), \deg \leq k}(\ZZ_p; E)$.  So in total, 
\begin{equation}\label{E:atkin-lehner}
\dim S^D_{k+2}(K^p\Iw_{p^m}; \psi) = (k+1)q^{-1}p^mt.
\end{equation}
To prove Theorem \ref{T:main theorem}, we also need the following result on Atkin--Lehner theory. 

\begin{proposition}[Atkin--Lehner]
\label{P:atkin-lehner}
We use $\alpha_0(\psi), \dots, \alpha_{(k+1)q^{-1}p^mt-1}(\psi)$ to denote the slopes of $U_p$ acting on $S^D_{k+2}(K^p\Iw_{p^m}, \psi)$ in non-decreasing order.
Then we have
\[
\alpha_i(\psi) = k+1-\alpha_{(k+1)q^{-1}p^mt-1-i}(\psi^{-1}) \quad \textrm{for }i =0, \dots, (k+1)q^{-1}p^mt-1.
\]
In particular,
the total sum of the $U_p$-slopes of $S^D_{k+2}(K^p\Iw_{p^m}; \psi) \oplus S^D_{k+2}(K^p\Iw_{p^m}; \psi^{-1})$ is
 $(k+1)^2q^{-1}p^mt$.
\end{proposition}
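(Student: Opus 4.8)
The plan is to prove the Atkin--Lehner duality $\alpha_i(\psi) = k+1-\alpha_{(k+1)q^{-1}p^mt-1-i}(\psi^{-1})$ by exhibiting an explicit pairing between the finite-dimensional spaces $S^D_{k+2}(K^p\Iw_{p^m};\psi)$ and $S^D_{k+2}(K^p\Iw_{p^m};\psi^{-1})$ under which the operators $U_p$ and $p^{k+1}\cdot(U_p')^{-1}$ are adjoint, where $U_p'$ is the $U_p$-operator on the $\psi^{-1}$-space (equivalently, one uses the Atkin--Lehner involution $w_{p^m}$ to relate the $U_p$-slopes to those of $U_p'$). Concretely, I would first recall that the $\Iw_{p^m}$-representation underlying weight $k+2$ classical forms with character $\psi$ is $\Ind_{B(\ZZ_p)}^{\Iw_q}(k,\psi)^{m,\alg}$, which is (via \eqref{E:m-loc analytic induced representation}) the space $\LP^{m-v(q),\deg\le k}(\ZZ_p;E)$ of locally-polynomial functions of degree $\le k$; this carries a natural $\Iw_{p^m}$-invariant pairing onto its dual, namely the one pairing $\LP^{\deg\le k}$ with itself via the ``leading coefficient'' map twisted by the determinant character $(\det)^{k}$, which intertwines the action twisted by $\psi$ with the contragredient action twisted by $\psi^{-1}$. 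The Atkin--Lehner element $w_{p^m}=\big(\begin{smallmatrix}0&1\\-p^m&0\end{smallmatrix}\big)$ (or $\big(\begin{smallmatrix}0&-1\\p^m&0\end{smallmatrix}\big)$) then induces an isomorphism $S^D_{k+2}(K^p\Iw_{p^m};\psi)\xrightarrow{\sim} S^D_{k+2}(K^p\Iw_{p^m};\psi^{-1})$, and the classical computation $w_{p^m}^{-1}U_pw_{p^m}=\langle p\rangle^{-1}\cdot p^{k+1}\cdot(U_p)^{\vee}$ on the dual (up to the Nebentypus unit, which has trivial valuation) shows that the eigenvalue $\lambda$ of $U_p$ on the $\psi$-space corresponds to the eigenvalue $p^{k+1}/\lambda$ on the $\psi^{-1}$-space.

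The key steps, in order: (i) identify the representation $\Ind_{B(\ZZ_p)}^{\Iw_q}(k,\psi)^{m,\alg}$ with $\Sym^k$ of the standard representation twisted by $\psi\circ\det$-type data, and write down the perfect $\Iw_{p^m}$-equivariant pairing $\langle\,,\,\rangle\colon \Sym^k\otimes\Sym^k\to \det^{\otimes k}$; (ii) translate this into a perfect pairing of automorphic form spaces $S^D_{k+2}(K^p\Iw_{p^m};\psi)\otimes S^D_{k+2}(K^p\Iw_{p^m};\psi^{-1})\to E$ using the definiteness of $D$ (the double-coset set $D^\times\backslash(D\otimes\AAA_f)^\times/K^p\Iw_{p^m}$ is finite, so integration is a finite sum, and neatness removes stabilizers); (iii) compute the adjoint of $U_p=\sum_j |^{(k,\psi)}_{v_j}$ under this pairing, using a decomposition of the double coset $\Iw_{p^m}\big(\begin{smallmatrix}p&0\\0&1\end{smallmatrix}\big)\Iw_{p^m}$ and the matrix identity $v_j^* \cdot(\text{scalar})= p\cdot v_j^{-1}$-type relation inside $\bfM_1$, to show $U_p^{\mathrm{adj}} = p^{k+1}U_p'^{-1}$ up to a unit; (iv) conclude that the multiset of $U_p$-slopes on the $\psi$-space is $\{k+1-\alpha : \alpha \text{ a }U_p'\text{-slope on the }\psi^{-1}\text{-space}\}$, which upon sorting both sequences into non-decreasing order yields the stated index reversal, since $\dim S^D_{k+2}(K^p\Iw_{p^m};\psi)=\dim S^D_{k+2}(K^p\Iw_{p^m};\psi^{-1})=(k+1)q^{-1}p^mt$ by \eqref{E:atkin-lehner}. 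The final ``in particular'' follows by summing: pairing the $i$-th smallest slope of the $\psi$-space with the $i$-th largest of the $\psi^{-1}$-space, the total $\sum_i \alpha_i(\psi)+\sum_i\alpha_i(\psi^{-1}) = (k+1)\cdot(k+1)q^{-1}p^mt$, and replacing $\psi$ by $\psi^{-1}$ shows the two sums are equal (the slope multisets of the $\psi$- and $\psi^{-1}$-spaces are interchanged by the same involution), hence each equals $\tfrac12(k+1)^2q^{-1}p^mt$; so the sum over $S^D_{k+2}(K^p\Iw_{p^m};\psi)\oplus S^D_{k+2}(K^p\Iw_{p^m};\psi^{-1})$ is $(k+1)^2q^{-1}p^mt$.

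The main obstacle I anticipate is step (iii): making the adjunction computation for $U_p$ genuinely clean, i.e. tracking precisely how the anti-involution $*$ of \eqref{E:anti-convolution} interacts with both the pairing on $\Sym^k$ (which introduces the $\det^k$ factor, hence the $p^{k+1}$) and with the coset decomposition of the $U_p$-double coset, while keeping careful account of the Nebentypus unit $\langle p\rangle^{\pm1}$ (which is a $p$-adic unit and therefore invisible to valuations, but must be handled to get the correspondence of eigenvalues exactly right). One must also be slightly careful that the Atkin--Lehner normalization is compatible with Buzzard's twisted convention for the $\Iw_q$-action used throughout the paper; I would cross-check the resulting formula against the well-known classical modular forms statement (Atkin--Lehner for $S_{k+2}(\Gamma_0(N)\cap\Gamma_1(p^m),\psi)$) and against the known case computations in \cite{wan-xiao-zhang}, which should make the bookkeeping routine rather than conceptually hard.
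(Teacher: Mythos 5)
Your proposal is correct in outline, but it takes a genuinely different route from the paper. The paper proves Atkin--Lehner duality representation-theoretically: after base change to $\CC$ and applying Jacquet--Langlands and \cite[Proposition~2.8]{loeffler-weinstein}, the local component $\pi_p$ of any constituent of $S^D_{k+2}(K^p\Iw_{p^m};\psi)$ is forced to be a principal series $\pi(\unr(\alpha), \unr(\alpha^{-1})\omega_p)$ because the nebentypus conductor $p^m$ matches the Iwahori level $\Iw_{p^m}$; the $U_p$-eigenvalue on the $\Iw_{p^m}$-fixed line is read off as $\alpha p^{(k+1)/2}$, and twisting $\pi$ by a central Hecke character attached to $\psi^{-1}$ moves it into $S^D_{k+2}(K^p\Iw_{p^m};\psi^{-1})$ with $U_p$-eigenvalue $\alpha^{-1}p^{(k+1)/2}$, whence the product is $p^{k+1}$. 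Your approach instead builds a Petersson-type pairing plus the Atkin--Lehner involution $w_{p^m}$ and shows $U_p$ and $p^{k+1}(U_p')^{-1}$ are adjoint. Both are standard proofs of the same fact. Yours is more self-contained (it avoids importing Jacquet--Langlands and the Loeffler--Weinstein classification) but pays for it in the adjunction bookkeeping you flag as step (iii); the paper's avoids all explicit intertwiner computations and exposes \emph{why} the ``pairing'' is bijective at the level of eigenvalues (it is twisting of the Langlands parameter), which is arguably more robust against multiplicity issues.

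One imprecision you should fix: in step (i) you write that $\Ind_{B(\ZZ_p)}^{\Iw_q}(k,\psi)^{m,\alg}$ is identified with $\Sym^k$ of the standard representation (suitably twisted). That space has dimension $(k+1)q^{-1}p^m$, not $k+1$, so it is not $\Sym^k$. What you really want is to switch to the $\Iw_{p^m}$-equivariant picture for $S^D_{k+2}(K^p\Iw_{p^m};\psi)$ — the $(\alpha,\beta)=(m,m)$ extreme in Remark~\ref{R:Buzzard freedom} — where the coefficient module genuinely is $\Sym^k(E^2)\otimes\psi$ of dimension $k+1$, and then build the pairing there; Frobenius reciprocity identifies the resulting automorphic-form spaces. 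Once that correction is made, the rest of the adjunction argument should go through, and the ``in particular'' counting at the end is exactly right.
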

\begin{proof}
 This is well known to the experts.
We here give a proof as follows. Firstly note that the base change of $S^D_{k+2}(K^p\Iw_{p^m};\psi)$ to $\CC$ is isomorphic to the corresponding classical space of automorphic forms for $D$. Since $\psi$ has conductor $p^m$ while the level structure at $p$ is $\Iw_{p^m}$, by applying Jacquet--Langlands and \cite[Proposition 2.8]{loeffler-weinstein}, we see that for every automorphic representation $\pi$ appearing in $S^D_{k+2}(K^p\Iw_{p^m};\psi)$, its $p$-component $\pi_p$ is a principal series of $\GL_2(\QQ_p)$ whose corresponding two characters of $\QQ_p^\times$ are $\unr(\alpha)$ and $\unr(\alpha^{-1}) \otimes \omega_p$, where $\unr(?)$ is an unramified character of $\QQ_p^\times$ sending $p$ to $?$ and $\omega_p$ is the  $p$-component of the Hecke character associated to  $\psi$. 
Moreover, the $U_p$-eigenvalue on the $\Iw_{p^m}$ fixed vector of $\pi_p$ is $\alpha p^{(k+1)/2}$.
But we can twist the representation $\pi$ by a central Hecke character associated to $\psi^{-1}$; then the resulting automorphic representation would appear in $S^D_{k+2}(K^p\Iw_{p^m};\psi^{-1})$ (or rather its base change to $\CC$) instead.
Moreover, the $U_p$-eigenvalue on the $\Iw_{p^m}$ fixed vector of the $p$-component of this automorphic representation is $\alpha^{-1}p^{(k+1)/2}$.
In conclusion, one can pair the $U_p$-eigenvalues of $S^D_{k+2}(K^p\Iw_{p^m}; \psi)$ and the $U_p$-eigenvalues of $ S^D_{k+2}(K^p\Iw_{p^m}; \psi^{-1})$ so that they multiply to $p^{k+1}$.
Our assertion on slopes follows from this.
\end{proof}

\subsection{Proof of Theorem~\ref{T:main theorem}} 
\label{SS:Buzzard-Kilford D}
By virtue of Remark \ref{R:non-neat}, we only need to treat the case that the tame level is neat. 
To this end, it suffices to get the desired decomposition for each $\Spc^{>1/p}_{D,\omega}$. We consider the classical weights $\chi_k = (k, \psi)$ of conductor 
$q^2$ with $k \in \ZZ_{\geq 0}$, such that $\chi_k|_{\Delta} = \omega$. The corresponding $T$-coordinates $T_{\chi_k}$ have valuation $\frac{q}{\varphi(q^2)}=\frac{p}{q(p-1)}<1$. The space of overconvergent automorphic forms $S^{D,\dagger}_{\chi_k}$ contains the subspace $S^D_{k+2}(K^p\Iw_{q^2};\psi)$ of classical automorphic forms. Put $n_k=kqt$ for $k\in\ZZ_{\geq0}$. By (\ref{E:atkin-lehner}), we have
\[
\dim S^D_{k+2}(K^p\Iw_{q^2}; \psi) = (k+1)qt=n_{k+1}.
\]

{\bf Step I:} The first important observation is that the Newton polygon of $\sum_{n\geq0}c_n(T_{\chi_k})X^n$ touches the lower bound polygon at the points 
\[
\calP_k:=\big(n_{k+1}, \lambda(n_{k+1})v(T_{\chi_k})\big).
\] 
On the one hand, Proposition~\ref{P:atkin-lehner} says that the sum of  all $U_p$-slopes on $S^D_{k+2}(K^p\Iw_{q^2}; \psi) \oplus S^D_{k+2}(K^p\Iw_{q^2}; \psi^{-1})$ is $(k+1)^2qt$. 
By Proposition~\ref{P:classicality}, one deduces that the set of all $U_p$-slopes on $S^D_{k+2}(K^p\Iw_{q^2}; \psi) \oplus S^D_{k+2}(K^p\Iw_{q^2}; \psi^{-1})$ is  exactly the set of the first $n_{k+1}$ $U_p$-slopes in each of $S^{D, \dagger}_{(k, \psi)}$ and $S^{D, \dagger}_{(k, \psi^{-1})}$. It follows that the sum of the first $n_{k+1}$ $U_p$-slopes in each of $S^{D, \dagger}_{(k, \psi)}$ and $S^{D, \dagger}_{(k, \psi^{-1})}$ is also $(k+1)^2qt$.
  
On the other hand, for each of  $T_{(k,\chi)}$ and $T_{(k,\chi^{-1})}$, when the $x$-coordinate is $(k+1)qt$, the $y$-coordinate of the lower bound polygon is
\begin{equation}
\label{E:lambda(n_k)}
\begin{split}
\frac p{q(p-1)}\lambda\big((k+1)qt\big) = &\frac p{q(p-1)} \sum_{n=0}^{(k+1)qt-1} \Big(\Big\lfloor \frac nt \Big\rfloor - \Big\lfloor \frac n{pt} \Big\rfloor \Big) 
\\=\ &
\frac p{q(p-1)} \bigg( t\sum_{n=0}^{(k+1)q-1} n - pt \sum_{n=0}^{\frac{(k+1)q}{p}-1} n  \bigg) \\=\ & \frac{(k+1)^2qt}2.
\end{split}
\end{equation}
This exactly agrees (!) with half of the sum of the first $n_{k+1}$ $U_p$-slopes on each of 
$S^{D, \dagger}_{(k, \psi)}$ and $S^{D, \dagger}_{(k, \psi^{-1})}$.
In particular, we see that the sum of first $n_{k+1}$ $U_p$-slopes on $S^{D, \dagger}_{(k, \psi)}$ (resp. $S^{D, \dagger}_{(k, \psi^{-1})}$) is $(k+1)^2qt/2$. That is, the Newton polygon of $\sum_{n\geq0}c_n(T_{\chi_k})X^n$ passes through the point 
\[
\big((k+1)qt, (k+1)^2qt/2\big)=\big(n_{k+1}, \lambda(n_{k+1})v(T_{\chi_k})\big).
\]
We admit that the matching of two sums of $U_p$-slopes is quite lucky in our case, which we do not know how to reproduce in too much more generality.
From now on, we may work only with $S^{D, \dagger}_{(k, \psi)}$. (For each $k$, we fix one $\psi$ of conductor $q^2$ that satisfy $\omega_0^k\psi|_\Delta = \omega$.)

{\bf Step II:} We deduce the decomposition of $\Spc_D^{>1/p}$ from the touching of polygons.

To proceed, first note that $\lfloor \frac{n_k}t\rfloor - \lfloor \frac{n_k}{pt}\rfloor = kq-kqp^{-1} = k\varphi(q)$. So if $i\in\ZZ$ and $n_{k+1}-i\geq0$, then 
\[
\lambda(n_{k+1}-i)\geq\lambda(n_{k+1})-(k+1)\varphi(q)i
\]
with equality if and only if $i\in[-t,t]$. 
Since $|T_{\chi_k}| = p^{-p/q(p-1)} \in (1/p, 1)$, by Corollary \ref{C:lower bound}, we have 
\begin{equation}
\label{E:must be unit}
v(c_n(T_{\chi_k})) = \lambda (n)v(T_{\chi_k}) \textrm{  if and only if } b_{n, \lambda(n)} \textrm{ is a $p$-adic unit in }\ZZ_p.
\end{equation}
We have previously shown that the Newton polygon of $\sum_{n \geq 0} c_n(T_{\chi_k})X^n$ exactly goes through the point 
$\calP_k=\big(n_{k+1}, \lambda(n_{k+1})v(T_{\chi_k})\big)$.
Note that this does not force $b_{n_{k+1}, \lambda(n_{k+1})}$ to be a $p$-adic unit, as the point $\calP_k$ may not be a vertex for the Newton polygon.
We thus suppose that the line segment of the Newton polygon of $\sum_{n \geq 0} c_n(T_{\chi_k}) X^n$ passing through $\calP_k$ lies over $[n_{k+1}^-, n_{k+1}^+]$ in its $x$-coordinate. It is clear that if $n_{k+1}^-\neq n_{k+1}^+$, then this line segment has slope $(k+1)\varphi(q)$. 
It follows that 
\[
n_{k+1}^- \in [n_{k+1}-t, n_{k+1}] \quad \textrm{and} \quad n_{k+1}^+ \in [n_{k+1}, n_{k+1}+t].
\]
Moreover, the equivalence \eqref{E:must be unit} implies that  $n_{k+1}^-$ (resp. $n_{k+1}^+$) is the minimal index in $[n_{k+1}-t, n_{k+1}] $ (resp. maximal index  in $[n_{k+1}, n_{k+1} + t]$) such that
\[
b_{n_{k+1}^-, \lambda(n_{k+1}^-)} \textrm{ (resp. }b_{n_{k+1}^+, \lambda(n_{k+1}^+)} \textrm{) is a $p$-adic unit in }\ZZ_p.
\]
For a uniform treatment later, we set $n_0^- =0$ and $n_0^+$ the maximal index in $[0, t]$ such that $b_{n_0^+, 0}$ is a $p$-adic unit.

Now, if we specialize to any point $T\in \calW_\omega^{>1/p}$, we must have (for all $i\in \ZZ_{\geq0}$)
$$v(c_{n_k-i}(T)) \geq v(T) \lambda(n_k-i) \geq  v(T) \cdot \big( \lambda(n_k) - k\varphi(q)i \big),$$
where the first inequality is a strict inequality if $n_k-t\leq n_k - i < n_k^- $ (by the minimality of $n_k^-$) and the second inequality is a strict inequality if $n_k-i<n_k-t$. 
In summary,  for $i\in \ZZ_{\geq0}$, we have the inequality 
$$v(c_{n_k-i}(T)) \geq v(T) \cdot \big( \lambda(n_k) - k\varphi(q)i \big),$$
which becomes a strict inequality if $n_k-i< n_k^{-}$ and becomes an equality if $n_k-i=n_k^-$. Similarly, we have the inequality 
\begin{align*}
v(c_{n_k+i}(T))\geq v(T) \cdot \big( \lambda(n_k) + k\varphi(q)i \big), 
\end{align*}
which becomes a strict inequality if $n_k+i>n_k^{+}$ and becomes an equality if $n_k+i=n_k^+$. 
Moreover, by Corollary \ref{C:lower bound}, we see that the differences in all strict inequalities are at least $\min \{v(T), 1-v(T)\}$.

In summary, we conclude that for every $T\in\mathbb{C}_p$ with $0<v(T)<1$, if $n_k^-\neq n_k^+$, then the points  
\[
\big(n_{k}^-,\lambda(n_{k}^-)v(T)\big)\text{ and }\big(n_{k}^+,\lambda(n_{k}^+)v(T)\big)
\]
are two consecutive vertices of the Newton polygon of $\sum_{n\geq0}c_n(T)X^n$. Furthermore, the line segment connecting these two vertices has slope $k\varphi(q)v(T)$, and passes through the point $\big(n_{k},\lambda(n_{k})v(T)\big)$. Otherwise, $n_k^-=n_k=n_k^+$ is a vertex of the Newton polygon of $\sum_{n\geq0}c_n(T)X^n$.

The decomposition of the spectral curve follows from this.
More precisely, for $I = k = [k,k]$ or $(k, k+1)$ with $k \in \ZZ_{\geq 0}$, we define $X_{I,\omega}$ to be the open subspace of $\Spc_{D,\omega}^{>1/p}$
such that for each point $z \in X_{I,\omega}$, we have
\[
v(a_p(z)) \in \varphi(q) v(T_{\wt(z)}) \cdot I.
\]
By our previous estimates and applying \cite[Corollary 4.3]{buzzard}, these are in fact finite flat over $\calW^{>1/p}_{\omega}$, and are affinoid subdomains when restricted to fibers of any affinoid subdomain of $\calW^{>1/p}_{\omega}$.
It follows that these $X_{I,\omega}$'s are unions of connected components of $\Spc_{D}^{>1/p}$.
Regarding the degrees, we must have
\begin{equation}\label{E:inequality1}
\sum_{j=0}^{k-1}\big(\deg X_{j,\omega} + \deg X_{(j, j+1),\omega} \big) = n_k^- \in [n_k -t, n_k] \textrm{ and}
\end{equation}
\begin{equation}\label{E:inequality2}
\sum_{j=0}^{k-1}\big(\deg X_{j,\omega} + \deg X_{(j, j+1),\omega} \big) + \deg X_{k,\omega}
= n_k^+ \in [n_k, n_k+t].
\end{equation}

{\bf Step III:} It remains to compute the degrees of $X_{I,\omega}$'s. It is clear that $X_{0, \omega}$ coincides with the restriction of $X^\ord_{\omega}$, which is introduced in the proof of Theorem \ref{T:hida},  to $\calW_\omega^{>1/p}$. Then by Corollary~\ref{C:hida}, $\deg X_{0,\omega}$ is equal to the dimension of slope zero subspace in $S^{D, \dagger}_{\omega}$. That is,
\[
\deg X_{0,\omega}=n_0^+=r_{\ord}(\omega).
\]
(One subtlety of the argument here is that: we can not directly apply the previous part of Theorem \ref{T:main theorem} because the weight $\omega\notin \calW^{>1/p}$. We have to employ Corollary~\ref{C:hida} instead.)
For $k\geq0$, first note that $n_{k+1}- n_{k+1}^-$ is equal to the dimension of slope $k+1$ subspace in $S^{D}_{k+2}(K^p\Iw_{q^2}; \psi)$.
By Atkin--Lehner theory (Proposition \ref{P:atkin-lehner}) and Proposition \ref{P:classicality}, the multiplicity is the same as the dimension of slope zero subspace in $S^{D, \dagger}_{(k,\psi^{-1})}$. Using Corollary~\ref{C:hida} again, we deduce that 
\[
n_{k+1}-n_{k+1}^-=r_{\ord}\big(\psi^{-1}|_{\Delta}\cdot\omega_0^k\big)=r_{\ord}\big(\omega^{-1}\omega_0^{2k}\big)
\] 
because $\psi^{-1}|_\Delta \cdot \omega_0^k=\chi_k^{-1}|_{\Delta}\cdot\omega_0^{2k}=\omega^{-1}\omega^{2k}_0$. 

To compute $n_{k+1}^+ - n_{k+1}$, we recall the following exact sequence (cf. \cite{jone})
\[
0 \to S^D_{k+2}(K^p\Iw_{q^2}, \psi) \to S^{D, \dagger}_{(k,\psi)} \xrightarrow{\left(\frac{d}{dz}\right)^{k+1}} S^{D, \dagger}_{(-k-2, \psi)} \to 0.
\]
This exact sequence is equivariant for the $U_p$-action on the first two spaces, and the $p^{k+1}U_p$-action on the third space.
It is clear that $n_{k+1}^+-n_{k+1}$ is equal to the codimension of $S^D_{k+2}(K^p\Iw_{q^2}, \psi)$ in the slope $\leq k+1$ subspace in $S^{D, \dagger}_{(k,\psi)}$. The latter in turn is equal to the dimension of slope zero subspace of $S^{D, \dagger}_{(-k-2, \psi)}$ by the exact sequence.
Using Corollary~\ref{C:hida}, we thus obtain
\[
n_{k+1}^+ - n_{k+1} = r_\ord\big(\psi|_\Delta \cdot \omega_0^{-k-2}\big)=r_{\ord}\big(\omega\omega_0^{-2k-2}\big).
\]

The final degree is computed by
\[
\deg X_{k,\omega} = n_k^+ - n_k^-=(n_k^+-n_k)+(n_k-n_k^{-})=\left\{
\begin{array}{l}
r_{\ord}(\omega), \hspace{39.5mm}\textrm{if}\hspace{2mm}k=0,\\
r_{\ord}(\omega^{-1}\omega_0^{2k-2})+r_{\ord}(\omega\omega_0^{-2k}), \textrm{if}\hspace{2mm}k\geq1,\\
\end{array}
\right.
\]
and
\begin{align*}
\deg X_{(k,k+1),\omega} = n_{k+1}^- - n_k^+&=n_{k+1}-n_k-(n_{k+1}-n_{k+1}^{-})-(n_k^+-n_k)\\
&=qt-r_{\ord}(\omega^{-1}\omega_0^{2k})-r_{\ord}(\omega\omega_0^{-2k}).
\end{align*}
This concludes the proof of Theorem~\ref{T:main theorem}.
\hfill $\Box$

The following interesting consequence of Theorem \ref{T:main theorem} is pointed out to us by Chenevier. We are grateful to him for allowing us to include it in this paper. 

 \begin{proposition}
\label{P:chenevier} 
Let C be an irreducible component of $\Spc_D$.
If C is finite over weight space, then C is inside the ordinary locus.
\end{proposition}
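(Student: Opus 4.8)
The plan is to produce from $C$ a single nowhere‑vanishing rigid‑analytic function on the relevant weight disk whose $p$‑adic valuation is simultaneously \emph{constant} and, near the boundary, arbitrarily small; this will force it to be a $p$‑adic unit and hence push $C$ into the ordinary locus. First I would reduce to one disk: since each $\Spc_{D,\omega}=\wt^{-1}(\calW_\omega)$ is open and closed in $\Spc_D$ and $C$ is connected, $C$ lies in a single $\Spc_{D,\omega}$; fix this $\omega$. By hypothesis $\wt\colon C\to\calW_\omega$ is finite, and since $C$ is a one‑dimensional reduced irreducible component it dominates $\calW_\omega$; as the ring of functions on each affinoid closed subdisk of $\calW_\omega$ is a principal ideal domain, the pushforward of $\calO_C$ is a finite torsion‑free, hence free, module over it, so $\wt\colon C\to\calW_\omega$ is finite flat of some constant degree $d\geq 1$.

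Next I would form $g_0:=\Nm_{C/\calW_\omega}(a_p)\in\calO(\calW_\omega)$, the determinant of multiplication by $a_p$ on the rank‑$d$ locally free sheaf $\wt_*\calO_C$ (equivalently, up to sign, the constant term of its characteristic polynomial). Since $a_p$ takes values in $\GG_m^\rig$ it is invertible on $C$, so $g_0\in\calO(\calW_\omega)^\times$, and at any $\chi\in\calW_\omega$ one has $v(g_0(\chi))=\sum_{x\in\wt^{-1}(\chi)}\ell_x\,v(a_p(x))$, the sum over the fibre taken with multiplicities $\ell_x$. Because the characteristic power series $1+c_1X+c_2X^2+\cdots$ of $U_p$ has all $c_n\in\Lambda$ (Theorem~\ref{T:HP lower bound}), its Newton polygon over any point of $\calW$ has nonnegative slopes, so $v(a_p)\geq 0$ everywhere on $\Spc_D$; hence $v(g_0(\chi))\geq 0$ for all $\chi$, i.e. $g_0$ is power‑bounded on the open unit disk, so $g_0\in\ZZ_p\llbracket T\rrbracket$. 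Being also nowhere zero, Weierstrass preparation forces $g_0=p^{\mu}w$ with $\mu\in\ZZ_{\geq 0}$ and $w\in\ZZ_p\llbracket T\rrbracket^\times$, so that $v(g_0(\chi))=\mu$ is \emph{constant} on $\calW_\omega$.

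Finally I would pin down $\mu=0$ using Theorem~\ref{T:main theorem}. Since $\wt\colon C\to\calW_\omega$ is finite, so is $\wt\colon C^{>1/p}\to\calW_\omega^{>1/p}$; decomposing $C^{>1/p}$ into its intersections with the pieces $X_0,X_{(0,1)},X_1,\dots$, only finitely many are nonempty, and on each of them $v(a_p(x))\leq M_0\,v(T_{\wt(x)})$ for a fixed constant $M_0$ (namely $\varphi(q)$ times the largest index occurring). Hence $v(g_0(\chi))=\sum_x\ell_x v(a_p(x))\leq dM_0\,v(T_\chi)\to 0$ as $|T_\chi|\to 1^-$, which forces the constant $\mu$ to be $0$. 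Therefore $v(g_0(\chi))=0$ for every $\chi$, and since this is a sum of nonnegative terms, $v(a_p(x))=0$ for every $x\in C$; that is, $C\subseteq X^\ord$.

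I expect the conceptual content to be light and the work to lie in the rigid‑analytic bookkeeping: upgrading ``finite over $\calW$'' to ``finite flat'' using reducedness of $C$ and the PID property of one‑variable affinoid algebras, checking that the norm/characteristic‑polynomial formalism over the non‑affinoid disk $\calW_\omega$ computes $\sum\ell_x v(a_p(x))$ fibrewise, and recalling that the power‑bounded functions on the open unit disk are exactly $\ZZ_p\llbracket T\rrbracket$ so that Weierstrass preparation applies. The only input from Theorem~\ref{T:main theorem} that is genuinely used is that near the boundary the slope is bounded by a fixed multiple of $v(T)$, so it tends to $0$.
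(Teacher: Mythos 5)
Your argument follows the paper's proof essentially step for step: take the norm of $a_p$ along the finite flat weight map down to a single weight disk, observe it is nowhere vanishing and power-bounded hence of the form $p^\mu$ times a unit of $\ZZ_p\llbracket T\rrbracket$, and invoke Theorem~\ref{T:main theorem} (slopes tending to $0$ at the boundary) to force $\mu=0$ and hence $v(a_p)\equiv 0$ on $C$. You spell out a few points the paper leaves implicit---finite flatness via reducedness and the PID property of affinoid subdisks rather than citing Coleman--Mazur's Theorem C, and $v(a_p)\ge 0$ from the integrality in Theorem~\ref{T:HP lower bound}---but these are elaborations of the same route, not a different one.
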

\begin{proof}
Under the assumption, the weight map $C\rightarrow\calW$ is finite and flat (the flatness is ensured by \cite[Theorem C]{coleman-mazur}).
Thus the analytic function $a_p$ on $C$, which is nowhere vanishing and bounded by 1, has a norm $g$ down to some weight disk $\calW_\omega$.
It is clear that the analytic function $g$ is also a nowhere vanishing and bounded by 1, so it has the form $p^n h$ where $h$ is a unit in $\ZZ_p\llbracket T\rrbracket$.
In particular, this shows that for all $w\in\calW_\omega$,  
\[
\sum_{x \in C, \wt(x)=w} v(a_p(x)) = n. 
\]
But Theorem \ref{T:main theorem} says that $v(a_p(x))$ goes to $0$ above the boundary of $\calW_\omega$, so $n=0$. Thus $v(a_p(x))=0$ for all $x\in C$, concluding the proposition.
\end{proof}

\begin{remark}
\label{R:integral model of spectral curve}
We note that the existence of $n_k^\pm$ in the proof of Theorem~\ref{T:main theorem} in fact implies that, for $n = n_k^\pm$, 
$c_n(T)$ is equal to $T^{\lambda_n}$ times a unit in $\Lambda^{>1/p}$.
Then, a standard factorization argument shows (see e.g. \cite[Proposition~3.2.2]{kedlaya} for the argument) that we can factor $\Char(P)$ into the following product
\[
P_0(X) \cdot P_{(0,1)}(X)\cdot P_1 (X)\cdot P_{(1,2)}(X) \cdots
\]
such that each $P_I(X) \in \Lambda^{>1/p}\llbracket X\rrbracket$ is the characteristic polynomial corresponding to the component $X_I$.
This gives an integral model $\gothX_I$ of each $X_I$ (in the case that the tame level is neat).

An intriguing and pressing future question is: what is the arithmetic property at the ``special fibers" of $\gothX_n$ and $\gothX_{(n,n+1)}$?
In particular, the pseudo-representation of $\Gal(\overline \QQ/\QQ)$ can be extended to the integral model; what can we say about the representation over the special fiber of these formal schemes?  We hope to come back to this question in a future work.  At the same time, we encourage the readers to explore more applications of Coleman's idea on studying the integral model of the eigencurve.

An alternative way to understand the integral model of the eigencurve is to ``compactify" the weight space in the category of adic spaces, by viewing the boundary part $\calW^{>1/p}$ in the $T$-adic world and adding a point at the ``boundary" on each disk (whose residue field is $\FF_p((T))$). Then one can extend the spectral curve over these ``boundary points."
We refer to \cite{AIP} and the recent preprint \cite{johansson-newton} for more discussion on this viewpoint.
\end{remark}

\begin{remark}
\label{R:generalization}
We discuss some potential generalization of our main theorem.
\begin{enumerate}
\quash{\item
We excluded the case of $p=2$ merely to simplify the presentation. Our argument should work for $p=2$ with little modification.  Another reason for excluding this case is because we hold faith that our argument should hold for quite general algebraic groups $G$, in which situation, we would be able to treat all prime numbers uniformly.}

\item
As pointed out in Remark~\ref{R:relation to others work}(1), our result cannot access the eigencurve with trivial tame level structure.
There are two possible strategies to remedy this.
One is to work with modular symbols; it might to be possible to replicate the argument in this paper under that setup.  We encourage interested readers to explore this possibility.
Another approach is to base change (in a $p$-adic family) to a real quadratic field $F$ in which $p$ splits.
Then the eigensurface for the unramified definite quaternion algebra $D$ over $F$ should be the same as the eigensurface for overconvergent Hilbert modular forms over $F$.
Then the analogous results for the unramified eigencurve should follow from that of the eigensurface for $D$ (if the latter case may be proved).

\item
Now, take a general algebraic group $G$ over $\QQ$ which is quasi-split at $p$ and whose $\RR$-points are compact modulo center.
Then one can construct the associated eigenvariety $\calC_G$ (as carried out in \cite{loeffler}).
Our ultimate optimistic expectation is that an analogue of Proposition~\ref{P:combined estimate} still holds true.\footnote{This was recently verified by Johansson and Newton \cite{johansson-newton}.} In particular, we can still see the characteristic power series of the $U_p$-operators on the space of integral automorphic forms (with respect to an appropriate basis).
Nonetheless, the classicality argument and the touching of Newton polygon are no longer available, at least not in a  naive way.
We strongly encourage interested readers to investigate in this issue.
\end{enumerate}
\end{remark}

\section{Distribution of $U_p$-slopes}
\label{Sec:distribution of Up}
This section is devoted to proving Theorem \ref{T:main theorem 2}. 
Our argument for the first half of the theorem is modeled on the proof of \cite[Theorem 3.8]{davis-wan-xiao}, whose upshot is to give a suitable upper bound for the Newton polygon. The rest of the theorem, i.e. the arithmetic progression statement about the ratios, then follows easily from Atkin--Lehner theory.  
 
To start with, we first point out an upper bound of the Newton polygon of $\sum_{n=0}^{\infty}c_n(T)X^n$ when $0<v(T)<1$. Indeed, in the course of the proof of Theorem \ref{T:main theorem}, we already show that the Newton polygon of $\sum_{n\geq0}c_n(T)X^n$ passes through the points
$(n_k, \lambda(n_k)v(T))$ for all $k\geq0$. Therefore, we deduce that the Newton polygon of $\sum_{n\geq0}c_n(T)X^n$ always lies below the polygon with vertices 
$(n_k, \lambda(n_k)v(T))$ for all $k\geq 0$. We call this polygon the \emph{upper bound polygon} of $\sum_{n=0}^\infty c_n(T)X^n$. 

\begin{lemma}
\label{L:vertical difference}
The maximal vertical difference between the lower bound polygon and the upper bound polygon is $(p^2-1)tv(T)/8$ for $p>2$, and $tv(T)$ for $p=2$.
\end{lemma}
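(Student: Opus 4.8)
The plan is to reduce the lemma to an elementary finite optimization. Both polygons scale by the positive constant $v(T)$, so I would first divide through by $v(T)$ and work with the lower bound polygon $\Gamma_{\mathrm{low}}$ with vertices $(n,\lambda(n))$, $n\geq 0$, and the upper bound polygon $\Gamma_{\mathrm{up}}$ with vertices $(n_k,\lambda(n_k))$, $n_k=kqt$. Since $\lambda$ is convex (its slopes $\lfloor i/t\rfloor-\lfloor i/(pt)\rfloor$ are non-decreasing), $\Gamma_{\mathrm{up}}$ lies on or above $\Gamma_{\mathrm{low}}$, and on each interval $[n_k,n_{k+1}]$ it is the single chord $\mathrm{chord}_k$ joining $(n_k,\lambda(n_k))$ to $(n_{k+1},\lambda(n_{k+1}))$. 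Thus I must compute $\max_{k\geq 0}\ \max_{x\in[n_k,n_{k+1}]}\bigl(\mathrm{chord}_k(x)-\lambda(x)\bigr)$, where $\lambda$ is extended to $[n_k,n_{k+1}]$ by the piecewise-linear graph of $\Gamma_{\mathrm{low}}$.

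The combinatorial input is that the slope $s(i):=\lambda(i+1)-\lambda(i)=\lfloor i/t\rfloor-\lfloor i/(pt)\rfloor$ is constant on each block $i\in[mt,(m+1)t)$, with value $m-\lfloor m/p\rfloor$; this is a short check using $t\geq 1$ (the point being $\lfloor i/(pt)\rfloor=\lfloor m/p\rfloor$ throughout such a block). Hence on $[n_k,n_{k+1}]$ the graph of $\lambda$ is a convex polygon whose successive slopes over blocks of width $t$ are $m-\lfloor m/p\rfloor$ for $m=kq,\dots,(k+1)q-1$: for $p>2$ (so $q=p$) these are $k(p-1),k(p-1)+1,\dots,(k+1)(p-1)$, and for $p=2$ (so $q=4$) they are $2k,2k+1,2k+1,2k+2$. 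Subtracting the affine function agreeing with $\lambda$ at $n_k$ and having the smallest of these slopes changes neither the difference $\mathrm{chord}_k-\lambda$ nor the interval, and makes the picture independent of $k$: for $p$ odd the shifted slopes are $0,1,\dots,p-1$, so the graph of $\lambda$ becomes the polygon with vertex height $t\cdot\tfrac{j(j-1)}{2}$ at $x=n_k+jt$ and $\mathrm{chord}_k$ becomes the line of slope $\tfrac{p-1}{2}$ through $(n_k,0)$; for $p=2$ the shifted slopes are $0,1,1,2$, the vertex heights are $0,0,t,2t,4t$, and $\mathrm{chord}_k$ has slope $1$.

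A direct evaluation at the block endpoints $x=n_k+jt$ then gives $\mathrm{chord}_k(x)-\lambda(x)=\tfrac{jt}{2}(p-j)$ for $p$ odd ($0\leq j\leq p$) and $0,t,t,t,0$ for $j=0,\dots,4$ when $p=2$. Since $\mathrm{chord}_k-\lambda$ is piecewise linear on $[n_k,n_{k+1}]$, its maximum there is attained at one of these endpoints, so optimizing over $j$ gives $\max_{0\leq j\leq p}\tfrac{t}{2}j(p-j)=\tfrac{t}{2}\cdot\tfrac{p^2-1}{4}=\tfrac{(p^2-1)t}{8}$ for $p$ odd (the maximizing integer being $(p\pm 1)/2$), and $t$ for $p=2$. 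Reinstating the factor $v(T)$ yields the asserted values $(p^2-1)tv(T)/8$ and $tv(T)$. There is no substantial obstacle; the steps requiring the most care are the floor-function bookkeeping for $s(i)$ and the separate treatment of $p=2$, where $q=4\neq p$ forces the slightly different block pattern $2k,2k+1,2k+1,2k+2$.
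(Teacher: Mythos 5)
Your proof is correct and follows essentially the same route as the paper's: both restrict to a single interval $[n_k,n_{k+1}]$, use the fact that the lower bound polygon has constant slope $m-\lfloor m/p\rfloor$ on each block of width $t$, and observe that the maximal gap to the chord is attained at the middle block, yielding $\tfrac{t}{2}\cdot\tfrac{p-1}{2}\cdot\tfrac{p+1}{2}=\tfrac{(p^2-1)t}{8}$ (times $v(T)$) for $p$ odd and $t\,v(T)$ for $p=2$. Your normalization-by-affine-shift step is a cosmetic rearrangement of the paper's direct summation of slope increments; the content is identical, though you spell out the $p=2$ case which the paper only asserts is "similar".
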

\begin{proof}
We only treat the case $p>2$, the case $p=2$ being similar. Note that the lower bound polygon and the upper bound polygon touch at the vertices $(n_k,\lambda(n_k)v(T))$ for $k\geq0$. It is sufficient to bound their vertical difference over $x\in[n_k, n_{k+1}]$. By (\ref{E:lambda(n_k)}), we first get 
$\lambda(n_{k+1})=(k+1)^2p(p-1)t/2$. 
A short computation then shows that the restriction of the upper bound polygon on $[n_k, n_{k+1}]$ is a linear function with slope $(k+\frac 1{2})(p-1)v(T)$. On the other hand, for every integer  $a\in[0,p-1]$,  by Theorem \ref{T:HP lower bound}, we know that  the restriction of the lower bound polygon on
$[n_k+at, n_k+(a+1)t]$ is a linear function with slope $(k(p-1)+a)v(T)$. We therefore deduce that the maximal vertical difference over $[n_k, n_{k+1}]$ is achieved when $a=\frac{p-1}{2}$. 
 
 In that case, put $n=n_k+(p-1)t/2$. It is straightforward to see the vertical difference at $x=n$ is, by looking at the incremental differences of slopes built from the vertex $(n_k, \lambda(n_k) v(T))$, 
\begin{align*}
&\sum_{i = n_k}^{n-1}  \bigg( \Big(k+\frac 12\Big)(p-1) - \Big( \Big\lfloor\frac i{t}\Big\rfloor-\Big\lfloor\frac i{pt}\Big\rfloor \Big) \bigg) v(T)
\\
= \ & t v(T) \sum_{j = kp}^{kp + \frac{p-1}{2}-1}  \bigg( \Big(k+\frac 12\Big)(p-1) - \Big( j -\Big\lfloor\frac jp\Big\rfloor \Big) \bigg)
\\
=\ &tv(T)  \sum_{j =0}^{ \frac{p-1}{2}-1}  \Big( \frac {p-1}{2} - j\Big) = \frac 18 (p^2-1) tv(T).\qedhere
\end{align*}
\end{proof}

\subsection{Proof of Theorem~\ref{T:main theorem 2}}
\label{SS:proof of main theorem 2} 
We first show the existence of $\lambda$, the sequence $\alpha_0(\omega),\alpha_1(\omega),\dots$ and the desired decomposition for $\Spc_{D,\omega}^{>\lambda}$. For this purpose, by virtue of Remark \ref{R:non-neat}, it is sufficient to treat the case that the tame level is neat. \quash{Then it suffices to prove the theorem for $\Spc_{D,\omega}^{>\lambda}$ with each $\omega$.} Also, we assume $p>2$, the case $p=2$ being similar. We will proceed as in the proof of Theorem \ref{T:main theorem}. That is, it suffices to show that for $T\in\mathbb{C}_p$ with $0<v(T)<\frac{8}{(p^2-1)t+8}$, the ratios to $v(T)$ of the slopes (counted with multiplicity) of the Newton polygon of $\sum_{n\geq0}c_n(T)X^n$ are independent of the choice of $T$. 

Recall that the Newton polygon of $\sum_{n\geq0}c_n(T)X^n$ is the convex hull of points
 $(n, v(c_n(T)))$ for all $n\geq0$. We consider those points which lie below the upper bound polygon.

{\bf Claim:} If $(l, v(c_l(T_0))$ lies strictly below the upper bound polygon for some $l\in\mathbb{N}$ and $T_0\in\mathbb{C}_p$ with $0<v(T_0)<\frac{8}{(p^2-1)t+8}$, then there exists a \emph{unique} integer $m(l)\geq\lambda(l)$ such that for every $T\in\mathbb{C}_p$ with $0<v(T)<\frac{8}{(p^2-1)t+8}$, $(l, v(c_l(T)))$ lies strictly below the upper bound polygon and $v(c_l(T))=m(l)v(T)$. 
 
Granting the claim, we conclude that there exists a (finite or infinite) set of positive integers $\{l_i\}_{i\in I}$ such that if  $0<v(T)<\frac{8}{(p^2-1)t+8}$, then the Newton polygon of  $\sum_{n\geq0}c_n(T)X^n$ is the convex hull of points 
\[
\big\{(n_k,\lambda(n_k)v(T))\big\}_{k\geq 0}\coprod \big\{(l_i, m(l_i)v(T)\big\}_{i\in I}.
\] 
It is then clear that the ratios to $v(T)$ of the slopes of this polygon are independent of $T$. This yields the existence of the sequence $\alpha_0(\omega),\alpha_1(\omega),\dots$ and the desired decomposition for $\Spc_{D, \omega}^{>\lambda}$. 

We now proceed to show the claim. First note that if $0<v(T)<\frac{8}{(p^2-1)t+8}$ and $m<\lambda(l)$, then by (\ref{E:lower bound}), we get
\begin{equation}
\label{E:m less than lambda(l)}
v(b_{l,m}T^m)\geq \lambda(l)-m+mv(T)\geq \lambda(l)v(T)-v(T)+1>\lambda(l)v(T)+\frac{(p^2-1)tv(T)}{8}.
\end{equation}
On the other hand, since $(l, v(c_l(T_0)))$ lies strictly below the upper bound polygon, by Lemma \ref{L:vertical difference}, we get 
\[
v(c_l(T_0))-\lambda(l)v(T_0)<\frac{(p^2-1)tv(T_0)}{8}.
\] 
Hence for $m<\lambda(l)$, we obtain
\begin{equation}
\label{E:m<lambda(l)}
v(c_l(T_0))<\lambda(l)v(T_0)+\frac{(p^2-1)tv(T_0)}{8}<v(b_{l,m}T_0^m)
\end{equation}
by specializing (\ref{E:m less than lambda(l)}) to $T=T_0$.
Therefore, there must be some $m\geq\lambda(l)$ such that 
$v(b_{l,m}T_0^m)\leq v(c_l(T_0))$. Let $m(l)$ be the minimal one satisfying this property. 
It follows that
\[
 v(b_{l,m(l)})\leq v(b_{l,m(l)}T_0^{m(l)})-\lambda(l)v(T_0)\leq v(c_l(T_0))-\lambda(l)v(T_0)<\frac{(p^2-1)tv(T_0)}{8}<1, 
\]
yielding $b_{l,m(l)}\in\mathbb{Z}_p^\times$. Thus for $m>m(l)$, we get
\begin{equation}
\label{E:m bigger than lambda(l)}
v(b_{l,m}T^m)>v(b_{l,m})+m(l)v(T)\geq m(l)v(T)=v(b_{l,m(l)}T^{m(l)}).
\end{equation}
Moreover, by the minimality of $m(l)$, for $m\in [\lambda(l), m(l)-1]$, we have 
\begin{equation}
\label{E:lambda(l)<m<m(l)}
v(b_{l,m}T_0^{m})>v(c_l(T_0))\geq v(b_{l,m(l)}T_0^{m(l)}),
\end{equation}
yielding $v(b_{l,m})>v(b_{l,m(l)})$. Hence $b_{l,m}\in p\mathbb{Z}_p$ for those $m$. Finally, putting (\ref{E:m<lambda(l)}), (\ref{E:m bigger than lambda(l)}), and (\ref{E:lambda(l)<m<m(l)}) together, we conclude    $v(c_l(T_0))=v(b_{l,m(l)}T_0^{m(l)})=m(l)v(T_0)$.

Now let $0<v(T)<\frac{8}{(p^2-1)t+8}$. Since the point $(l, m(l)v(T_0))$ lies strictly below the upper bound polygon for $T_0$, by similarity, the point $(l, m(l)v(T))$ lies strictly below the upper bound polygon for $T$ as well. Note that \eqref{E:m less than lambda(l)} together with Lemma \ref{L:vertical difference} imply that for $m<\lambda(l)$, the point $(l, v(b_{l,m}T^m))$ lies above the upper bound polygon. Hence $v(b_{l,m}T^m)>m(l)v(T)$ for $m<\lambda(l)$. For $m\in [\lambda(l), m(l)-1]$, since $b_{l,m}\in p\ZZ_p$, it follows that 
\[
v(b_{l,m}T^{m})\geq 1+\lambda(l)v(T)>\frac{(p^2-1)tv(T)}{8}+\lambda(l)v(T).
\] 
Hence $(l, v(b_{l,m}T^{m}))$ lies above the upper bound polygon by Lemma \ref{L:vertical difference}, yielding that $v(b_{l,m}T^m)>m(l)v(T)$. For $m>m(l)$, we have $v(b_{l,m}T^m)>m(l)v(T)$ by (\ref{E:m bigger than lambda(l)}).  We thus conclude that $v(c_l(T))=v(b_{l,m(l)}T^{m(l)})=m(l)v(T)$. This proves the claim.
\\

Now let $\Spc_{D,\omega}^{>\lambda}=\coprod_{i \geq 0} Y_{i,\omega}$ be the desired decomposition, and let 
\[
\tilde \alpha_0(\omega), \tilde\alpha_1(\omega), \dots
\] 
denote the sequence consisting of $\alpha_i$'s with multiplicity $\deg Y_{i,\omega}$. In the following, we will show that the sequence $\tilde \alpha_0(\omega), \tilde\alpha_1(\omega), \dots$ is a disjoint union of $p^{M-1}(p-1)t/2$ arithmetic progressions with the same common difference $\frac{\varphi(q)p^M}{2q^2}$.

Let $\psi$ be a character of conductor $p^M$.  We look at weights of the form $(k,\psi)$ for all $k\geq0$. First, note that  $v(T_{(k, \psi)})=\frac{q}{\varphi(p^M)}=\frac{q}{(p-1)p^{M-1}}$ by the assumption on $M$; thus $(k,\psi)\in\calW^{>\lambda}_{\psi|_{\Delta}\cdot\omega_0^k}$.
Noting the equality $\frac{q}{(p-1)p^{M-1}} \varphi(q) = q^2p^{-M}$, it then follows that the $U_p$-slopes of $S_{k+2}^D(K^p\Iw_{p^M}, \psi)$ are
\[
q^2p^{-M}\tilde \alpha_0(\psi|_{\Delta}\cdot\omega_0^k),\dots, q^2p^{-M}\tilde \alpha_{(k+1)q^{-1}p^{M}t-1}(\psi|_{\Delta}\cdot\omega_0^k).
\]
Hence, by Atkin--Lehner theory (Proposition~\ref{P:atkin-lehner}), in the $U_p$-slope sequence on $S_{k+2}^D(K^p\Iw_{p^M}; \psi^{-1})$, from the $(k q^{-1}p^{M}t+1)$st to the $(k+1)q^{-1}p^{M}t$th is given by
\[
k+1 - q^2p^{-M}\tilde\alpha_{qp^{M}t-1}(\psi|_{\Delta} \cdot \omega_0^k), \dots, k+1 - q^2p^{-M}\tilde\alpha_0(\psi|_{\Delta} \cdot \omega_0^k).
\]
This implies the relations
\begin{align*}
\tilde \alpha_{(k+1)q^{-1}p^{M}t -1-i}(\psi^{-1}|_{\Delta}\cdot\omega_0^{k}) = &q^{-2}p^{M}\big(k+1 - q^2p^{-M}\tilde\alpha_{i}(\psi|_\Delta \cdot \omega_0^k) \big)\\
=&(k+1)q^{-2}p^M-\tilde\alpha_{i}(\psi|_\Delta \cdot \omega_0^k)
\end{align*}
for $0\leq i\leq q^{-1}p^{M}t-1$.
Replacing $\psi$ by $\psi\omega_0^{-1}$ and $k$ by $k+1$, we get
\[
\tilde \alpha_{(k+2)q^{-1}p^{M}t-1-i}(\psi^{-1}|_{\Delta}\cdot\omega_0^{k+2})=(k+2)q^{-2}p^M-\tilde\alpha_{i}(\psi|_\Delta \cdot \omega_0^k). 
\] 
We thus deduce that
\begin{equation}
\label{E:arithmetic progression}
\tilde \alpha_{(k+2)q^{-1}p^{M}t-1-i}(\psi^{-1}|_{\Delta}\cdot\omega_0^{k+2})
=\tilde \alpha_{(k+1)q^{-1}p^{M}t-1- i}(\psi^{-1}|_{\Delta}\cdot\omega_0^{k})+q^{-2}p^M.
\end{equation}

We conclude the theorem by \eqref{E:arithmetic progression}. In fact, for any character $\omega$ of $\Delta$ and $j\in\ZZ_{\geq0}$, write  $j=(k+1)q^{-1}p^{M}t-1-i$ for some $k\in\ZZ_{\geq0}$ and $i\in[0, q^{-1}p^{M}t-1]$.  Choose $\psi$ so that 
$\psi|_{\Delta}\cdot\omega_0^{k}=\omega$. It then follows from (\ref{E:arithmetic progression}) that
\begin{equation}
\label{E:finer tilde alpha}
\tilde\alpha_{j+q^{-1}p^{M}t}(\omega \omega_0^2)=\tilde\alpha_{j}(\omega)+q^{-2}p^M. 
\end{equation}
In particular, since $\omega_0^{\varphi(q)} = \omega_0^{\frac{q(p-1)}{p}}=1$, we have
\[
\tilde\alpha_{j+(p-1)p^{M-1}t/2}(\omega)=\tilde\alpha_{j}(\omega)+\frac{\varphi(q)p^M}{2q^2}.
\]
Therefore, the sequence $\tilde \alpha_0(\omega),\tilde\alpha_1(\omega),\dots$ is the disjoint union of arithmetic progressions $\tilde \alpha_j(\omega),\tilde\alpha_{j+(p-1)p^{M-1}t/2}(\omega),\dots$ for
\[
0\leq j\leq(p-1)p^{M-1}t/2-1,
\] 
which have common difference $\frac{\varphi(q)p^M}{2q^2}$. 

\begin{remark}
The argument for the second part of the proof, namely, assuming the Claim to prove the slope ratios being the unions of arithmetic progressions, works equally well to the case of modular curves, as independently proved by Bergdall and Pollack \cite{bergdall-pollack}.
\end{remark}

\subsection{Proof of Corollary~\ref{C:classical}}
\label{S:proof of corollary}
(1) Specialize Theorem~\ref{T:main theorem} to the weight character $x^k\psi$
and note that $v(T_{x^k\psi}) = \frac{q}{(p-1)p^{m-1}}$ by the assumption on $m$.
If we use $\beta^\dagger_0(k, \psi), \beta^\dagger_1(k,\psi),\dots$ to denote the sequence of slopes of $U_p$-action on $S^{D, \dagger}_{x^k\psi}$,
then by (\ref{E:inequality1}) and (\ref{E:inequality2}) we have the following inequalities
\[
q^2p^{-m}
\lfloor n/qt\rfloor \leq
\beta^\dagger_n(k, \psi) \leq q^2p^{-m}\big( \lfloor n/qt \rfloor +1 \big) \quad \textrm{for all }n \geq 0.
\]
By the classicality result Proposition~\ref{P:classicality}, $\beta_i (k, \psi) = \beta^\dagger_i(k,\psi)$ for $i = 0, \dots, q^{-1}p^{m}(k+1)t-1$. This proves (1).

(2)\quash{First, pick any $p$-primitive character $\psi$ of $(\ZZ/p^{M}\ZZ)^\times$ such that $\psi|_{\Delta} = \omega$.}
Recall once again that $v(T_{\psi}) = \frac{q}{p^{M-1}(p-1)}$.
By specializing Theorem~\ref{T:main theorem 2} to the weight character $\psi$ that lifts $\omega$, and using the classicality result (Proposition~\ref{P:classicality}), we see that 
\[
\tilde \alpha_i(\omega) = p^Mq^{-2} \beta_i(\omega),
\]
for $i=0, \dots, q^{-1}p^{M}t-1$.
By \eqref{E:finer tilde alpha}, we have
\[
\tilde \alpha_{i+nq^{-1}p^{M}t}(\omega) = \tilde \alpha_i(\omega\omega_0^{-2n}) + np^Mq^{-2} =p^Mq^{-2}\beta_i(\omega\omega_0^{-2n}) + np^Mq^{-2}.
\]
Thus specializing Theorem~\ref{T:main theorem 2} to a general classical character $x^k\psi_m$ with $m \geq M$, we see the $U_p$-slopes on $S_{k+2}^{D, \dagger}(\psi_m)$ are exactly given by $ q^2p^{-m} \tilde \alpha_0(\psi_m|_\Delta  \omega_0^{k}), q^2p^{-m} \tilde \alpha_1(\psi_m|_\Delta  \omega_0^{k}), \dots$, or equivalently the set
\[
\bigcup_{n\geq 0}
 \big\{p^{M-m}(\beta_0(\psi_m|_\Delta  \omega_0^{k-2n})+n), \dots, p^{M-m}(\beta_{q^{-1}p^{M}t-1}(\psi_m|_\Delta  \omega_0^{k-2n})+n) \big\}.
\] 
By classicality result (Proposition~\ref{P:classicality}) again, we see that the slopes on $S^{D}_{k+2}(\psi_m)$ are those in the union with $n \in \{0, \dots, p^{m-M}(k+1)-1\}$. This concludes the proof of the corollary.

\section{Integral models of the space of overconvergent automorphic forms}
\label{Sec:integral model}

As mentioned before, the $U_p$-action on $S^{D}_\inte$ is unlikely to be compact.
This subtlety was carefully circumvented in the proof of our main theorem (e.g. the statement of Proposition~\ref{P:Up operator agrees}).
But we feel that it might be beneficial to introduce a variant construction, for which the $U_p$-action is compact.
We carry out this construction in this  section.

\begin{definition}
\label{D:compact operator}
Let $R$ be a complete noetherian ring, with ideal of definition $\gothm_R$.
Let $M$ be a topological $R$-module isomorphic to \[
\widehat \oplus_{i \in \ZZ_{\geq 0}} R e_i:=\varprojlim_{n} \Big(\bigoplus_{i \in \ZZ_{\geq 0}}(R/\gothm_R^n)e_i \Big),\] equipped with a continuous $R$-linear action of an operator $U$. We refer to $(e_i)_{i\in \ZZ_{\geq 0}}$ as an \emph{orthonormal basis}.
We say that the $U$-action on $M$ is \emph{compact} if the induced action on $M / \gothm^n_R M$ has finitely generated image for any $n \in \ZZ_{\geq 0}$.
This definition does not depend on the choice of the orthonormal basis of $M$.

When the $U$-action is compact, if $P$ denotes the infinite matrix for the $U$-action with respect to the basis $(e_i)_{i \in \NN}$, the \emph{characteristic power series} of the $U$-action:
\[
\Char(U; M): = \det (I_\infty - XP) = \lim_{n \to \infty} \det \big(I_\infty - X (P \textrm{ mod } \gothm_R^n) \big) \in R\llbracket X \rrbracket
\]
is well defined: note that its $r$-th coefficient is the trace of the action of $U$ on the $r$-th wedge product of $M$, which is well defined by first modulo $\gothm_R^n$ and then taking the limit. Moreover, it does not depend on the choice of the orthonormal basis.
\end{definition}

\begin{example}
\label{Ex:Up not compact}
We give an example where the operator is not compact; this example may be served as a toy model of the $U_p$-action on $S^{D}_\inte$. \quash{This is why we do not hope to prove the compactness of the $U_p$-action.}

Consider $M = \calC(\ZZ_p; \ZZ_p)$. The operator $U$ sends a continuous function $f(z)$ to
\[
h(z) = f(pz) + f(pz+1) + \cdots +f(pz+p-1).
\]
One can use Lemma~\ref{L:tilted degree estimate I} to control some of the entries of the infinite matrix of $U$ with respect to the Mahler basis.  But the $U$-action is not compact.
First note that the infinite matrix is going to be upper triangular because of the shape of the the operator $U$ and the trivial degree bound in Lemma~\ref{L:degree basic properties}(3).
Next, we look at the image of $\binom z {p^m}$ under $U$ for $m \geq 2$, which will appear on the $(p^m+1)$st column of the infinite matrix.
\[
U \bigg( \binom z{p^m} \bigg) = \binom{pz}{p^m}+ \binom{pz+1}{p^m} + \cdots +  \binom{pz+p-1}{p^m}.
\]
Note that evaluating the right hand side at $z = p^{m-1}$, we get $\sum_{i=0}^{p-1} \binom {p^m+i}i$, which is congruent to $p$ modulo $p^m$. On the other hand, it is clear that $U(\binom {z} {p^{m}})|_{z = i}$ is equal to zero for $i=0, 1, \dots, p^{m-1}-1$. It follows that the Mahler coefficient of $\binom z{p^{m-1}}$ is not divisible by $p^2$.  
This implies that the operator $U$ cannot be compact.
\end{example}

\begin{remark}
\label{R:Up almost compact}
\quash{As seen in the previous example, it is likely that the $U_p$-action on $S^{D}_\inte$ is not compact. }The non-compactness of $U_p$ may cause technical difficulties in applications.
Our fix to this problem is to introduce a subspace stable under the action of the monoid $\bfM_1$.
But we first explain that another apparently easier fix: developing a more general compact operator theory,  would not easily work.

As shown in Theorem~\ref{T:HP lower bound}, the $U_p$-action on $S^{D}_\inte$ satisfies the following property which is slightly weaker than being compact:
there exists an orthonormal basis such that, the associated infinite matrix,
modulo $\gothm_\Lambda^n$ for each $n$, is strictly upper triangular except for the first $d(n) \times d(n)$-minor for some $d(n) \in \NN$ depending on $n$.
It still makes sense to define characteristic power series for such an infinite matrix by taking the limit over the characteristic power series of its minors.
Unfortunately, this power series defined in this generality depends on the choice of the orthonormal basis (even if restricting to those bases satisfying the condition above).
Here is an example: consider $M = \widehat \bigoplus_{i \in \ZZ_{\geq 0}} \ZZ_p e_i$ equipped with the action of $U$, sending $e_0$ to $0$ and $e_i$ to $e_{i-1}$ for $i \in \NN$.
Then for this choice of orthonormal basis, the corresponding characteristic power series is just $1 \in \ZZ_p\llbracket X\rrbracket$, as the infinite matrix for $U$ is strict upper triangular.
Now if we consider another orthonormal basis of $M$:
\begin{align*}
e'_0 &= e_0 +pe_1 + p^2e_2 + p^3 e_3 + \cdots;
\\
e'_1 &= e_1 +pe_2 + p^2e_3 + p^3 e_4 + \cdots;
\\
e'_2 &= e_2 +pe_3 + p^2e_4 + p^3 e_5 + \cdots;
\\
&\cdots \quad \cdots
\end{align*}
Then we have $U(e'_0) = pe'_0$ and $U(e'_i) = e'_{i-1}$ for $i \in \NN$.  So the corresponding infinite matrix for $U$ is $p, 0, 0, 0, \dots$ on the main diagonal, all $1$ at the entries just above the diagonal, and $0$ everywhere else.
In particular, the corresponding power series is $1-pX \in \ZZ_p\llbracket X\rrbracket$.
So, in general, the characteristic power series for this type of operators might depend on the choice of the orthonormal basis.
\end{remark}

\subsection{Integral models of overconvergent automorphic forms}
\quash{It would be interesting to know if one can extend this construction to the entire weight space.
Recall that $\Lambda^{>1/p} = \ZZ_p\llbracket T, pT^{-1}\rrbracket \otimes_{\ZZ_p}\ZZ_p[\Delta]$.}
Let $[-]': \ZZ_p^\times \to (\Lambda^{>1/p})^\times$
denote the universal character of $\ZZ_p^\times$. Recall that \eqref{E:explicit induced representation} gives an isomorphism between $\Ind_{B(\ZZ_p)}^{\Iw_q}([-]')$
and $\calC(\ZZ_p; \Lambda^{>1/p})$; the latter admits an orthonormal basis (over $\Lambda^{>1/p}$) given by the functions $\big( \binom zn\big)_{n \in \ZZ_{\geq 0}}$.
We consider a closed subspace 
\begin{equation}
\label{E:modified subspace}
\Ind_{B(\ZZ_p)}^{\Iw_q}([-]')^{\textrm{mod}} = \widehat \bigoplus_{n \geq 0} T^n \Lambda^{>1/p} \cdot \tbinom zn.
\end{equation}
We claim that this subspace is stable under the action of the monoid $\bfM_1$.
Indeed, by Proposition~\ref{P:combined estimate}(2), for the action of $\delta_p \in \big(\begin{smallmatrix} a&b\\c&d \end{smallmatrix}\big) \in \bfM_1$ on the Mahler basis, the coefficient
$P_{m,n}(\delta_p)$ belongs to $
 \gothm_\Lambda^{\max\{m-n, 0\}}\Lambda^{>1/p} = T^{\max\{m-n, 0\}}\Lambda^{>1/p}$.
Then, with respect to the basis $\big(T^n \binom zn\big)_{n \in \ZZ_{\geq 0}}$, the $(m,n)$-entry of the infinite matrix has coefficients in
\[
T^{n-m}\cdot
T^{\max\{m-n, 0\}}\Lambda^{>1/p} = T^{\max\{0, n-m\}} \Lambda^{>1/p}.
\]
This concludes the proof of the claim.

Moreover, Proposition~\ref{P:combined estimate}(1) says that for $\delta_p \in \big(\begin{smallmatrix} a&b\\c&d \end{smallmatrix}\big) \in \big(\begin{smallmatrix} p\ZZ_p&\ZZ_p\\q\ZZ_p&\ZZ_p^\times\end{smallmatrix}\big)$, namely those $\delta_p$ appearing in the expression of $U_p$, the coefficient $P_{m,n}(\delta_p)$
belongs to $
\gothm_\Lambda
^{\max\{m-\lfloor n/p\rfloor, 0\}}$.
So under the new basis $\big(T^n \binom zn\big)_{n \in \ZZ_0}$, the $(m,n)$-entry of the infinite matrix has coefficients in
\[
T^{n-m}\cdot
\gothm_\Lambda^{\max\{m-\lfloor n/p\rfloor, 0\}}\Lambda^{>1/p} = T^{\max\{n-\lfloor n/p\rfloor, n-m\}} \Lambda^{>1/p} .
\]

Now, we define the space of \emph{integral $1$-overconvergent automorphic forms} to be
\[
S^{D, \dagger, 1}_{\textrm{int}}: = \Big\{ 
\varphi: D^\times \backslash (D \otimes \AAA_f)^\times / K^p \to \Ind_{B(\ZZ_p)}^{\Iw_q}([-]')^\textrm{mod}\; \Big|\;\varphi(xu_p) = \varphi(x)|\!|^{[-]}_{u_p}, \textrm{ for }u_p \in \Iw_q
\Big\}.
\]
It is a topological module over $\Lambda^{>1/p}$ isomorphic to  $\widehat \oplus_{i \in \ZZ_{\geq 0}} \Lambda^{>1/p}e_i$.
Viewing the $U_p$-action on $S^{D, \dagger, 1}_{\textrm{int}}$ with respect to the basis
\[
1_0, \dots, 1_{t-1}, T z_0, \dots, T z_{t-1}, T^2\tbinom  {z_0}2, \dots, T^2\tbinom{z_{t-1}}{2}, T^3\tbinom{z_0}3, \dots,
\]
the corresponding infinite matrix has its entry of its $n$th column in 
\[
T^{\lfloor n/t\rfloor -\lfloor n/pt\rfloor}\Lambda^{>1/p}.
\]
In particular, the action of $U_p$ is compact, and the characteristic power series $\Char(U_p; S^{D, \dagger, 1}_{\textrm{int}})$ agrees with the ones in Proposition~\ref{P:Up operator agrees}.\footnote{This construction was recently generalized by Johansson and Newton to general overconvergent cohomology \cite{johansson-newton}.}

\begin{remark}
\quash{Instead of taking the modified induced representation as in \eqref{E:modified subspace}, we could also take the subspace $\widehat \bigoplus_{n \geq 0} T^{\lfloor n/p\rfloor} \Lambda^{>1/p} \cdot \binom zn$ instead; or more generally using some appropriate sequence of numbers $a_0, a_1, \dots$ between $(n)_{n \geq 0}$ and $(\lfloor n/p\rfloor)_{n \geq 0}$.}
Similar constructions will give integral models of the space of $r$-overconvergent automorphic forms (with weights in $\Lambda^{>1/p}$) for $r>0$, on which the $U_p$-action is compact.
\end{remark}

\begin{remark}\label{R:final}
The above construction may be regarded as the \'etale realization of integral models of overconvergent automorphic forms. We are curious about the possibility of comparing our construction with the integral models constructed in \cite{AIP} by understanding the comparison theorem on this level.
\end{remark}

\end{document}